\newcommand{\Rmnum}[1]{\expandafter\@slowromancap\romannumeral #1@}
\def\bx{\mbox{\boldmath $x$}}
\def\by{\mbox{\boldmath $y$}}
\newtheorem{theorem}{Theorem}[section]
\theoremstyle{definition}
\newtheorem{assumption}{Assumption}[section]
\newtheorem{lemma}{Lemma}[section]
\newtheorem{proposition}{Proposition}[section]
\theoremstyle{remark}
\newtheorem{remark}{Remark}
\numberwithin{equation}{section}
\begin{document}

\title[Efficient FEMs for semiclassical NLSE with random potentials]{Efficient finite element methods for semiclassical nonlinear Schr\"odinger equations with random potentials}

\author{Panchi Li}
\address{Department of Mathematics, The University of Hong Kong, Hong Kong, P.R. China.}
\email{lipch@hku.hk}

\author{Zhiwen Zhang$^{\ast}$}\thanks{*Corresponding author}
\address{Department of Mathematics, The University of Hong Kong, Hong Kong, P.R. China.  Materials Innovation Institute for Life Sciences and Energy (MILES), HKU-SIRI, Shenzhen, P.R. China. }
\email{zhangzw@hku.hk}

\graphicspath{{figures/}}

\subjclass[2010]{35Q55, 65M60, 81Q05, 47H40}

\date{\today}

\dedicatory{}
\keywords{Semiclassical nonlinear Schr\"odinger equation, finite element method, multiscale finite element method, random potentials, time-splitting methods}
\begin{abstract}
  In this paper, we propose two time-splitting finite element methods to solve the semiclassical nonlinear Schr\"odinger equation (NLSE) with random potentials. We then introduce the multiscale finite element method (MsFEM) to reduce the degrees of freedom in the physical space. We construct multiscale basis functions by solving optimization problems and rigorously analyze two time-splitting MsFEMs for the semiclassical NLSE with random potentials. We provide the $L^2$ error estimate of the proposed methods and show that they achieve second-order accuracy in both spatial and temporal spaces and an almost first-order convergence rate in the random space. Additionally, we present a multiscale reduced basis method to reduce the computational cost of constructing basis functions for solving random NLSEs. Finally, we carry out several 1D and 2D numerical examples to validate the convergence of our methods and investigate wave propagation behaviors in the NLSE with random potentials.
\end{abstract}
\maketitle

\section{Introduction}
The nonlinear Schr\"odinger equation~(NLSE) is a prototypical dispersive nonlinear equation that has been extensively used to study the Bose-Einstein condensation, laser beam propagation in nonlinear optics, particle physics, semi-conductors, superfluids, etc. In the presence of random potentials, the interaction of nonlinearity and random effect poses challenges to understanding complex phenomena, such as localization and delocalization~\cite{PhysRevA.96.043630,PhysRevE.79.026205,PhysRevE.95.042142,PhysRevLett.70.1787} and the soliton propagation~\cite{fishman2012nonlinear,PhysRevE.81.017601,PhysRevLett.100.094101}. Owing to the inherent challenges in obtaining analytical solutions and the limited experimental observations in nonlinear random media, numerical simulations play a crucial role in studying and understanding the nonlinear dynamics in such regimes, particularly for long-time behaviors in high-dimensional physical space. This necessitates efficient numerical methods for solving the NLSEs with random potentials.

In the past decades, many numerical methods have been proposed to solve the NLSEs with deterministic potentials, and recent comparisons can be found in~\cite{ANTOINE20132621,WeizhuBao2013KRMs,Henning2019KRMs}. For the time-dependent NLSE, the implicit Crank-Nicolson~(CN) schemes were extensively employed to conserve the mass and energy of the system. The CN method is known for its lower efficiency in handling nonlinearity because of the requirements of iteration methods and time step conditions~\cite{Akrivis1991,article1988,sanz1984methods}. To enhance computational efficiency, several promising approaches, including linearized implicit methods~\cite{Wang2014,refId02001}, relaxation methods~\cite{Besse2004relaxation,10.1093/imanum/drz067} and time-splitting methods~\cite{doi:10.1137/S1064827501393253,Besse2003SIAM,doi:10.1137/060674636}, have been proposed. Among these, time-splitting methods exhibit outstanding performance in terms of efficiency since linear equations with constant coefficients are solved at each time step. To reach optimal accuracy, time-splitting type schemes
require enough smoothness on both the potential and the initial condition. For instance, Strang splitting methods require the initial condition to possess $H^4$ regularity~\cite{Besse2003SIAM}. The low-regularity time-integrator methods~\cite{doi:10.1137/18M1198375,ostermann2022second,Zhao2021} are proposed to alleviate such constraint. Nevertheless, the low-regularity time-integrator methods rely on the Fourier discretization in space with a periodical setup, and their integration with finite difference methods~(FDM) and finite element methods~(FEM) has not been established.

The spatial Fourier discretization allows the spectral methods to have exponential convergence for smooth potentials and competitive efficiency in simulations. However, in the case of non-smooth potentials, spectral methods may lose their optimal convergence rate, and the FDM or FEM is then recommended. In this paper, we invest to develop efficient numerical methods based on the FEM. Over the past several decades, to develop efficient FEM methods for partial differential equations, intense research efforts in dimensionality reduction methods by constructing the multiscale reduced basis functions, known as the multiscale finite method (MsFEM), have been invested (see, e.g., \cite{Altmann_Henning_Peterseim_2021,CHUNG201454,weinan2003heterognous,EFENDIEV2013116,efendiev2009multiscale,
doi:10.1137/130933198,HOU1997169,doi:10.1137/100813968}). Incorporating the local microstructures of the differential operator into the basis functions, MsFEMs capture the large-scale components of the multiscale solution on a coarse mesh without the need to resolve all the small-scale features on a fine mesh.

Recently, the localized orthogonal decomposition (LOD) method \cite{Altmann_Henning_Peterseim_2021} has been proposed to approximate the minimizes of the energy \cite{henning2014two,doi:10.1137/22M1516300} and simulate the time-dependent dynamics \cite{doding2022efficient} for the NLSE with deterministic potential, which achieves a superconvergence rate in space. With the random potential further being considered, the time-splitting spectral discretization with the Monte Carlo~(MC) sampling~\cite{Zhao2021} and quasi-Monte Carlo~(qMC) sampling~\cite{wu2023error} have been employed for the 1D NLSE. Considering the limitation of the spectral methods, and developing efficient numerical methods in the framework of the FEM, here we combine the time-splitting temporal discretization and MsFEM to solve the NLSE with random potentials.

We generate the multiscale basis functions by solving a set of equality-constrained quadratic programs. This idea was motivated by the MsFEM for elliptic problems with random coefficients~\cite{doi:10.1137/18M1205844,Hou2017} and the linear Schr\"odinger equation with multiscale and random potentials~\cite{doi:10.1137/19M127389X}. We find that the localized orthogonal normalization constraints in optimization problems imply a mesh-dependent scale in the basis functions. This scale in the linear algebraic equation is eliminated naturally. However, when the cubic nonlinear term appears, the balance of such scale in the equation is broken, which produces an indispensable scale in the numerical solution. To overcome this problem, we add a mesh-dependent factor to the orthogonality constraints to eliminate this scale of basis functions. We use these new basis functions to discrete the deterministic NLSE that reduces the degrees of freedom~(dofs) required for FEM without sacrificing accuracy.
For the time-marching, we present two Strang splitting methods. One solves the linear Schr\"odinger equation using the eigendecomposition method~\cite{doi:10.1137/19M127389X} and handles the cubic ordinary differential equation at each time step. This method maintains the convergence rate even for the discontinuous potential (see the numerical demonstration shown in \Cref{fig:convergence-MsFEM-discontinuous}). The other is the time-splitting CN method. Meanwhile, we parameterize the random potential using the Karhunen-Lo\`{e}ve~(KL) expansion method. We use the qMC method to generate random samples. It is demonstrated that the proposed approaches reach the second-order convergence rate in both time and space, and achieve almost a first-order convergence rate with respect to the sampling number in the random space. Theoretically, we provide a convergence analysis of the time-splitting FEM~(TS-FEM) for the deterministic NLES in \Cref{lem:TS-MsFEM-convergence}. Furthermore, we extend this analysis to estimate the time-splitting MsFEM~(TS-MsFEM) for the NLSEs with random potentials as \Cref{thm:L2-estimate-TS-MsFEM-qMC}.
We verify several theoretical aspects through numerical experiments. Besides, we propose a multiscale reduced basis method to decrease the computational cost of the construction of multiscale basis functions for random potentials, which is detailed in Appendix~\ref{sec:reduced-MsFEM}. Using the proposed numerical methods, we investigate wave propagation in the NLSE with parameterized random potentials in both 1D and 2D physical spaces. We observe the localized phenomena of mass density for the linear cases, whereas the NLSE with strong nonlinearity exhibits significant delocalization.

The rest of the paper is organized as follows. In Section 2, we describe fundamental model problems. In Section 3, we present the FEM and the MsFEM with time-splitting methods for the deterministic NLSEs. Then, the analysis results are presented in Section 4. Numerical experiments, including 1D and 2D examples, are conducted in Section 5. Finally, conclusions are drawn in Section 6.

\section{The semiclassical NLSE with random potentials}
We consider the following model problem
\begin{equation}
  \left\{\begin{aligned}
    i\epsilon\partial_t\psi^{\epsilon} &= -\frac{\epsilon^2}{2}\Delta\psi^{\epsilon} + v(\bx,{\omega})\psi^{\epsilon} + \lambda|\psi^{\epsilon}|^2\psi^{\epsilon}, \quad \bx\in\mathcal{D}, \quad {\omega}\in\Omega, \quad t\in(0, T],\\
    \psi^{\epsilon}|_{t=0} &= \psi_{\mathrm{in}}(\bx),
  \end{aligned}\right.
  \label{equ:NLS_equ}
\end{equation}
where $0 < \epsilon \ll 1$ is an effective Planck constant, $\mathcal{D} \subset \mathds{R}^d (d = 1,2,3)$ is a bounded domain, ${\omega} \in \Omega$ is the random sample with $\Omega$ being the random space, $T$ is the terminal time, $\psi_{\mathrm{in}}(\bx)$ denotes the initial state, $v(\bx,{\omega})$ is a given random potential, and $\lambda~(\geq 0)$ is the nonlinearity coefficient. The periodic boundary is considered in this work. Physically, $|\psi^{\epsilon}|^2$ denotes the mass density and the system's total mass $m_T = \int_{\mathcal{D}}|\psi_{\mathrm{in}}|^2\mathrm{d}\bx$ is conserved by~\eqref{equ:NLS_equ}.
Note that the wave function $\psi^{\epsilon}: [0, T]\times\mathcal{D}\times\Omega\rightarrow \mathds{C}$, and the function space $H^1_P(\mathcal{D}) = H^1_P(\mathcal{D}, \mathds{C})$, in which the functions are periodic over domain $\mathcal{D}$. The inner product is defined as $(v,w) = \int_{\mathcal{D}}v\overline{w}\mathrm{d}\bx$ with $\overline{w}$ denoting the complex-conjugate of $w$, and the $L^2$ norm is $\|w\|^2 = \||w|\|^2 = (w,w)$.

We denote the Hamiltonian operator $\mathcal{H}$ of the NLSE
\begin{equation}
  \mathcal{H}(\cdot) = -\frac{\epsilon^2}{2}\Delta(\cdot)  + v(\cdot) + \lambda|\cdot|^2(\cdot).
\end{equation}
Since the Hamiltonian operator is not explicitly dependent on time and the commutator $[\mathcal{H}, \mathcal{H}] = 0$, the energy of the system,
\begin{equation}
  E(t) = (\mathcal{H}\psi^{\epsilon}, \psi^{\epsilon}) = \frac{\epsilon^2}{2}\|\nabla\psi^{\epsilon}\|^2 + (v(\bx, {\omega}), |\psi^{\epsilon}|^2) + \frac{\lambda}{2}\|\psi^{\epsilon}\|_{L^4}^4,
\end{equation}
remains unchanged as time evolves, i.e., $\mathrm{d}_tE(t) = 0$ for all $t > 0$.

\begin{assumption}
  We assume that the potential $v(\bx, \omega)$ is bounded in $L^{\infty}(\Omega; H^{s})$ with $0 \leq s \leq 2$. More precisely, the bound of $\|v(\bx, \omega)\|_{\infty}$ satisfies
  \begin{equation}
    \|v(\bx, \omega)\|_{\infty} \lesssim \frac{\epsilon^2}{H^2},
  \end{equation}
  where $\lesssim$ means bounded by a constant and $H$, which will be defined later, is the coarse mesh size of the MsFEM.
  \label{assump:potential}
\end{assumption}

We first consider the deterministic potential, i.e., $v(\bx, \omega) = v(\bx)$. Assume that there exists a finite time $T$ such that $\psi^{\epsilon}\in L^{\infty}([0, T]; H^{4}) \cap L^{1}([0, T]; H^2)$ and by Sobolev embedding theorem, we have $\|\psi^{\epsilon}\|_{\infty} \leq C\|\psi^{\epsilon}\|_{H^2}$ for $d \leq 3$.
In the sequel, we will use a uniform constant $C$ to denote all the controllable constants that are independent of $\epsilon$ for simplicity of notation.

\begin{lemma}
  Let $\psi^{\epsilon}$ be the solution of~\eqref{equ:NLS_equ}, and assume $\psi^{\epsilon}\in L^{\infty}([0, T]; H^{4}) \cap L^{1}([0, T]; H^2)$. If $\partial_t\psi^{\epsilon}(t)\in H^s$ with $s = 0,1,2$ for all $t\in [0, T]$, there exists a constant $C_{\lambda,\epsilon}$ such that
  \begin{equation}
    \|\partial_t\psi^{\epsilon}\|_{H^s} \leq C_{\lambda,\epsilon},
  \end{equation}
  where $C_{\lambda,\epsilon}$ mainly depends on $\epsilon$ and $\lambda$.
  In particular, for $d = 3$ and $s = 1, 2$, we have a compact formulate
  \begin{equation*}
    \|\partial_t\nabla^s\psi^{\epsilon}\| \leq
    \left( \frac{\|\nabla v\|_{\infty} + C\lambda \|\nabla^{s+1}\psi^{\epsilon}\|}{\epsilon} \right) \|\partial_t\nabla^{s-1}\psi^{\epsilon}\| \exp\left( \frac{C\lambda T (\|\nabla
    ^2\psi^{\epsilon}\| + \|\psi^{\epsilon}\|_{\infty}^2)}{\epsilon} \right),
  \end{equation*}
  where
  \begin{equation}
    \|\partial_{t}\psi^{\epsilon}\| \leq \frac{C}{\epsilon} \exp\left( \frac{2\lambda T\| \psi^{\epsilon} \|_{\infty}^2}{\epsilon} \right).
  \end{equation}
  \label{lem:regularity-NLSE}
\end{lemma}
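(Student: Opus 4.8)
The plan is to run an energy estimate on the time-differentiated equation and close it with Gronwall's inequality. Writing $\phi = \partial_t\psi^{\epsilon}$ and differentiating~\eqref{equ:NLS_equ} in $t$ produces a linear Schr\"odinger-type equation for $\phi$,
\begin{equation*}
  i\epsilon\partial_t\phi = -\frac{\epsilon^2}{2}\Delta\phi + v\phi + \lambda\left(2|\psi^{\epsilon}|^2\phi + (\psi^{\epsilon})^2\overline{\phi}\right),
\end{equation*}
where the cubic term has been differentiated via $\partial_t(|\psi^{\epsilon}|^2\psi^{\epsilon}) = 2|\psi^{\epsilon}|^2\phi + (\psi^{\epsilon})^2\overline{\phi}$. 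The point is that $\phi$ satisfies a \emph{linear} problem whose zeroth-order coefficients are controlled by $\psi^{\epsilon}$, so the highest-order operator $-\tfrac{\epsilon^2}{2}\Delta$ never needs to be bounded directly.

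For the base case $s=0$ I would test this equation against $\phi$ and extract the imaginary part. Because $v$ is real and the domain is periodic, both $(\Delta\phi,\phi) = -\|\nabla\phi\|^2$ and $(v\phi,\phi)$ are real and vanish, and $(2|\psi^{\epsilon}|^2\phi,\phi)$ is likewise real; only $\mathrm{Im}\,((\psi^{\epsilon})^2\overline{\phi},\phi)$ survives, bounded by $\|\psi^{\epsilon}\|_{\infty}^2\|\phi\|^2$. This gives $\tfrac{\epsilon}{2}\tfrac{d}{dt}\|\phi\|^2 \le \lambda\|\psi^{\epsilon}\|_{\infty}^2\|\phi\|^2$, and Gronwall's inequality then yields the stated exponential factor. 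The prefactor $C/\epsilon$ comes from the initial datum: evaluating the equation at $t=0$ gives $\|\phi(0)\| \le \epsilon^{-1}\big(\tfrac{\epsilon^2}{2}\|\Delta\psi_{\mathrm{in}}\| + \|v\|_{\infty}\|\psi_{\mathrm{in}}\| + \lambda\|\psi_{\mathrm{in}}\|_{\infty}^2\|\psi_{\mathrm{in}}\|\big)$, which is $O(\epsilon^{-1})$ once \Cref{assump:potential} and the $H^4$ regularity of $\psi^{\epsilon}$ are invoked.

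For $s=1,2$ I would differentiate the $\phi$-equation $s$ times in space, test against $\nabla^s\phi$, and again take the imaginary part. The terms that are top-order in $\phi$, namely $v\nabla^s\phi$, $|\psi^{\epsilon}|^2\nabla^s\phi$ and $(\psi^{\epsilon})^2\nabla^s\overline{\phi}$, are either real (and drop out) or bounded by $\lambda\|\psi^{\epsilon}\|_{\infty}^2\|\nabla^s\phi\|^2$, feeding the Gronwall exponent, which also collects the $\|\nabla^2\psi^{\epsilon}\|$ contribution arising from the mixed Leibniz products. Every remaining term carries at least one derivative onto $v$ or $\psi^{\epsilon}$ and is linear in $\nabla^{s-1}\phi$, producing a source of the form $\big(\|\nabla v\|_{\infty} + C\lambda\|\nabla^{s+1}\psi^{\epsilon}\|\big)\|\nabla^{s-1}\phi\|$. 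Dividing out one factor of $\|\nabla^s\phi\|$ and applying Gronwall gives precisely the recursive estimate in the statement; chaining it down to the base case then delivers $\|\partial_t\psi^{\epsilon}\|_{H^s}\le C_{\lambda,\epsilon}$.

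I expect the main obstacle to be the bookkeeping in this last step: organizing the Leibniz expansion of $\nabla^s\big(|\psi^{\epsilon}|^2\psi^{\epsilon}\big)$ so that each product splits cleanly into a top-order-in-$\phi$ piece (absorbed by Gronwall) and a source piece, and then using the Sobolev embedding $H^2\hookrightarrow L^{\infty}$ (valid for $d\le 3$) to convert the $L^{\infty}$ norms of the intermediate factors of $\psi^{\epsilon}$ into the Sobolev norms that appear in the stated coefficient. Care is also needed to keep every generic constant independent of $\epsilon$, so that the full $\epsilon$-dependence is exposed in the explicit prefactors and exponents rather than hidden in $C$.
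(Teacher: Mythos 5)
Your proposal is correct and follows essentially the same route as the paper's proof in Appendix~\ref{appsec:proof-regularity-NLSE}: differentiate the equation in time, test $\nabla^s\partial_t\psi^{\epsilon}$ against itself, keep only the imaginary part so the $-\tfrac{\epsilon^2}{2}\Delta$ and real potential terms drop, control the trilinear Leibniz terms by H\"older/Gagliardo--Nirenberg interpolation, and close with Gronwall, with the $C/\epsilon$ prefactor coming from evaluating the equation at $t=0$. The only substantive difference is that the paper first derives the explicit scalings $\|\nabla^s\psi^{\epsilon}\|\leq C\epsilon^{-s}$ from conservation of $(\mathcal{H}^k\psi^{\epsilon},\psi^{\epsilon})$ before running the Gronwall argument, whereas you lean on the assumed $L^{\infty}([0,T];H^4)$ bound; this does not affect the validity of the stated inequalities but is what lets the paper make the $\epsilon$-dependence of $C_{\lambda,\epsilon}$ fully explicit.
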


The proof is detailed in Appendix \ref{appsec:proof-regularity-NLSE}. Note that for $\lambda = 0$, the result of this lemma degenerates to the estimate of the linear Schr\"odinger equation as in~\cite{BAO2002487,Wu2022}.

Next, we assume that $v(\bx, \omega)$ is a second-order random field, i.e., $\mathds{E}[|v(\bx, \omega)|^2] < \infty$, with a mean value $\mathds{E}[v(\bx, \omega)] = v(\bx)$ and a covariance kernel denoted by $C(\bx,\by)$. In this study, we adopt the covariance kernel
\begin{equation}
  C(\bx, \by) = \sigma^2\exp\left(-\sum_{i=1}^d\frac{|x_i-y_j|^2}{2l_i^2}\right),
\end{equation}
where $\sigma$ is a constant and $l_i$ denotes the correlation lengths in each dimension. Moreover, we assume that the random potential is almost surely bounded. Using the KL expansion method~\cite{karhunen1947lineare,loeve1978probability}, random potentials take the form
\begin{equation}
  v(\bx, {\omega}) = \bar{v}(\bx) + \sum_{j=1}^{\infty}\sqrt{\lambda_j}\xi_j(\omega)v_j(\bx),
\end{equation}
where $\xi_i(\omega)$ represents mean-zero and uncorrelated random variables, and $\{\lambda_i, v_i(\bx)\}$ are the eigenpairs of the covariance kernel $C(\bx, \by)$. The eigenvalues are sorted in descending order and the decay rate depends on the regularity of the covariance kernel~\cite{SCHWAB2006100}. Hence the random potential can be parameterized by the truncated form
\begin{equation}
  v_m(\bx, {\omega}) = \bar{v}(\bx) + \sum_{j=1}^{m}\sqrt{\lambda_j}\xi_j(\omega)v_j(\bx).
  \label{equ:trcunated-randon-potential}
\end{equation}

Once the parameterized form of the random potential is defined, the corresponding wave function $\psi^{\epsilon}_m$ satisfies
\begin{equation}
  \left\{\begin{aligned}
    &i\epsilon\partial_t\psi^{\epsilon}_m= -\frac{\epsilon^2}{2}\Delta\psi^{\epsilon}_m + v_m(\bx, {\omega})\psi^{\epsilon}_m + \lambda|\psi^{\epsilon}_m|^2\psi^{\epsilon}_m, \quad \bx\in\mathcal{D}, \omega\in\Omega, t\in(0, T],\\
    &\psi^{\epsilon}_m(t=0)= \psi_{\mathrm{in}}.
  \end{aligned}\right.
  \label{equ:NLS_equ_parameterized}
\end{equation}
The error $|v_m(\bx, {\omega}) - v(\bx, {\omega})|$ depends on the regularity of eigenfunctions and the decay rate of eigenvalues. We make the following assumption for the parameterized random potentials.
\begin{assumption}
  \begin{enumerate}
    \item In the KL expansion~\eqref{equ:trcunated-randon-potential}, assume that there exist constants $C > 0$ and $\Theta > 1$ such that $\lambda_j \leq Cj^{-\Theta}$ for all $j \geq 1$.
    \item The eigenfunctions $v_j(\bx)$ are continuous and there exist constants $C > 0$ and $0 \leq \eta \leq \frac{\Theta -1}{2\Theta}$ such that $\|v_j\|_{H^2} \leq C\lambda_j^{-\eta}$ for all $j \ge 1$.
    \item Assume that the parameterized potential $v_m$ satisfies
  \begin{equation*}
    \|v - v_m\|_{\infty} \leq Cm^{-\chi}, \quad \sum_{j=1}^{\infty}(\sqrt{\lambda_j}\|v_j\|_{H^2})^p < \infty,
  \end{equation*}
  for some positive constants $C$ and $\chi$, and $p\in (0,1]$.
  \end{enumerate}
  \label{assump:random-potential-assumption}
\end{assumption}

In~\cite{wu2023error}, the authors provide the $L^{\infty}([0, T], H^1)$ error between wave functions to~\eqref{equ:NLS_equ} and \eqref{equ:NLS_equ_parameterized} for the 1D case. Here we get a similar result for the $L^2$ error between the wave functions for $d \leq 3$.
\begin{lemma}
  The error between wave functions to \eqref{equ:NLS_equ} and \eqref{equ:NLS_equ_parameterized} satisfies
  \begin{equation}
    \|\psi^{\epsilon}_m - \psi^{\epsilon}\| \leq \frac{2\|v_m-v\|_{\infty}}{\epsilon}\exp\left(\frac{2T\lambda}{\epsilon}
  \|\psi^{\epsilon}\|_{\infty}\|\psi^{\epsilon}_m\|_{\infty}\right).
  \end{equation}
\end{lemma}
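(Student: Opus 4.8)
The plan is to run a standard $L^2$ energy estimate on the error $e := \psi^{\epsilon}_m - \psi^{\epsilon}$ and close it with Grönwall's inequality. First I would subtract \eqref{equ:NLS_equ} from \eqref{equ:NLS_equ_parameterized} (both posed with the same initial data $\psi_{\mathrm{in}}$, so $e(0)=0$) to obtain the error equation
\begin{equation*}
  i\epsilon\,\partial_t e = -\tfrac{\epsilon^2}{2}\Delta e + \big(v_m\psi^{\epsilon}_m - v\psi^{\epsilon}\big) + \lambda\big(|\psi^{\epsilon}_m|^2\psi^{\epsilon}_m - |\psi^{\epsilon}|^2\psi^{\epsilon}\big).
\end{equation*}
Taking the $L^2$ inner product with $e$ and extracting the imaginary part, the dispersive term contributes $-\tfrac{\epsilon^2}{2}(\Delta e,e)=\tfrac{\epsilon^2}{2}\|\nabla e\|^2$, which is real and hence drops under the periodic boundary conditions, while the left-hand side yields $\mathrm{Im}\,[\,i\epsilon(\partial_t e,e)\,]=\tfrac{\epsilon}{2}\tfrac{d}{dt}\|e\|^2$. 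This reduces the problem to controlling the imaginary parts of the potential and nonlinear mismatches.

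For the potential term I would write $v_m\psi^{\epsilon}_m - v\psi^{\epsilon} = v_m e + (v_m-v)\psi^{\epsilon}$; since $v_m$ is real, $(v_m e,e)=\int_{\mathcal{D}} v_m|e|^2$ is real and drops, leaving $|\mathrm{Im}\,((v_m-v)\psi^{\epsilon},e)|\le \|v_m-v\|_\infty\,\|\psi^{\epsilon}\|\,\|e\|$. For the cubic term the key is the algebraic splitting $|\psi^{\epsilon}_m|^2\psi^{\epsilon}_m - |\psi^{\epsilon}|^2\psi^{\epsilon} = (|\psi^{\epsilon}_m|^2+|\psi^{\epsilon}|^2)\,e + \psi^{\epsilon}_m\psi^{\epsilon}\,\bar e$: the first summand pairs with $e$ to give the real integral $\int(|\psi^{\epsilon}_m|^2+|\psi^{\epsilon}|^2)|e|^2$ that again drops, while the second is bounded by $C\|\psi^{\epsilon}_m\|_\infty\|\psi^{\epsilon}\|_\infty\|e\|^2$. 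The $L^\infty$ norms here are finite thanks to the $H^4$ regularity and the Sobolev embedding $H^2\hookrightarrow L^\infty$ for $d\le 3$ assumed in the excerpt, and $\|\psi^{\epsilon}\|=\|\psi^{\epsilon}_m\|=\|\psi_{\mathrm{in}}\|$ by mass conservation. Collecting the estimates gives
\begin{equation*}
  \tfrac{\epsilon}{2}\tfrac{d}{dt}\|e\|^2 \le \|v_m-v\|_\infty\,\|\psi^{\epsilon}\|\,\|e\| + C\lambda\|\psi^{\epsilon}_m\|_\infty\|\psi^{\epsilon}\|_\infty\|e\|^2.
\end{equation*}

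Finally, dividing by $\|e\|$ (or regularizing by $\|e\|+\delta$ and letting $\delta\to0$ to avoid the vanishing-denominator issue) turns this into the linear differential inequality $\tfrac{d}{dt}\|e\| \le \tfrac{\|v_m-v\|_\infty}{\epsilon} + \tfrac{C\lambda\|\psi^{\epsilon}_m\|_\infty\|\psi^{\epsilon}\|_\infty}{\epsilon}\|e\|$, whose Grönwall integration from $e(0)=0$ produces a bound of exactly the stated shape, with prefactor proportional to $\tfrac{\|v_m-v\|_\infty}{\epsilon}$ and exponential $\exp\!\big(\tfrac{T\lambda}{\epsilon}\|\psi^{\epsilon}\|_\infty\|\psi^{\epsilon}_m\|_\infty\big)$; the explicit factors of $2$ in the statement come from the (slightly lossy) constants in the cubic estimate and the Grönwall bookkeeping. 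I expect the cubic nonlinearity to be the main obstacle: one must choose the algebraic splitting so that the leading, purely real contributions cancel under $\mathrm{Im}$ and only a genuinely $\|e\|^2$-controllable remainder survives — a naive bound of $\big\|\,|\psi^{\epsilon}_m|^2\psi^{\epsilon}_m - |\psi^{\epsilon}|^2\psi^{\epsilon}\big\|$ would discard this cancellation and fail to close the estimate. The remaining steps (dispersive cancellation, potential splitting, and the Grönwall argument) are routine.
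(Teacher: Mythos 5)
Your proposal is correct and follows essentially the same route as the paper's proof: the same error equation, the same decomposition of the cubic difference into the real terms $(|\psi^{\epsilon}_m|^2+|\psi^{\epsilon}|^2)\,e$ (which drop under the imaginary part) plus the cross term $\psi^{\epsilon}\psi^{\epsilon}_m\bar e$, the same treatment of the potential mismatch, and the same Grönwall closure. The only (cosmetic) difference is that the paper works directly with $\mathrm{d}_t\|\delta\psi\|^2$ and divides through afterwards rather than regularizing the denominator, and its final displayed bound actually carries an extra factor of $T$ in the prefactor that the lemma statement omits — a discrepancy in the paper, not in your argument.
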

\begin{proof}
  Define $\delta\psi = \psi^{\epsilon}_m - \psi^{\epsilon}$ and it satisfies
  \begin{equation}
    i\epsilon\partial_t\delta\psi= -\frac{\epsilon^2}{2}\Delta\delta\psi + v_m\delta\psi + (v_m-v)\psi^{\epsilon} + \lambda(|\psi^{\epsilon}_m|^2\psi^{\epsilon}_m - |\psi^{\epsilon}|^2\psi^{\epsilon})
  \label{equ:difference-equation-random-deter}
\end{equation}
with the initial condition $\delta\psi(t=0) = 0$.
For the nonlinear term, we have
$$
|\psi^{\epsilon}_m|^2\psi^{\epsilon}_m - |\psi^{\epsilon}|^2\psi^{\epsilon} = |\psi^{\epsilon}_m|^2\delta\psi + \psi^{\epsilon}\psi^{\epsilon}_m\delta\bar\psi + |\psi^{\epsilon}|^2\delta\psi.
$$
Taking the inner product of \eqref{equ:difference-equation-random-deter} with $\delta\psi$ yields
\begin{equation*}
  i\epsilon\mathrm{d}_t\|\delta\psi\|^2 = \big((v_m-v)\psi^{\epsilon},\delta\psi \big) - \big((v_m-v)\bar\psi^{\epsilon},\delta\bar\psi \big) + \lambda\big((\psi^{\epsilon}\delta\bar\psi, \bar\psi^{\epsilon}_m\delta\psi) - (\bar\psi^{\epsilon}\delta\psi, \psi^{\epsilon}_m\delta\bar\psi)\big).
\end{equation*}
We further get
\begin{align*}
  \mathrm{d}_t\|\delta\psi\|^2 &\leq \frac{2\|v_m-v\|_{\infty}}{\epsilon}\int_{\mathcal{D}}|\psi^{\epsilon}||\delta\psi|\mathrm{d}\bx + \frac{2\lambda}{\epsilon}\int_{\mathcal{D}}|\psi^{\epsilon}\delta\psi| |\psi^{\epsilon}_m\delta\psi|\mathrm{d}\bx \\
  &\leq \frac{2\|v_m-v\|_{\infty}}{\epsilon}\|\psi^{\epsilon}\|\|\delta\psi\| + \frac{2\lambda}{\epsilon}\|\psi^{\epsilon}\|_{\infty}\|\psi^{\epsilon}_m\|_{\infty}\|\delta\psi\|^2.
\end{align*}
Owing to the $L^{\infty}([0, T]\times\Omega; H^s)$ bound of both $\psi^{\epsilon}$ and $\psi^{\epsilon}_m$, an application of Gronwall inequality yields
\begin{equation*}
  \|\delta\psi\| \leq \frac{2T\|v_m-v\|_{\infty}}{\epsilon}\exp\left(\frac{2T\lambda}{\epsilon}
  \|\psi^{\epsilon}\|_{\infty}\|\psi^{\epsilon}_m\|_{\infty}\right).
\end{equation*}
\end{proof}
Owing to the assumption $\|v_m-v\|_{\infty} \leq Cm^{-\chi}$, this lemma implies that $\psi^{\epsilon}_m \rightarrow \psi^{\epsilon}$ as $m \rightarrow \infty$.

\section{Numerical methods}
Consider the regular mesh $\mathcal{T}_h$ of $\mathcal{D}$. The standard $P_1$ finite element space on the mesh $\mathcal{T}_h$ is given by
$P_1(\mathcal{T}_h) = \{v\in L^2(\bar{\mathcal{D}})| \text{ for all } K\in\mathcal{T}_h, v|_K \text{ is a polynomial of total degree} \leq 1\}$.
Then the $H^1_P(\mathcal{D})$-conforming finite element spaces are
$V_h = P_1(\mathcal{T}_h)\cap H^1_P(\mathcal{D})$ and $V_H = P_1(\mathcal{T}_H)\cap H^1_P(\mathcal{D})$.
Denote $V_h = span\{\phi_1^h, \cdots, \phi_{N_h}^h\}$ and $V_H = span\{\phi_1^H, \cdots, \phi_{N_H}^H\}$, where $N_h$ and $N_H$ are the number of vertices of the fine mesh and the coarse mesh, respectively. The wave function is approximated by $\psi_h^{\epsilon}(t, \bx) = \sum_p^{N_h} U_p(t)\phi_p^h(\bx)$ on the fine mesh, where $U_p(t)\in\mathds{C},  p = 1,\cdots,N_h$ and $t\in[0, T]$.

\subsection{TS-FEM for the NLSE}
In the case of nontrivial potentials, the numerical mass density may decay towards zero with an exponential rate when utilizing the direct Backward Euler method. Time-splitting manners can maintain the mass of the system. Therefore, we adopt Strang splitting methods for time-stepping. The NLSE is rewritten to
\begin{align}
  i\epsilon\partial_t\psi^{\epsilon} = (\mathcal{L}_1 + \mathcal{L}_2)\psi^{\epsilon},
\end{align}
and its exact solution has the form $\psi^{\epsilon}(t) = S^t\psi_{\mathrm{in}}$, where $S^t = \exp(-i(\mathcal{L}_1+\mathcal{L}_2)t/\epsilon)$. To efficiently handle the nonlinear term, we present two approaches as follows, both of which require solving linear equations:
\begin{enumerate}
  \item Option 1,
  \begin{equation}
    \mathcal{L}_1(\cdot) = -\frac {\epsilon^2}{2}\Delta(\cdot) + v(\cdot), \quad \mathcal{L}_2(\cdot) = \lambda|\cdot|^2(\cdot).
    \label{equ:choice-1}
  \end{equation}
  \item Option 2,
  \begin{equation}
    \mathcal{L}_1(\cdot) = -\frac {\epsilon^2}{2}\Delta(\cdot), \quad \mathcal{L}_2(\cdot) = v(\cdot) + \lambda|\cdot|^2(\cdot).
    \label{equ:choice-2}
  \end{equation}
\end{enumerate}
Computing the commutator $[\mathcal{L}_1, \mathcal{L}_2] = \mathcal{L}_1\mathcal{L}_2 - \mathcal{L}_2\mathcal{L}_1$ shows that the regularity of potential $v \in C^2(\mathcal{D})$ is required for Option 2, whereas Option 1 does not need this requirement.

From $t_n$ to $t_{n+1}$, the Strang splitting yields
\begin{equation}
  \psi^{\epsilon,n+1}
  := \mathcal{L}\psi^{\epsilon, n}
  = \exp\left( -\frac{i\Delta t}{2\epsilon}\mathcal{L}_2(\cdot) \right)
  \circ \exp\left( -\frac{i\Delta t}{\epsilon}\mathcal{L}_1 \right)
  \exp\left( -\frac{i\Delta t}{2\epsilon} \mathcal{L}_2(\cdot) \right)
  \circ \psi^{\epsilon, n}.
  \label{equ:one-step-Srange-splitting1}
\end{equation}
This formulation can be written as
\begin{align}
  \psi^{\epsilon,n+1}
  = \exp\left(-\frac{i\Delta t}{\epsilon}(\mathcal{L}_1 + \mathcal{L}_2(\psi^{\epsilon,n}))\right)\psi^{\epsilon, n} + \mathcal{R}_1^n.
  \label{equ:Strange-splitting-remnant}
\end{align}
By the Taylor expansion, we have $\|\mathcal{R}_1^n\| = \mathcal{O}\left( \frac{\Delta t^3}{\epsilon^3} \right)$. Furthermore, we define the $n$-fold composition
\begin{equation}
    \psi^{\epsilon, n} = \mathcal{L}^n\psi_{\mathrm{in}} = \underbrace{\mathcal{L}(\Delta t, \cdot)\circ\cdots\circ
    \mathcal{L}(\Delta t, \cdot)}_{n \text{ times}}\psi_{\mathrm{in}}.
\end{equation}

Next, we introduce the classical finite element discretization for the operator $\mathcal{L}_1$. Define the weak form
\begin{equation}
  i\epsilon(\partial_t\psi^{\epsilon},\phi) = a(\psi^{\epsilon}, \phi), \quad \forall\phi\in H^1_P(\mathcal{D}),
\end{equation}
where $a(\psi^{\epsilon}, \phi)$ is determined by the option of $\mathcal{L}_1$. For example, setting $\mathcal{L}_1 = -\frac {\epsilon^2}{2}\Delta + v$, we have $a(\psi^{\epsilon}, \phi) = \frac{\epsilon^2}{2}(\nabla\psi^{\epsilon}, \nabla\phi) + (v\psi^{\epsilon}, \phi)$ and the Galerkin equations
\begin{equation}
  i\epsilon\sum_p\mathrm{d}_t{U}_p(\phi_p^h, \phi_q^h) = \frac{\epsilon^2}{2}\sum_pU_p(t)(\nabla\phi_p^h, \nabla\phi_q^h) + \sum_pU_p(t)(v\phi_p^h, \phi_q^h)
\end{equation}
with $q = 1,\cdots, N_h$. Its matrix form is
\begin{equation}
  i\epsilon M^h\mathrm{d}_t{U}(t) = \left(\frac{\epsilon^2}{2}S^h + V^h\right)U(t),
\end{equation}
where $U(t)$ is a vector with $U(t) = (U_1(t),\cdots, U_{N_h}(t))^T$, $M^h = [M_{pq}^h]$ is the mass matrix with $M_{pq}^h = (\phi_p^h, \phi_q^h)$, $S^h = [S_{pq}^h]$ is the stiff matrix with $S_{pq}^h = (\nabla\phi_p^h, \nabla\phi_q^h)$, and $V^h = [V_{pq}^h]$ is the potential matrix with $V_{pq}^h = (v\phi_p^h, \phi_q^h)$.

We now present the formal TS-FEM methods for the deterministic NLSE. The first one is the discretized counterpart of Option 1:
\begin{align}
  &\tilde{U}^{n} = \exp\left( -\frac{i\lambda\Delta t}{2\epsilon} |{U}^{n}|^2 \right){U}^{n}, \nonumber\\
  &\tilde{U}^{n+1} = P \exp\left( -\frac{i\Delta t}{\epsilon}\Lambda \right)(P^{-1}\tilde{U}^{n}),\\
  &{U}^{n+1} =\exp\left( -\frac{i\lambda\Delta t}{2\epsilon} |\tilde{U}^{n+1}|^2 \right)\tilde{U}^{n+1}, \nonumber
\end{align}
where $(M^h)^{-1}(\frac{\epsilon^2}{2}S^h + V^h) = P\Lambda P^{-1}$ with $\exp(-i\Delta t\Lambda/\epsilon)$ being a diagonal matrix. We call it {\bf SI} in the remainder of this paper. Owing to the application of the eigendecomposition method~\cite{doi:10.1137/19M127389X}, the error in time is mainly contributed by the time-splitting manner. Meanwhile, this scheme does not require time step size $\Delta t = {o}(\epsilon)$, although the full linear semiclassical Schr\"odinger equation must be solved.

Option 2 has been extensively used in previous works, such as~\cite{BAO2003318,doi:10.1137/S1064827501393253}. In the FEM framework, it solves the NLES in the following procedures:
\begin{align}
  &\tilde{U}^n =\exp\left( -\frac{i\Delta t}{2\epsilon} (v + \lambda|{U}^n|^2) \right) {U}^n, \nonumber\\
  &iM^h\left(\frac{\tilde{U}^{n+1} - \tilde{U}^{n}}{\Delta t}\right) = \frac{\epsilon}{2} S^h\left(\frac{\tilde{U}^{n+1} + \tilde{U}^{n}}{2}\right), \\
  &U^{n+1} =\exp\left( -\frac{i\Delta t}{2\epsilon} (v+\lambda|\tilde{U}^{n+1}|^2) \right) \tilde{U}^{n+1}. \nonumber
\end{align}
This method requires the mesh size $h = \mathcal{O}(\epsilon)$ and time step size $\Delta t = \mathcal{O}(\epsilon)$~\cite{doi:10.1137/S1064827501393253}, and we call it {\bf SII} in the remaining of this paper.

Denote $L$ the discretized counterpart of $\mathcal{L}$, and similarly, $L_1$ and $L_2$ their respective discretized versions.
From $t_n$ to $t_{n+1}$, the discretized solution in both time and space can be determined by the recurrence
\begin{equation}
  U^{n+1} = {L}(\Delta t, U^n)U^n = {L}_2\left(\frac{\Delta t}{2}, {L}_1(\Delta t){L}_2\left(\frac{\Delta t}{2}, U^n\right)\right)U^n.
  \label{equ:one-step-evolution-TS}
\end{equation}
Denote $\psi^{\epsilon, n}_h = \sum_{p=1}^{N_h}U_p^n\phi_p^h$, and for simplicity we employ a formal notation for the $n$-fold composition
\begin{equation}
  \psi^{\epsilon, n}_h = {L}^{n}\psi^{0}_{h} = \underbrace{{L}(\Delta t, \cdot)\circ\cdots\circ{L}(\Delta t, \cdot)}_{n \text{ times}}\psi_h^{0},
  \label{equ:fully-TS-FEM}
\end{equation}
where $\psi_h^0 = {R}_h\psi_{\mathrm{in}}$ with $R_h$ being the Ritz projection operator.

\subsection{MsFEM for the deterministic NLSE}
Instead of the FEM, we construct the multiscale basis functions to reduce dofs in computations. The $P_1$ FEM basis functions on both the coarse mesh $\mathcal{T}_H$ and fine mesh $\mathcal{T}_h$ are required simultaneously. To describe the localized property of multiscale basis functions, we define a series of nodal patches $\{D_{\ell}\}$ associated with $\bx_p\in\mathcal{N}_H$ as
\begin{align*}
  &D_0(\bx_p) := \mathrm{supp}\{\phi_p\} = \cup\{K\in\mathcal{T}_H\;|\; \bx_p\in K\}, \\
  &D_{\ell} := \cup\{K\in\mathcal{T}_H\;|\;K\cap\overline{D_{\ell-1}}\neq \emptyset\},\quad \ell = 1,2,\cdots.
\end{align*}

The multiscale basis functions are obtained by solving the optimization problems
\begin{align}
  \phi_p = &\arg\min_{\phi\in H^1_P(\mathcal{D})}a(\phi, \phi),\label{equ:optimal-problem-objective}\\
  &\quad \text{s.t.}\int_{\mathcal{D}}\phi\phi_q^H\mathrm{d}\bx = \lambda(H)\delta_{pq}, \quad \forall 1\leq q \leq N_H,
  \label{equ:optimal-problem}
\end{align}
where $a(\phi, \phi) = \frac{\epsilon^2}{2}(\nabla\phi, \nabla\phi) + (v\phi, \phi)$, and $\lambda(H) = 1$ in the previous work~\cite{CHEN2020113232,doi:10.1137/19M1236989,doi:10.1137/19M127389X,doi:10.1137/18M1205844,li2018computing}. Note that the localized constraint is not considered in the optimal problems, thus we obtain the global basis functions.

In this work, we set $\lambda(H) = (1, \phi_q^H)$, and it can be computed explicitly.
To explain this setup, we introduce the weighted Cl\'ement-type quasi-interpolation operator~\cite{doi:10.1137/130933198}
\begin{equation}
  I_{H}: H^{1}_P(\mathcal{D})\rightarrow V_{H}, \quad f \mapsto I_{H}(f):= \sum_p \frac{(f, \phi^H_p)}{(1,\phi^H_p)} \phi^H_p.
\end{equation}
The high-resolution finite element space $V_h = V_H \oplus W_h$, where $W_h$ is the kernel space of $I_H$. And for all $f \in H^1_P \cap H^2$, it holds~\cite{Malqvist2015}
\begin{equation}
  \|f - I_H(f)\| \leq H^2\|f\|_{H^2}.
\end{equation}

In the MsFEM space, the wave function $\psi^{\epsilon}$ is approximated with
\begin{equation}
  \psi^{\epsilon}(\bx) \approx \sum_{p=1}^{N_H} \hat{U}_p\phi_p.
  \label{equ:solution-ms_basis}
\end{equation}
It can be projected onto the coarse mesh through
\begin{equation*}
    I_H(\psi^{\epsilon})
    =
    \sum_{p=1}^{N_H}\frac{(\sum_{q=1}^{N_H}\hat{U}_q\phi_q, \phi_p^H)}{(1,\phi_p^H)}\phi_p^H
    =
    \sum_{p=1}^{N_H}\frac{\lambda(H)\hat{U}_p}{(1,\phi_p^H)}\phi_p^H.
\end{equation*}
If $\psi^{\epsilon}$ is continuous at $\bx_p$, the above formula indicates that
$$\psi^{\epsilon}(\bx_p) \approx \frac{\lambda(H)\hat{U}_p}{(1,\phi_p^H)}.$$
Let $\lambda(H) = 1$, and we can see that it holds $\psi^{\epsilon}(\bx_p) \approx \hat{U}_p/(1,\phi_p^H)$ in the MsFEM space. Define $\hat{\phi}_p = (1, \phi^H_p){\phi}_p$, where $\hat{\phi}_p$ is independent of the mesh size $H$. Then, \eqref{equ:solution-ms_basis} can be rewritten to
\begin{equation}
 \psi^{\epsilon}(\bx) \approx \sum_{p=1}^{N_H} \psi^{\epsilon}(\bx_p)(1, \phi_p^H)\phi_p = \sum_{p=1}^{N_H} \psi^{\epsilon}(\bx_p)\hat{\phi_p}.
 \label{equ:solution-ms_basis2}
\end{equation}
Note that $\hat{\phi_p}$ is still the multiscale basis function at $\bx_p$.
We consider the following two equations
\begin{equation}
 i\epsilon\sum_{p=1}^{N_H}(\phi_p, \phi_q)\mathrm{d}_t\hat{U}_p = \sum_{p=1}^{N_H}(\mathcal{H}\phi_p, \phi_q)\hat{U}_p
 \label{equ:multiscale-equation1}
\end{equation}
and
\begin{equation}
 i\epsilon\sum_{p=1}^{N_H}(\hat{\phi}_p, \hat{\phi}_q)\mathrm{d}_t\hat{U}_p = \sum_{p=1}^{N_H}(\mathcal{H}\hat{\phi}_p, \hat{\phi}_q)\hat{U}_p.
 \label{equ:multiscale-equation2}
\end{equation}
If $\lambda = 0$, the two equations have the same solution with a given initial condition, while for $\lambda \neq 0$, the factor $(1, \phi^H_p)$ in the basis functions cannot be eliminated in both sides of~\eqref{equ:multiscale-equation2}, and the two equations give different solutions. This issue is addressed by the setup $\lambda(H) = (1, \phi_p^H)$.

Solving the optimal problems~\eqref{equ:optimal-problem} on the fine mesh, we get
\begin{equation*}
  \phi_p = \sum_{s=1}^{N_h}c_p^s\phi_s^h, \quad p = 1, \cdots, N_H.
\end{equation*}
Define the MsFEM space $V_{ms} = span\{\phi_1,\cdots, \phi_{N_H}\}$,
and it holds true that $V_{ms} \subset V_h$. Hence the solution of optimal problems defines a linear transformation $\mathcal{C}: V_{h} \mapsto V_{ms}$. On the other hand, the solution on the fine mesh can be reconstructed utilizing this linear mapping, which is essential in the formulation of the cubic nonlinear matrix. Note that the factor $\lambda(H)$ is a rescaling factor, and it does not change the basis function space. Thus we have the following propositions.
\begin{proposition}[\cite{Wu2022}, Lemma 3.2]
  For all $\phi \in V_{ms}$ and $w \in W_h$, $a(\phi, w) = 0$ and $V_h = V_{ms} \oplus W_h$.
\end{proposition}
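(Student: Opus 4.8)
The plan is to prove the two assertions separately: first derive the orthogonality $a(\phi,w)=0$ from the first-order optimality of the minimization that defines the basis functions, and then obtain the splitting $V_h=V_{ms}\oplus W_h$ by a dimension count once I have shown that $I_H$ is injective on $V_{ms}$.

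For the orthogonality, I would use the variational characterization of $\phi_p$ from~\eqref{equ:optimal-problem}. Each $\phi_p$ minimizes the quadratic functional $a(\phi,\phi)$ over the affine feasible set $\{\phi\in V_h:(\phi,\phi^H_q)=\lambda(H)\delta_{pq},\ 1\le q\le N_H\}$. Because these constraints are affine, the set of directions $w$ for which $\phi_p+tw$ remains feasible for all $t$ is precisely the linear space $\{w\in V_h:(w,\phi^H_q)=0\ \forall q\}$, which by the definition of $I_H$ is exactly the kernel $W_h$. Hence for every $w\in W_h$ the one-variable map $t\mapsto a(\phi_p+tw,\phi_p+tw)=a(\phi_p,\phi_p)+2t\,a(\phi_p,w)+t^2 a(w,w)$, using the symmetry and bilinearity of $a$, attains its minimum at $t=0$, which forces $a(\phi_p,w)=0$. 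By linearity this extends to $a(\phi,w)=0$ for all $\phi\in V_{ms}$ and $w\in W_h$. Equivalently, one may introduce Lagrange multipliers and observe that the stationarity condition writes $a(\phi_p,\cdot)$ as a linear combination of the functionals $(\cdot,\phi^H_q)$, each of which annihilates $W_h$.

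For the decomposition I would apply $I_H$ to a single basis function and invoke the constraint directly. Using $(\phi_p,\phi^H_q)=\lambda(H)\delta_{pq}$ gives $I_H(\phi_p)=\sum_q \frac{(\phi_p,\phi^H_q)}{(1,\phi^H_q)}\phi^H_q=\frac{\lambda(H)}{(1,\phi^H_p)}\phi^H_p$, a nonzero multiple of $\phi^H_p$. Since $\{\phi^H_q\}$ is a basis of $V_H$, this shows simultaneously that the $\{\phi_p\}$ are linearly independent (so $\dim V_{ms}=N_H$) and that $I_H$ maps $V_{ms}$ bijectively onto $V_H$. Consequently, if $\phi\in V_{ms}\cap W_h$ then $I_H(\phi)=0$ together with the injectivity of $I_H$ on $V_{ms}$ forces $\phi=0$, so the intersection is trivial. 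Combining $\dim V_{ms}=N_H$ with $\dim W_h=N_h-N_H$, which follows from the given splitting $V_h=V_H\oplus W_h$, yields $\dim V_{ms}+\dim W_h=N_h=\dim V_h$; together with the trivial intersection this gives $V_h=V_{ms}\oplus W_h$.

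I expect the only delicate point to be the justification underlying the first part: that the constrained quadratic program has a unique minimizer and that $W_h$ is genuinely its space of admissible perturbations. The non-degeneracy of the constraints is clear since the $\phi^H_q$ are linearly independent, but the existence and uniqueness of $\phi_p$ rely on the positivity of $a$ on the relevant subspace, which is where the potential bound $\|v(\bx,\omega)\|_{\infty}\lesssim \epsilon^2/H^2$ of Assumption~\ref{assump:potential} enters, combined with a Poincar\'e-type control of $\|w\|$ by $\|\nabla w\|$ on $W_h$. I would record this coercivity once and reuse it; all remaining steps are routine linear algebra and require no further estimates.
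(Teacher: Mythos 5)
Your argument is correct, and the first half is exactly the route the paper takes (by deferring to Lemma 3.2 of Wu et al.): the orthogonality $a(\phi_p,w)=0$ for $w\in W_h$ is the first-order optimality condition of the equality-constrained quadratic program, since $W_h=\{w:(w,\phi_q^H)=0\ \forall q\}$ is precisely the space of feasible perturbations. One small point of care: $a(\cdot,\cdot)$ here is a Hermitian sesquilinear form on a complex space, so the cross term in your expansion is $2t\,\Re a(\phi_p,w)$; you recover the full $a(\phi_p,w)=0$ by also perturbing in the direction $iw\in W_h$, or more cleanly via the Lagrange-multiplier formulation you mention, which expresses $a(\phi_p,\cdot)$ as a combination of the functionals $(\cdot,\phi_q^H)$ annihilating $W_h$.

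For the decomposition the two proofs diverge mildly. You argue by dimension count: the constraint $(\phi_p,\phi_q^H)=\lambda(H)\delta_{pq}$ shows $I_H$ maps $V_{ms}$ bijectively onto $V_H$, hence $\dim V_{ms}=N_H$, $V_{ms}\cap W_h=\{0\}$, and $\dim V_{ms}+\dim W_h=\dim V_h$. The paper instead gives a constructive splitting: for any $f\in V_h$ it sets $f^*=\sum_p \frac{(f,\phi_p^H)}{(1,\phi_p^H)}\phi_p\in V_{ms}$ and checks $(f-f^*,\phi_q^H)=0$, so $f-f^*\in W_h$. Both rest on the same biorthogonality relation; the paper's version has the advantage of exhibiting the $V_{ms}$-component explicitly (it is the analogue of $I_H$ with range $V_{ms}$), while yours has the advantage of making the trivial intersection — which the paper leaves implicit — an explicit step. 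Your closing remark about coercivity is also well placed: positivity of $a$ on the feasible directions (via Assumption~\ref{assump:potential} and a Poincar\'e-type bound on $W_h$) is needed for existence and uniqueness of the minimizers, but not for the orthogonality identity itself, which follows from minimality alone.
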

\begin{proof}
  As the same procedures in~\cite{Wu2022}, we directly obtain $a(f, w) = 0, \forall f \in V_{ms}, w \in W_h$.
  For any $f \in V_h$, define
  $$f^* = \sum_{p=1}^{N_H}\frac{(f, \phi_p^H)}{(1, \phi_p^H)}\phi_p.$$
  Then $f^* \in V_{ms}$ and $(f - f^*, \phi_p^H) = 0$ for $p = 1,\cdots, N_H$. Thus $f - f^* \in W_h$ and we get the decomposition $V_h = V_{ms} \oplus W_h$.
\end{proof}

Due to $V_h = V_{ms} \oplus W_h$, $W_h$ is also the kernel space of the linear map $\mathcal{C}$.
Furthermore, combining an iterative Caccioppoli-type argument~\cite{Hou2017,li2018computing,owhadi2017multigrid,PETERSEIM2017} and some refined assumption for the potential, and the multiscale finite element basis functions have the following exponential decaying property.

\begin{proposition}[\cite{Wu2022}, Theorem 3.2]
  Under \cref{assump:potential} with a resolution constant as in \cite{Wu2022}, there exists a constant $\beta \in (0, 1)$ independent of $H$ and $\epsilon$, such that
  \begin{equation}
    \|\nabla\phi_p\|_{L^2(\mathcal{D} \backslash D_{\ell})} \leq \beta^{\ell} \|\nabla\phi_p\|,
    \label{equ:basis-functions-proposition}
  \end{equation}
  for all $p = 1, \cdots, N_H$.
\end{proposition}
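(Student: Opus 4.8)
The plan is to reproduce the iterative Caccioppoli-type argument from the localized orthogonal decomposition theory, adapted to the energy bilinear form $a(\phi,\phi)=\frac{\epsilon^2}{2}(\nabla\phi,\nabla\phi)+(v\phi,\phi)$ with the potential kept under control by \cref{assump:potential}. The starting point is the variational characterization of the minimizer: by the preceding proposition, $\phi_p\in V_{ms}$ satisfies the quasi-orthogonality relation $a(\phi_p,w)=0$ for every $w\in W_h=\ker I_H$. This ``$a$-harmonicity against the kernel space'' is the only property of $\phi_p$ that the argument uses. I would then fix $\ell$ and introduce a macroscopic Lipschitz cutoff $\eta$ on $\mathcal{D}$ with $\eta\equiv 0$ on $D_{\ell-1}$, $\eta\equiv 1$ on $\mathcal{D}\setminus D_\ell$, $0\le\eta\le 1$, and $\|\nabla\eta\|_{\infty}\lesssim H^{-1}$, so that $\nabla\eta$ is supported only on the single layer of coarse elements $D_\ell\setminus D_{\ell-1}$.

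The core of the proof is a one-step energy contraction obtained by testing $a(\phi_p,w)=0$ against an admissible $w\in W_h$ built from $\eta\phi_p$. Since $\eta\phi_p\notin V_h$ in general, I would first return it to the fine space via the nodal interpolant $I_h$ and project off its coarse component, taking $w=(1-I_H)I_h(\eta\phi_p)\in W_h$. Expanding $a(\phi_p,\eta\phi_p)$ and inserting $w$ isolates the far-field Dirichlet energy $\frac{\epsilon^2}{2}\int\eta|\nabla\phi_p|^2$ together with commutator terms carrying the factor $\nabla\eta$, all localized to the annulus $D_\ell\setminus D_{\ell-1}$. Two ingredients are essential here: the local first-order approximation and $H^1$-stability of the Cl\'ement-type operator $I_H$ (in the spirit of the recorded bound $\|f-I_H f\|\le H^2\|f\|_{H^2}$, used elementwise as $\|f-I_Hf\|_{L^2(K)}\lesssim H\|\nabla f\|$), and the coercivity of $a$. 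The scaling $\|v\|_{\infty}\lesssim\epsilon^2/H^2$ of \cref{assump:potential} is precisely what renders the lower-order term $(v\,\cdot,\cdot)$ harmless: combined with the Poincar\'e inequality $\|w\|\lesssim H\|\nabla w\|$ valid on $W_h$, it keeps $a$ uniformly equivalent to $\frac{\epsilon^2}{2}\|\nabla\cdot\|^2$ with constants independent of both $H$ and $\epsilon$. After Cauchy--Schwarz and absorption, this produces a constant $C_0$ depending only on the mesh regularity and the interpolation constants with
\[
\|\nabla\phi_p\|_{L^2(\mathcal{D}\setminus D_\ell)}^2 \le C_0\,\|\nabla\phi_p\|_{L^2(D_\ell\setminus D_{\ell-1})}^2.
\]

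The final step is a telescoping iteration. Writing the annulus energy as $\|\nabla\phi_p\|_{L^2(\mathcal{D}\setminus D_{\ell-1})}^2-\|\nabla\phi_p\|_{L^2(\mathcal{D}\setminus D_\ell)}^2$ and substituting into the Caccioppoli bound yields the contraction $(1+C_0)\|\nabla\phi_p\|_{L^2(\mathcal{D}\setminus D_\ell)}^2\le C_0\|\nabla\phi_p\|_{L^2(\mathcal{D}\setminus D_{\ell-1})}^2$, i.e. a one-step factor $\beta^2:=C_0/(1+C_0)\in(0,1)$. Iterating from $\ell$ down to $0$ and taking square roots gives $\|\nabla\phi_p\|_{L^2(\mathcal{D}\setminus D_\ell)}\le\beta^\ell\|\nabla\phi_p\|$, and since $C_0$ is independent of $H$ and $\epsilon$ so is $\beta$.

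I expect the Caccioppoli step to be the main obstacle: the delicate part is bookkeeping the commutator terms generated by $\nabla\eta$ and the fine interpolation $I_h$, and verifying that every constant produced is independent of $\epsilon$. The role of \cref{assump:potential} is exactly to guarantee this $\epsilon$-uniformity, for without the scaling $\|v\|_{\infty}\lesssim\epsilon^2/H^2$ the potential term could dominate the Dirichlet energy at the coarse scale and the contraction factor would degrade as $\epsilon\to 0$. Some additional care is warranted because the $\phi_p$ are here the \emph{global} basis functions (the localized constraint having been dropped in the optimization), so that this decay estimate is precisely what justifies truncating them to the patches $D_\ell$ in practice.
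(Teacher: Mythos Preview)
Your proposal is correct and follows exactly the approach the paper itself indicates: the paper does not give a self-contained proof of this proposition but only cites it from \cite{Wu2022}, noting that it is obtained by ``combining an iterative Caccioppoli-type argument~\cite{Hou2017,li2018computing,owhadi2017multigrid,PETERSEIM2017} and some refined assumption for the potential.'' Your sketch---testing the $a$-orthogonality against $(1-I_H)I_h(\eta\phi_p)$, using the interpolation bounds for $I_H$ together with the scaling $\|v\|_\infty\lesssim\epsilon^2/H^2$ to control the potential term, and iterating the resulting annulus estimate---is precisely that argument, so there is nothing to add.
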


In practice, localized basis functions are frequently used to reduce the computational complexity. Here we aim to construct the MsFEM for the NLSE with random potentials and estimate its approximation error, and thus the global basis functions are used. To accelerate computations, we also present a novel approach in Appendix A, in which the multiscale basis functions are approximated using the low-dimensional POD basis.

In the MsFEM space, the weak form of the full NLSE is discretized as
\begin{multline}
  i\epsilon\left(\sum_{p = 1}^{N_H}\sum_{s = 1}^{N_h}\mathrm{d}_t\hat{U}_pc_p^s\phi_s^h, \sum_{s = 1}^{N_h}c_l^s\phi_s^h\right) = \frac{\epsilon^2}{2}\left( \sum_{p = 1}^{N_H}\sum_{s = 1}^{N_h}\hat{U}_pc_p^s\nabla\phi_s^h, \sum_{s = 1}^{N_h}c_l^s\nabla\phi_s^h\right) \\+ \lambda\left( \left|\sum_{p = 1}^{N_H}\sum_{s = 1}^{N_h}\hat{U}_pc_p^s\phi_s^h\right|^2\sum_{p = 1}^{N_H}\sum_{s = 1}^{N_h}\hat{U}_pc_p^s\phi_s^h, \sum_{s = 1}^{N_h}c_l^s\phi_s^h\right)
  \label{equ:weak-form-multiscale-basis-fine-mesh}
\end{multline}
for all $l = 1,\cdots, N_H$. The stiff matrix and mass matrix constructed by the multiscale basis functions satisfy $M^{ms} = \mathcal{C}^T M^h\mathcal{C}$ and $S^{ms} = \mathcal{C}^T S^h\mathcal{C}$.
For the nonlinear term, the solution on the fine mesh is reconstructed by $\mathcal{C}\hat{U}$, and we then get the similar form ${N}^{ms} = \mathcal{C}^T {N}^h \mathcal{C}$.
The construction of ${N}^h$ suffers from heavy computation, especially for high-dimensional problems. The application of time-splitting methods can avoid this issue. Thus, we only need to solve linear equations at each time step, achieving high efficiency.

According to \eqref{equ:solution-ms_basis} and \eqref{equ:solution-ms_basis2}, the numerical solution on the coarse mesh can be denoted by $\{\hat{U}_p(t)\}_{p=1}^{N_H}$, while on the fine mesh, it is denoted by $\left\{\sum_{p = 1}^{N_H}\hat{U}_p(t)c_p^s\right\}_{s=1}^{N_h}$. For the sake of clarity, in the sequel, we denote the $\psi^{\epsilon}_h$ the classical FEM solution, and $\psi^{\epsilon}_H$ and $\psi^{\epsilon}_{H,h}$ the numerical solution constructed by the multiscale basis functions on the coarse mesh and fine mesh, respectively.

\section{Convergence analysis}
\subsection{Convergence analysis of the TS-FEM}
In this part, the {\bf SI} is mainly considered and the $L^2$ error will be estimated. We start the convergence analysis from the temporal error estimate at the initial time step.

\begin{lemma}
  If $\psi_{\mathrm{in}}\in H^4$, the error at the initial time step is bounded in the $L^2$ norm by
  $$\|\psi^{\epsilon}(\Delta t) - \psi^{\epsilon,1}\| = \|S^{\Delta t}\psi_{\mathrm{in}} - \mathcal{L}(\Delta t)\psi_{\mathrm{in}}\| \leq C\|\psi_{\mathrm{in}}\|_{H^4}\frac{\Delta t^3}{\epsilon^3},$$
  where $C$ is a constant.
\end{lemma}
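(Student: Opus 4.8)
The plan is to establish the third-order local error of the symmetric (Strang) splitting by comparing the expansions in $\Delta t$ of the exact propagator $S^{\Delta t}$ and the split propagator $\mathcal{L}(\Delta t)$ applied to $\psi_{\mathrm{in}}$. First I would expand both operators to third order. Since each factor of $\mathcal{L}(\Delta t)$ carries the exponent $\Delta t/\epsilon$, and likewise $S^{\Delta t}=\exp(-i\Delta t(\mathcal{L}_1+\mathcal{L}_2)/\epsilon)$, the natural expansion parameter is $\Delta t/\epsilon$, so every order in $\Delta t$ comes with a matching power of $1/\epsilon$. By the symmetry of the Strang product, the contributions of order $(\Delta t/\epsilon)^0$, $(\Delta t/\epsilon)^1$, and $(\Delta t/\epsilon)^2$ coincide with those of $S^{\Delta t}$, so the leading discrepancy is of order $(\Delta t/\epsilon)^3$, which already fixes the $\Delta t^3/\epsilon^3$ scaling.

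Because $\mathcal{L}_2(\cdot)=\lambda|\cdot|^2(\cdot)$ is nonlinear, I would replace naive operator manipulations by the Lie-derivative calculus for flows. The subflow generated by $\mathcal{L}_2$ is explicit, namely $\exp(-it\mathcal{L}_2/\epsilon)\phi=\exp(-i\lambda t|\phi|^2/\epsilon)\phi$, a pointwise phase rotation that preserves $|\phi|$ and is therefore smooth in $t$; this makes all required $t$-derivatives of the composition well defined. Expressing the local error through iterated Lie brackets, the leading third-order contribution is a linear combination of $[\mathcal{L}_1,[\mathcal{L}_1,\mathcal{L}_2]]$ and $[\mathcal{L}_2,[\mathcal{L}_2,\mathcal{L}_1]]$ evaluated at $\psi_{\mathrm{in}}$, each multiplied by $\Delta t^3/\epsilon^3$.

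The crux is then the $L^2$ bound of these brackets. The term $[\mathcal{L}_2,[\mathcal{L}_2,\mathcal{L}_1]]\psi_{\mathrm{in}}$ involves the Laplacian from $\mathcal{L}_1$ only once and is controlled by $C\|\psi_{\mathrm{in}}\|_{H^2}$, whereas the worst term $[\mathcal{L}_1,[\mathcal{L}_1,\mathcal{L}_2]]\psi_{\mathrm{in}}$ applies $\Delta$ twice and hence produces at most four spatial derivatives acting on $\psi_{\mathrm{in}}$; bounding it in $L^2$ yields $C\|\psi_{\mathrm{in}}\|_{H^4}$, which dictates the $H^4$ regularity requirement. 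The prefactors $\epsilon^2$ from each Laplacian satisfy $\epsilon^2\le 1$ and are absorbed into $C$, and the cubic nonlinear factors are handled by the Sobolev embedding $\|\psi_{\mathrm{in}}\|_{\infty}\le C\|\psi_{\mathrm{in}}\|_{H^2}$ stated in the text. Collecting the bounds gives $\|S^{\Delta t}\psi_{\mathrm{in}}-\mathcal{L}(\Delta t)\psi_{\mathrm{in}}\|\le C\|\psi_{\mathrm{in}}\|_{H^4}\Delta t^3/\epsilon^3$, since every lower-order contribution is dominated by $1/\epsilon^3$ for $\epsilon\le 1$.

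The main obstacle is making the third-order remainder rigorous for the nonlinear flow rather than merely formal: I must show that the higher-order terms in the Lie expansion are genuinely $O(\Delta t^3/\epsilon^3)$ and do not secretly demand $H^6$ data. This is exactly where the \emph{symmetric} structure of Strang splitting is essential, since it cancels the would-be $H^6$ contributions and leaves only the two double brackets above. Concretely, I would write the remainder in integral (variation-of-constants) form and bound it uniformly on $[0,\Delta t]$, using the assumed regularity $\psi^{\epsilon}\in L^{\infty}([0,T];H^4)$ together with \lemref{lem:regularity-NLSE} to keep $\|\psi^{\epsilon}(t)\|_{H^4}$ controlled along all intermediate states of the splitting.
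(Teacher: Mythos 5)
Your proposal is correct and follows essentially the same route as the paper: both expand the Strang composition via a BCH-type argument, identify the leading local error as a third-order term in $\Delta t/\epsilon$ involving two applications of $\mathcal{L}_1$ (the paper writes it as $\Gamma(2\mathcal{L}_1+\mathcal{L}_2)^2\psi_{\mathrm{in}}$, you as the double commutators $[\mathcal{L}_1,[\mathcal{L}_1,\mathcal{L}_2]]$ and $[\mathcal{L}_2,[\mathcal{L}_2,\mathcal{L}_1]]$), and bound it by $C\|\psi_{\mathrm{in}}\|_{H^4}\Delta t^3/\epsilon^3$. Your rendition via the Lie-derivative calculus for the nonlinear flow is the more careful standard version of the same computation, but it is not a genuinely different argument.
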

\begin{proof}
  According to \eqref{equ:Strange-splitting-remnant}, we have
  \begin{align*}
    \psi^{\epsilon,1} &= \exp\left(-\frac{i\Delta t}{2\epsilon}\mathcal{L}_2(\hat{\psi})-\frac{i\Delta t}{\epsilon}\mathcal{L}_1-\frac{i\Delta t}{2\epsilon}\mathcal{L}_2(\psi^{\epsilon}_{\mathrm{in}})\right)\psi^{\epsilon}_{\mathrm{in}} \\
    &= \exp\left(-\frac{i\Delta t}{2\epsilon}\left(\mathcal{L}_2(\psi^{\epsilon}_{\mathrm{in}}) + \mathcal{O}(\frac{\Delta t^2}{\epsilon^2})\right)-\frac{i\Delta t}{\epsilon}\mathcal{L}_1-\frac{i\Delta t}{2\epsilon}\mathcal{L}_2(\psi^{\epsilon}_{\mathrm{in}})\right)\psi^{\epsilon}_{\mathrm{in}}\\
    &= \exp\left(-\frac{i\Delta t}{\epsilon}\mathcal{L}_1-\frac{i\Delta t}{\epsilon}\mathcal{L}_2(\psi^{\epsilon}_{\mathrm{in}})\right)\exp\left(-\frac{\Delta t^3}{\epsilon^3}\Gamma(2\mathcal{L}_1 + \mathcal{L}_2)^2\right)\psi^{\epsilon}_{\mathrm{in}},
  \end{align*}
  where $\Gamma$ depends on the form of $\mathcal{L}_2$. Use the expansion
  \begin{equation*}
    \exp\left(-\frac{\Delta t^3}{\epsilon^3}\Gamma(2\mathcal{L}_1 + \mathcal{L}_2)^2\right) = I - \frac{\Delta t^3}{\epsilon^3}\Gamma(2\mathcal{L}_1 + \mathcal{L}_2)^2 + \mathcal{O}\left(\frac{\Delta t^6}{\epsilon^6}\right)
  \end{equation*}
  and the dominant reminder has the form
  $$\mathcal{R}_1^0 = - \frac{\Delta t^3}{\epsilon^3}\Gamma(2\mathcal{L}_1 + \mathcal{L}_2)^2\psi^{\epsilon}_{\mathrm{in}}.$$
  Since the exact solution at $t = \Delta t$ is given by
  \begin{equation*}
    \psi^{\epsilon}(\Delta t) = S^{\Delta t}\psi^{\epsilon}_{\mathrm{in}} = \exp\left( -\frac{i\Delta t}{\epsilon}(\mathcal{L}_1+\mathcal{L}_2(\psi^{\epsilon}_{\mathrm{in}})) \right)\psi^{\epsilon}_{\mathrm{in}}.
  \end{equation*}
  There exists a constant such that
  \begin{equation*}
    \|\psi^{\epsilon}(\Delta t) - \psi^{\epsilon,1}\| \leq C\|\psi^{\epsilon}_{\mathrm{in}}\|_{H^4}\frac{\Delta t^3}{\epsilon^3}.
  \end{equation*}
\end{proof}

In turn, we prove the stability of the Strang splitting operator. Due to $\exp\left( -\frac{i\mathcal{L}_1t}{\epsilon} \right)$ being unitary, for any $f_1$, $f_2\in H^2$, we have
\begin{align*}
  \left\|\exp\left( -\frac{i\mathcal{L}_1t}{\epsilon} \right)f_1 - \exp\left( -\frac{i\mathcal{L}_1t}{\epsilon}\right)f_2\right\| = \left\|\exp\left( -\frac{i\mathcal{L}_1t}{\epsilon} \right)(f_1-f_2)\right\|
  = \|f_1 - f_2\|.
\end{align*}
Define $F(\psi) = -i\mathcal{L}_2(\psi)\psi$, the splitting solution for $\mathcal{L}_2$ is solved by the equation
\begin{equation}
    \epsilon\partial_t\psi - F(\psi) = 0.
\end{equation}
The nonlinear flow solved from this equation has the form
\begin{equation}
  Y^t\psi = \psi + \frac{1}{\epsilon}\int_0^tF(Y^s\psi)\mathrm{d}s.
  \label{equ:nonlinear-flow}
\end{equation}
Assume that $F$ is Lipschitz with a Lipschitz constant $M$, and repeat the proof in~\cite{Besse2003SIAM}. For all $f_1$, $f_2 \in L^2$, there exists a constant that depends on $F$ such that for all $0\leq \tau \leq 1$
\begin{align*}
  \|Y^{\tau}f_1 - Y^{\tau}f_2\| &\leq \|f_1 - f_2\| + \frac{1}{\epsilon}\int_0^{\tau}\|F(Y^sf_1) - F(Y^sf_2)\|\mathrm{d}s \\
  & \leq \|f_1 - f_2\| + \frac{M}{\epsilon}\int_0^{\tau}\|Y^sf_1 - Y^sf_2\|\mathrm{d}s.
\end{align*}
An application of the Gronwall lemma leads to
\begin{equation}
  \|Y^{\tau}f_1 - Y^{\tau}f_2\| \leq \exp\left(\frac{M {\tau}}{\epsilon}\right)\|f_1 - f_2\|.
\end{equation}
In particular, for $F(\psi) = \lambda|\psi|^2\psi$ we get
\begin{equation}
  \|\mathcal{L}({\tau})f_1 - \mathcal{L}({\tau})f_2\| \leq \exp\left(\frac{M\lambda {\tau}}{\epsilon}\right)\|f_1 - f_2\|.
  \label{equ:bound-splitting-operator}
\end{equation}
Besides, for the nonlinear flow \eqref{equ:nonlinear-flow}, we have the following lemma.
\begin{lemma}
    Let $\psi \in H^2$; if $F(\psi) = \lambda |\psi|^2\psi$, there exists a constant $C$ such that for all $0\leq \tau \leq 1$
    \begin{equation}
        \|Y^{\tau}\psi\|_{H^2} \leq \exp\left(\frac{ \lambda\tau\|\psi\|_{\infty}^2}{\epsilon}\right)\|\psi\|_{H^2}.
        \label{equ:nonlinear-flow-option1}
    \end{equation}
    If $F(\psi) = \lambda |\psi|^2\psi + v\psi$, there exists a constant $C$ such that for $v \in H^2$ and for all $0\leq \tau \leq 1$
    \begin{equation}
        \|Y^{\tau}\psi\|_{H^2} \leq \exp\left(\frac{\tau(\|v\|_{H^2} + \lambda\|\psi\|_{\infty}^2)}{\epsilon}\right)\|\psi\|_{H^2}.
        \label{equ:nonlinear-flow-option2}
    \end{equation}
    \label{lem:H2-norm-nonlinear-flow}
\end{lemma}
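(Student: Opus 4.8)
The plan is to exploit the fact that the nonlinear flow \eqref{equ:nonlinear-flow} is, for each fixed $\bx$, a pure time ODE: since $F$ carries no spatial derivatives, $u := Y^t\psi$ solves $\epsilon\partial_t u = F(u)$ pointwise in $\bx$. Recalling $F(u) = -i\mathcal{L}_2(u)u$ with $\mathcal{L}_2(u) = \lambda|u|^2$ (resp. $v + \lambda|u|^2$), one tests against $\bar u$: $\mathrm{Re}(\bar u F(u)) = \mathrm{Re}(-i(\lambda|u|^4 + v|u|^2)) = 0$ because $v$ is real. Hence $|u(t,\bx)| = |\psi(\bx)|$ is conserved pointwise, so that $\|Y^t\psi\| = \|\psi\|$ and, crucially, $\|Y^t\psi\|_\infty = \|\psi\|_\infty$ for every $t$. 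This conservation freezes $\|\psi\|_\infty$ (and $v$) along the flow and is exactly what produces the constant coefficients appearing in the exponents.

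First I would establish the gradient estimate. Differentiating the flow equation gives the linear (pointwise) ODE $\epsilon\partial_t\nabla u = -i\lambda(2|u|^2\nabla u + u^2\nabla\bar u)$ for Option 1, with the extra terms $-i(v\nabla u + u\nabla v)$ for Option 2. Pairing with $\overline{\nabla u}$ and taking real parts, the contributions $2|u|^2|\nabla u|^2$ and $v|\nabla u|^2$ are real and drop out after multiplication by $-i$, so the surviving terms are controlled by $\lambda|u|^2|\nabla u|^2 \le \lambda\|\psi\|_\infty^2|\nabla u|^2$ together with the cross term $u\nabla v$. Integrating in $\bx$ and applying Gronwall yields \eqref{equ:nonlinear-flow-option1}; for Option 2 the cross term $\int|u|\,|\nabla v|\,|\nabla u|\,\mathrm{d}\bx$ is bounded, by H\"older and the Sobolev embedding $H^1\hookrightarrow L^3$ (valid for $d\le 3$), by $C\|v\|_{H^2}\|u\|_{H^2}^2$, which is what contributes $\|v\|_{H^2}$ to the exponent in \eqref{equ:nonlinear-flow-option2}. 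The same scalar ODE also gives the pointwise bound $|\nabla u(t,\bx)| \le e^{\lambda t\|\psi\|_\infty^2/\epsilon}|\nabla\psi(\bx)|$, which I will reuse below.

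Next I would iterate to second order. Applying $\partial_i\partial_j$ to the flow equation produces $\epsilon\partial_t\partial_{ij}u = -i\lambda(2|u|^2\partial_{ij}u + u^2\partial_{ij}\bar u) - i\lambda R_{ij}$, where $R_{ij}$ collects the products of first derivatives such as $u\,\partial_i u\,\partial_j\bar u$ and $(\bar u\partial_i u + u\partial_i\bar u)\partial_j u$, together with the potential terms $v\partial_{ij}u$, $\partial_i v\,\partial_j u$, $u\,\partial_{ij}v$ for Option 2. The leading terms are handled as before: the real ones vanish after multiplication by $-i$, and the one that survives is bounded by $\lambda\|\psi\|_\infty^2\|\nabla^2 u\|^2$ (and the real term $v|\partial_{ij}u|^2$ again drops). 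The whole estimate therefore reduces to controlling $\int|\nabla^2 u|\,|R_{ij}|\,\mathrm{d}\bx$.

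The main obstacle is precisely this lower-order remainder $R_{ij}$, which is genuinely nonlinear in the first derivatives of $u$ and does not, by itself, close into a linear Gronwall inequality. I would control it by combining the pointwise gradient bound above with Gagliardo--Nirenberg/Sobolev inequalities in dimension $d\le 3$, estimating $\int|\nabla^2 u|\,|u|\,|\nabla u|^2\,\mathrm{d}\bx \le \|\psi\|_\infty\|\nabla^2 u\|\,\|\nabla u\|_{L^4}^2 \lesssim \|\psi\|_\infty\|\nabla^2 u\|\,\|\psi\|_{H^2}^2$, and likewise handling the potential contributions via $\|\nabla v\|_{L^3},\|\nabla^2 v\|_{L^2}\lesssim\|v\|_{H^2}$. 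Assembling the $L^2$, gradient, and Hessian estimates into one differential inequality for $\|u\|_{H^2}^2$, using the conservation of $\|\psi\|_\infty$ and the a priori controllable bound on $\|\psi\|_{H^2}$ (from $\psi^{\epsilon}\in L^\infty([0,T];H^4)$) to absorb the quadratic Sobolev factors into the constant $C$, and integrating over $[0,\tau]$ with $\tau\le 1$ via Gronwall, gives the stated exponential bounds \eqref{equ:nonlinear-flow-option1} and \eqref{equ:nonlinear-flow-option2}.
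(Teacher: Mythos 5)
Your route is genuinely different from the paper's. The paper works entirely with the integral (Duhamel) form \eqref{equ:nonlinear-flow}: it bounds $\|F(Y^s\psi)\|_{\infty}$ and then $\|F(Y^s\psi)\|_{H^2}$ by $(\|v\|_{H^2}+\lambda\|\psi\|_{\infty}^2)\|Y^s\psi\|_{H^2}$ in a single asserted product estimate and applies Gronwall twice -- it never differentiates the flow equation, and it silently uses the fact that $\|Y^s\psi\|_{\infty}$ can be replaced by $\|\psi\|_{\infty}$ inside the nonlinearity. Your differential/energy-estimate argument is more explicit and actually supplies the justification the paper omits: the pointwise conservation $|Y^t\psi(\bx)|=|\psi(\bx)|$ (which rigorously freezes $\|\cdot\|_{\infty}$ along the flow, rather than merely bounding it by a growing exponential as the paper's own first Gronwall step would), and the cancellation of the real terms $2\lambda|u|^2|\nabla u|^2$ and $v|\nabla u|^2$ after multiplication by $-i$. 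What the paper's approach buys is brevity; what yours buys is an honest accounting of where the constant $C$ and the algebra property of $H^2$ (for $d\le 3$) actually enter.

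There is, however, one step in your closure that does not work as written. To control $\int|\nabla^2u|\,|u|\,|\nabla u|^2\,\mathrm{d}\bx$ you bound $\|\nabla u\|_{L^4}^2\lesssim\|\psi\|_{H^2}^2$ and then propose to absorb this factor using ``the a priori controllable bound on $\|\psi\|_{H^2}$ from $\psi^{\epsilon}\in L^{\infty}([0,T];H^4)$.'' That regularity belongs to the exact PDE solution, not to the argument of the flow map: the lemma must hold for an arbitrary $\psi\in H^2$, and in the convergence proof $Y^{\tau}$ is applied to intermediate splitting states, so you cannot borrow the exact solution's bounds; moreover, replacing $\|u(t)\|_{H^2}$ by a constant is circular, and keeping it yields a quadratic (Riccati-type) differential inequality that does not produce the stated linear exponential. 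The fix is already half-present in your write-up: use the Gagliardo--Nirenberg inequality $\|\nabla u\|_{L^4}^2\le C\|u\|_{\infty}\|\nabla^2u\|$ (valid in all $d\le 3$) together with the pointwise conservation $\|u\|_{\infty}=\|\psi\|_{\infty}$ that you established, so that
\begin{equation*}
  \int_{\mathcal{D}}|\nabla^2u|\,|u|\,|\nabla u|^2\,\mathrm{d}\bx
  \le \|u\|_{\infty}\,\|\nabla^2u\|\,\|\nabla u\|_{L^4}^2
  \le C\|\psi\|_{\infty}^2\|\nabla^2u\|^2,
\end{equation*}
which closes the Gronwall inequality for $\|\nabla^2u\|^2$ with the coefficient $C\lambda\|\psi\|_{\infty}^2/\epsilon$ and yields exactly the claimed exponent (up to the constant $C$ that the lemma statement announces but does not display -- a defect shared by the paper's own proof). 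With that substitution your argument is complete; the remaining remarks (the $H^1\hookrightarrow L^3$ detour for the cross term $\int|u|\,|\nabla v|\,|\nabla u|$ is unnecessary, since $\|u\|_{\infty}\|\nabla v\|\,\|\nabla u\|$ already suffices) are cosmetic.
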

\begin{proof}
    Consider $F(\psi) = \lambda |\psi|^2\psi + v\psi$. For the nonlinear flow \eqref{equ:nonlinear-flow}, we have
    \begin{equation*}
        \|Y^{\tau}\psi\|_{\infty} \leq \|\psi\|_{\infty} + \frac{1}{\epsilon}\int_0^{\tau}\|F(Y^s\psi)\|_{\infty}\mathrm{d}s \leq  \|\psi\|_{\infty} + \frac{\|v\|_{\infty} + \lambda \|\psi\|_{\infty}^2}{\epsilon}\int_0^{\tau}\|Y^s\psi\|_{\infty}\mathrm{d}s.
    \end{equation*}
    Then the application of Gronwall inequality yields
    \begin{equation*}
        \|Y^{\tau}\psi\|_{\infty} \leq \exp\left(\frac{\tau(\|v\|_{\infty} + \lambda\|\psi\|_{\infty}^2)}{\epsilon}\right)\|\psi\|_{\infty}.
    \end{equation*}
    Similarly, for the $H^2$ norm, we directly have
    \begin{equation*}
        \|Y^{\tau}\psi\|_{H^2} \leq \|\psi\|_{H^2} + \frac{\|v\|_{H^2} + \lambda \|\psi\|_{\infty}^2}{\epsilon}\int_0^{\tau}\|Y^s\psi\|_{H^2}\mathrm{d}s,
    \end{equation*}
    which also leads to
    \begin{equation*}
        \|Y^{\tau}\psi\|_{H^2} \leq \exp\left(\frac{\tau(\|v\|_{H^2} + \lambda\|\psi\|_{\infty}^2)}{\epsilon}\right)\|\psi\|_{H^2}.
    \end{equation*}
    Let $v = 0$ and we get \eqref{equ:nonlinear-flow-option1}. This completes the proof.
\end{proof}

For the semi-discretized time-splitting methods, we have the convergence theorem of temporal accuracy.

\begin{theorem}
  Let $\psi_{\mathrm{in}}\in H^4$, $T > 0$ and $\Delta t\in (0, \epsilon)$. For $n\Delta t \leq T$, there exists a constant $C$ such that
  \begin{equation}
    \| \mathcal{L}^n\psi_{\mathrm{in}} - S^{n\Delta t}\psi_{\mathrm{in}} \|
    \leq
    CT \| \psi_{\mathrm{in}} \|_{H^4} \left(1 + \frac{T}{\epsilon}\right)\frac{\Delta t^2}{\epsilon^3}.
  \end{equation}
  \label{thm:temporal-convergence}
\end{theorem}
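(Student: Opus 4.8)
The plan is to pass from the one-step (local) error established in the preceding lemma to the global error by a discrete stability argument of Lady Windermere's fan type. Writing $e_n := \|\mathcal{L}^n\psi_{\mathrm{in}} - S^{n\Delta t}\psi_{\mathrm{in}}\|$, I would insert the hybrid state $\mathcal{L}\bigl(S^{(n-1)\Delta t}\psi_{\mathrm{in}}\bigr)$ and split $e_n$, via the triangle inequality, into a \emph{propagated error} term $\|\mathcal{L}(\mathcal{L}^{n-1}\psi_{\mathrm{in}}) - \mathcal{L}(S^{(n-1)\Delta t}\psi_{\mathrm{in}})\|$ and a \emph{one-step local error} term $\|\mathcal{L}(S^{(n-1)\Delta t}\psi_{\mathrm{in}}) - S^{\Delta t}(S^{(n-1)\Delta t}\psi_{\mathrm{in}})\|$. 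The first term measures one further numerical step applied to two nearby states and is controlled by the stability estimate \eqref{equ:bound-splitting-operator}, giving $\le \exp(M\lambda\Delta t/\epsilon)\,e_{n-1}$. The second term is exactly a single splitting error started from the exact solution at time $t_{n-1}$; applying the one-step lemma (which holds for an arbitrary $H^4$ starting datum, not only for $\psi_{\mathrm{in}}$) bounds it by $C\|S^{(n-1)\Delta t}\psi_{\mathrm{in}}\|_{H^4}\Delta t^3/\epsilon^3$. Together these yield the linear recursion
\[
  e_n \le \exp\!\Bigl(\tfrac{M\lambda\Delta t}{\epsilon}\Bigr)e_{n-1} + C\,\bigl\|S^{(n-1)\Delta t}\psi_{\mathrm{in}}\bigr\|_{H^4}\,\frac{\Delta t^3}{\epsilon^3}, \qquad e_0 = 0.
\]

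The second step is to make the coefficient $\|S^{(n-1)\Delta t}\psi_{\mathrm{in}}\|_{H^4}$ uniform along the trajectory. Here I would invoke the regularity furnished by \lemref{lem:regularity-NLSE} together with the $H^2$-flow bound of \lemref{lem:H2-norm-nonlinear-flow} and the standing hypothesis $\psi^{\epsilon}\in L^{\infty}([0,T];H^4)$, to obtain a time-uniform estimate $\sup_{0\le t\le T}\|S^t\psi_{\mathrm{in}}\|_{H^4} \le C(1+T/\epsilon)\|\psi_{\mathrm{in}}\|_{H^4}$ (or more crudely a constant multiple of $\|\psi_{\mathrm{in}}\|_{H^4}$). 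Substituting this and unrolling the recursion gives
\[
  e_n \le C\,\|\psi_{\mathrm{in}}\|_{H^4}\,\frac{\Delta t^3}{\epsilon^3}\sum_{k=0}^{n-1}\exp\!\Bigl(\tfrac{M\lambda k\Delta t}{\epsilon}\Bigr).
\]
Since $n\Delta t\le T$ and $\Delta t\in(0,\epsilon)$, expanding each stability factor to leading order, $\exp(M\lambda k\Delta t/\epsilon)\le 1 + Ck\Delta t/\epsilon$, and summing the resulting arithmetic-type series gives $\sum_{k=0}^{n-1}(1+Ck\Delta t/\epsilon)\le \tfrac{T}{\Delta t}(1+CT/\epsilon)$. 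Collecting factors produces one power of $T$ from the step count, the factor $(1+T/\epsilon)$ from the accumulated stability and regularity growth, and $\Delta t^2/\epsilon^3$ from the local order $\Delta t^3/\epsilon^3$ after losing one power of $\Delta t$ to the summation, which is precisely the claimed bound.

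The main obstacle is the accumulation of the nonlinear stability constants: the numerical flow $\mathcal{L}$ is only Lipschitz with the $\epsilon$-dependent exponential constant of \eqref{equ:bound-splitting-operator}, so a careless telescoping would multiply these factors at full strength and produce $\exp(M\lambda T/\epsilon)$ rather than the advertised polynomial factor $(1+T/\epsilon)$. Reconciling the two requires keeping the \emph{summation} structure of the fan argument, so that the one-step stability factors are added rather than compounded, and retaining only their leading-order contribution, which is legitimate in the regime $\Delta t\in(0,\epsilon)$ and keeps $C$ independent of $\epsilon$. A secondary but essential technical point is that the one-step lemma must be applied with the \emph{exact} solution $S^{(n-1)\Delta t}\psi_{\mathrm{in}}$ as initial datum and its $H^4$ regularity propagated uniformly in $n$; this is exactly where \lemref{lem:regularity-NLSE} and \lemref{lem:H2-norm-nonlinear-flow} do the decisive work, since without a time-uniform $H^4$ bound the local errors could not be summed into a clean global estimate.
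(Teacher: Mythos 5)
Your proposal is correct and follows essentially the same route as the paper: the recursion $e_n \le \exp(M\lambda\Delta t/\epsilon)\,e_{n-1} + C\|S^{(n-1)\Delta t}\psi_{\mathrm{in}}\|_{H^4}\Delta t^3/\epsilon^3$, once unrolled, is exactly the paper's telescoping (Lady Windermere's fan) sum $\sum_{j=0}^{n-1}\|\mathcal{L}^{n-j}S^{j\Delta t}\psi_{\mathrm{in}} - \mathcal{L}^{n-j-1}S^{(j+1)\Delta t}\psi_{\mathrm{in}}\|$, and both arguments then invoke the uniform $H^4$ bound on $S^t\psi_{\mathrm{in}}$, the stability estimate \eqref{equ:bound-splitting-operator}, and the same expansion $(\exp(M\lambda\Delta t/\epsilon))^j \le 1 + Cj\Delta t/\epsilon$ for $\Delta t < \epsilon$ before summing. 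The only quibble is cosmetic: the stability factors in the fan are compounded (each local error is multiplied by $\exp(M\lambda(n-j-1)\Delta t/\epsilon)$), not added; the polynomial factor $(1+T/\epsilon)$ comes from the leading-order expansion of these compounded factors, exactly as in the paper.
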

\begin{proof}
  Similar to the proof in~\cite{Besse2003SIAM,Stephane2001TSRDS}. The triangle inequality yields
  \begin{equation*}
    \|\mathcal{L}^n\psi_{\mathrm{in}} - S^{n\Delta t}\psi_{\mathrm{in}}\|
    \leq
    \sum_{j=0}^{n-1}\| \mathcal{L}^{n-j}S^{j\Delta t}\psi_{\mathrm{in}} - \mathcal{L}^{n-j-1}S^{(j+1)\Delta t}\psi_{\mathrm{in}} \|.
  \end{equation*}
  Due to $S^t$ being the Lie formula for all $t \leq T$ and $\psi_{\mathrm{in}} \in H^4$, $S^t\psi_{\mathrm{in}}$ belongs to $H^4$ and is uniformly bounded in this space, thus for all $j$ such that $j\Delta t\leq T$, we have
  $$\| \mathcal{L} S^{j\Delta t}\psi_{\mathrm{in}} - S^{(j+1)\Delta t}\psi_{\mathrm{in}} \|
  =
  \| (\mathcal{L} - S^{\Delta t})S^{j\Delta t}\psi_{\mathrm{in}} \|
  \leq
  C \| \psi_{\mathrm{in}} \|_{H^4} \frac{\Delta t^3}{\epsilon^3}.$$
  Combine with \eqref{equ:bound-splitting-operator} and we get
  \begin{align*}
    \| \mathcal{L}^n \psi_{\mathrm{in}} - S^{n\Delta t}\psi_{\mathrm{in}} \|
    &\leq
    \sum_{j=0}^{n-1}\Big(\exp\Big(\frac{M\lambda\Delta t}{\epsilon}\Big)\Big)^{n-j-1} \| (\mathcal{L} - S^{\Delta t})S^{j\Delta t}\psi_{\mathrm{in}} \|.
  \end{align*}
  Since $0 < \Delta t < \epsilon$, for all $j \geq 0$, we have
  \begin{equation*}
    \Big(\exp\Big(\frac{M\lambda\Delta t}{\epsilon}\Big)\Big)^{j} \leq \left( 1 + C_0\frac{\Delta t}{\epsilon} \right)^j \leq 1 + Cj\frac{\Delta t}{\epsilon}.
  \end{equation*}
  Consequently, we arrive at
  \begin{align*}
    &\| \mathcal{L}^n \psi_{\mathrm{in}} - S^{n\Delta t}\psi_{\mathrm{in}} \|
    \leq
    \sum_{j=0}^{n-1}\Big(\exp\Big(\frac{M\lambda\Delta t}{\epsilon}\Big)\Big)^{n-j-1}C \| \psi_{\mathrm{in}} \|_{H^4} \frac{\Delta t^3}{\epsilon^3} \\
    \leq&
    C \| \psi_{\mathrm{in}} \|_{H^4} \frac{\Delta t^3}{\epsilon^3} \sum_{j=0}^{n-1} \Big( 1 + C(n-j-1)\frac{\Delta t}{\epsilon}\Big)
    \leq
    CT \| \psi_{\mathrm{in}} \|_{H^4} \left(1 + \frac{T}{\epsilon}\right)\frac{\Delta t^2}{\epsilon^3}.
  \end{align*}
  It concludes the proof of this theorem.
\end{proof}

Next, we give the convergence of the full TS-FEM method.
Consider the problem
$$i\epsilon\partial_t\psi^{\epsilon} = \mathcal{L}_2\psi^{\epsilon}$$
with the initial condition $\psi_{\mathrm{in}}$ and the periodical boundary condition. The solution has the form
\begin{equation}
  \psi^{\epsilon}(\bx, t) = \exp\left(-\frac{it}{2\epsilon}\mathcal{L}_2\right)\psi_{\mathrm{in}}.
\end{equation}
If $\mathcal{L}_2$ consists of potential and nonlinear term, the regularity of $\psi^{\epsilon}(t, \bx)$ depends on the regularity of both the potential $v$ and $\psi_{\mathrm{in}}$, otherwise it only depends on $\psi_{\mathrm{in}}$.

Assume that the numerical solution $\psi^{\epsilon}_h$ is given by \eqref{equ:fully-TS-FEM} and $\psi^{\epsilon}(t_n) = S^{n\Delta t}\psi_{\mathrm{in}}$ is the solution of \eqref{equ:NLS_equ}. We write
\begin{equation}
  \psi^{\epsilon, n}_h - \psi^{\epsilon}(t_n)
  =
  {L}^n\psi_h^0 - S^{n\Delta t}\psi_{\mathrm{in}} =
  ({L}^n\psi_h^0 - \mathcal{L}^n\psi_{\mathrm{in}}) + (\mathcal{L}^{n}\psi_{\mathrm{in}} - S^{n\Delta t}\psi_{\mathrm{in}}).
  \label{equ:fully-error}
\end{equation}
The first term denotes the error attributable to the space discretization and the second term is the splitting error of temporal discretization.

We first estimate the spatial error accommodation from $t = 0$ to $t = \Delta t$,
\begin{align*}
  &\psi^{\epsilon, 1}_h - \psi^{\epsilon}(\Delta t)
  =
  {L}_2\left( \frac{\Delta t}{2},\cdot \right)\circ {L}_1(\Delta t) {L}_2\left( \frac{\Delta t}{2},\cdot \right) \circ \psi_h^0 - \mathcal{L}(\Delta t)\psi_{\mathrm{in}}.
\end{align*}
Let $\hat{\psi}_0 = \mathcal{L}_2(\frac{\Delta t}{2},\cdot)\circ\psi_{\mathrm{in}}$, and consider the problem
\begin{equation}
  i\epsilon\partial_t\psi^{\epsilon} = -\frac{\epsilon^2}{2}\Delta\psi^{\epsilon} + v\psi^{\epsilon}
  \label{equ:linear-schrodinger-equation}
\end{equation}
with the initial condition $\psi^{\epsilon}(t=0) = \hat{\psi}_0$ and the periodical boundary condition. The corresponding weak form is
\begin{align}
  i\epsilon(\partial_t(\psi^{\epsilon} - \psi^{\epsilon}_h), \phi^h) = \frac{\epsilon^2}{2}(\nabla(\psi^{\epsilon}-\psi^{\epsilon}_h), \nabla\phi^h) + (v(\psi^{\epsilon}-\psi^{\epsilon}_h), \phi^h), \quad \forall \phi^h \in V_h.
  \label{equ:error-weak-form}
\end{align}
Let $ \psi^{\epsilon} - \psi^{\epsilon}_h = (\psi^{\epsilon} - R_h\psi^{\epsilon}) + \theta$, where $\theta = R_h\psi^{\epsilon} - \psi^{\epsilon}_h$ and $R_h\psi^{\epsilon}$ denotes the Ritz projection. According to \eqref{equ:error-weak-form}, we get
\begin{equation}
  i\epsilon(\partial_t[(\psi^{\epsilon}-R_h\psi^{\epsilon})+\theta], \phi^h) = \frac{\epsilon^2}{2}(\nabla\theta,\nabla\phi^h) + (v(\psi^{\epsilon}-R_h\psi^{\epsilon}), \phi^h) + (v\theta,\phi^h).
\end{equation}
Take $\phi^h = \theta$ in the above equation,
\begin{equation*}
  i\epsilon(\partial_t\theta,\theta) = -i\epsilon(\partial_t(\psi^{\epsilon}-R_h\psi^{\epsilon}),\theta) + \frac{\epsilon^2}{2}\|\nabla\theta\|^2 + (v(\psi^{\epsilon}-R_h\psi^{\epsilon}), \theta) + (v\theta,\theta),
\end{equation*}
and we have
\begin{equation*}
  i\epsilon\mathrm{d}_t\|\theta\|^2 = i\epsilon(\partial_t\theta,\theta) + i\epsilon(\partial_t\bar{\theta}, \bar{\theta}) = 2i\epsilon \Re{(\partial_t({\psi^{\epsilon}}-R_h{\psi^{\epsilon}}), {\theta})} + 2i\Im{(v(\psi^{\epsilon}-R_h\psi^{\epsilon}), \theta)},
\end{equation*}
which induces
\begin{equation}
  \mathrm{d}_t\|\theta\| \leq 2\|\partial_t({\psi^{\epsilon}}-R_h{\psi^{\epsilon}})\| + \frac{2}{\epsilon}\|v\|_{\infty}\|\psi^{\epsilon}-R_h\psi^{\epsilon}\|.
\end{equation}
Integrating from $0$ to $t$ yields
\begin{equation}
  \|\theta(t)\|\leq \|\theta(0)\| + 2\int_0^t\|\partial_t({\psi^{\epsilon}}-R_h{\psi^{\epsilon}})\|\mathrm{d}t + \frac{2}{\epsilon}\|v\|_{\infty}\int_0^t\|\psi^{\epsilon}-R_h\psi^{\epsilon}\|\mathrm{d}t.
  \label{equ:error-evolution}
\end{equation}
Assume $\|\theta(0)\| = \|\hat{\psi}_{\mathrm{in}} - R_h\hat{\psi}_{\mathrm{in}}\| = \|{\psi}_{\mathrm{in}} - R_h{\psi}_{\mathrm{in}}\| = 0$.
Since $\|R_h\partial_t\psi^{\epsilon} - \partial_t\psi^{\epsilon}\| \leq Ch^2\|\partial_t\psi^{\epsilon}\|_{H^2}$, we have
\begin{equation}
  \|\theta(t)\|\leq Cth^2\|\partial_t\psi^{\epsilon}\|_{H^2} + \frac{Ch^2}{\epsilon}\int_0^t\|\psi^{\epsilon}\|_{H^2}\mathrm{d}s \leq C_{\lambda,\epsilon}th^2 + \frac{C th^2}{\epsilon^{3}} \leq CC_{\lambda,\epsilon}th^2,
  \label{equ:estimate-thetat}
\end{equation}
where $t \leq \Delta t$, and $C_{\lambda,\epsilon}$ is the leading order term with respect to $\epsilon^{-1}$.

Let $\hat{\psi}_{h,1}$ be the numerical solution of \eqref{equ:linear-schrodinger-equation} with $t = \Delta t$, we can obtain
\begin{align*}
  \|\psi^{\epsilon, 1}_h - \psi^{\epsilon}(\Delta t)\|
  &=
  \left\|\exp\left( -\frac{i\Delta t \mathcal{L}_2(\hat{\psi}_{h,1})}{2\epsilon} \right) \hat{\psi}_{h,1} - \exp\left( -\frac{i\Delta t \mathcal{L}_2(\hat{\psi}_{1})}{2\epsilon} \right)\hat{\psi}_{1}\right\|
  \\&\leq
  C\exp\left( \frac{M\lambda\Delta t}{2\epsilon} \right) \| \theta(t) \|,
\end{align*}
where
$\hat{\psi}_{1} = \exp\left( -\frac{i\epsilon\Delta t \mathcal{L}_1}{\epsilon} \right) \exp\left( -\frac{i\epsilon\Delta t \mathcal{L}_2}{2\epsilon} \right)\psi_{\mathrm{in}}$.
This indicates the spatial error accumulation in a one-time step.
We next estimate the error accumulation in both time and space from $t = 0$ to $T$.

\begin{theorem}
  Assume that $\psi_h^{\epsilon, n} = {L}^n \psi_{\mathrm{in}}$ and $\psi^{\epsilon}(n\Delta t) = S^{n\Delta t}\psi_{\mathrm{in}}$ are the numerical solution and exact solution of the NLSE. Moreover, assume $\partial_t\psi^{\epsilon}\in H^2$ for all $t\in[0, T]$ and $\psi_{\mathrm{in}} \in H^4$. Then for a given $T > 0$, there exists a constant $h_0$ such that $h \leq h_0$ and for all $\Delta t < \epsilon$ with $ n \Delta t \leq T$, and the $L^2$ error estimate satisfies
  \begin{equation}
    \|\psi^{\epsilon, n}_h - \psi^{\epsilon}(n\Delta t)\| \leq CC_{\lambda,\epsilon} h^2 + CT\left(1 + \frac{T}{\epsilon}\right) \frac{\Delta t^2}{\epsilon^3},
    \label{equ:TSFEM-full-convergence}
  \end{equation}
  where the constant $C$ is independent of $\epsilon$ and $T$.
  \label{thm:classical-TSFEM-convergence}
\end{theorem}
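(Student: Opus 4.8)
The plan is to start from the decomposition \eqref{equ:fully-error}, which already splits the total error into a spatial part $L^n\psi_h^0 - \mathcal{L}^n\psi_{\mathrm{in}}$ and a temporal splitting part $\mathcal{L}^n\psi_{\mathrm{in}} - S^{n\Delta t}\psi_{\mathrm{in}}$, and to bound each contribution by one of the two terms on the right-hand side of \eqref{equ:TSFEM-full-convergence}. The temporal part needs no new argument: it is exactly the quantity controlled in \thmref{thm:temporal-convergence}, so under the uniform $H^4$ bound on $\psi_{\mathrm{in}}$ it is already $\leq CT(1+T/\epsilon)\Delta t^2/\epsilon^3$. Thus the entire proof reduces to showing that the spatial part is $O(C_{\lambda,\epsilon}h^2)$.

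For the spatial part I would use a discrete telescoping (fan) argument. Writing $e_j := \mathcal{L}^j\psi_{\mathrm{in}}$ for the exactly-split reference states and inserting intermediate terms gives
\begin{equation*}
  L^{n}\psi_h^{0} - \mathcal{L}^{n}\psi_{\mathrm{in}}
  = \sum_{j=0}^{n-1} L^{\,n-1-j}\bigl(L e_j - \mathcal{L} e_j\bigr)
    + L^{n}\bigl(\psi_h^{0} - \psi_{\mathrm{in}}\bigr),
  \qquad e_j := \mathcal{L}^{j}\psi_{\mathrm{in}},
\end{equation*}
so that each summand isolates a single step of local spatial inconsistency $L e_j - \mathcal{L} e_j$, propagated forward by the discrete flow $L^{\,n-1-j}$. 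The final term is the amplified initial Ritz error; since $\psi_h^0 = R_h\psi_{\mathrm{in}}$, it is bounded by the stability constant times $\|R_h\psi_{\mathrm{in}}-\psi_{\mathrm{in}}\| \leq Ch^2\|\psi_{\mathrm{in}}\|_{H^2}$, hence of the same $O(h^2)$ order (and it is the quantity normalized to $0$ in the one-step analysis before \eqref{equ:estimate-thetat}).

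Two ingredients then close the estimate. First, each local term $\|L e_j - \mathcal{L} e_j\|$ is precisely the one-step spatial error analyzed in \eqref{equ:estimate-thetat}, namely $\leq CC_{\lambda,\epsilon}\Delta t\,h^2$ up to the factor $\exp(M\lambda\Delta t/2\epsilon)$; applying this at $e_j$ requires the reference trajectory to stay uniformly bounded in $H^4$ with $\partial_t$ bounded in $H^2$, which is supplied by \lemref{lem:regularity-NLSE} together with the $H^2$-growth control of the nonlinear sub-flow in \lemref{lem:H2-norm-nonlinear-flow}. Second, the propagator $L^{\,n-1-j}$ must be stable: by the Lipschitz bound \eqref{equ:bound-splitting-operator} for the nonlinear sub-step and the norm preservation of the eigendecomposition solve for $\mathcal{L}_1$, its Lipschitz constant is $\leq \exp(M\lambda(n-1-j)\Delta t/\epsilon)\leq \exp(M\lambda T/\epsilon)$, a quantity I would absorb into $C_{\lambda,\epsilon}$. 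Summing the $n$ terms, each of size $CC_{\lambda,\epsilon}\Delta t\,h^2$, yields $n\Delta t\cdot CC_{\lambda,\epsilon}h^2 \leq CT\,C_{\lambda,\epsilon}h^2$, which is of the claimed form $CC_{\lambda,\epsilon}h^2$ once the $T$ factor is absorbed.

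The step I expect to be the main obstacle is keeping the nonlinear Lipschitz constant $M$ genuinely uniform in $n$, which is what forces the mesh threshold $h_0$. Since $M$ for $F(\psi)=\lambda|\psi|^2\psi$ depends on the $L^\infty$ norms of the states, I must show that the discrete iterates $\psi_h^{\epsilon,n}$ stay uniformly bounded in $L^\infty$ on $[0,T]$. The natural route is an inductive bootstrap: assuming the $L^\infty$ bound at step $n$, the just-proved error estimate plus an inverse/interpolation inequality gives $\|\psi_h^{\epsilon,n}\|_\infty \lesssim \|\psi^\epsilon\|_{L^\infty([0,T];L^\infty)} + o(1)$ for $h\leq h_0$, closing the induction and confirming that $M$, and hence all stability constants in the fan argument, do not degrade as $n$ grows. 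The complementary regularity issue, that each intermediate state feeding the linear sub-step retains its $H^4/H^2$ regularity with $j$-independent constants, is handled by the exponential $H^2$-bounds of \lemref{lem:H2-norm-nonlinear-flow} and the global-in-time regularity of \lemref{lem:regularity-NLSE}; together these guarantee that the per-step estimate \eqref{equ:estimate-thetat} applies uniformly along the trajectory, completing the bound.
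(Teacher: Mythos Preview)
Your proposal is correct and follows essentially the same strategy as the paper: decompose via \eqref{equ:fully-error}, invoke \thmref{thm:temporal-convergence} for the splitting part, and control the spatial part by a telescoping sum of one-step $O(\Delta t\,h^2)$ consistency errors propagated by a Lipschitz-stable discrete flow. The only substantive difference is that the paper threads the Ritz projection through the telescope, writing the local discrepancy as $(LR_h - R_h\mathcal{L})\mathcal{L}^{j-1}\psi_{\mathrm{in}}$ together with a single endpoint term $\|(R_h-I)\mathcal{L}^n\psi_{\mathrm{in}}\|$; this makes the bound \eqref{equ:estimate-thetat} apply verbatim (both inputs already in $V_h$, so $\theta(0)=0$ is automatic) and avoids having to argue that the static $O(h^2)$ projection error hidden in your $Le_j-\mathcal{L}e_j$ does not accumulate over $n$ steps. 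Your treatment of the stability constant is in fact more explicit than the paper's: where you run an $L^\infty$ bootstrap via inverse inequalities to keep the nonlinear Lipschitz constant uniform, the paper simply posits an a~priori bound $a_L=\max_m\|L^mR_h\mathcal{L}^{n-m}\psi^{\epsilon}\|$ and defers its justification to an external reference.
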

\begin{proof}
  The error can be split into
  \begin{equation*}
    \psi^{\epsilon, n}_h - \psi^{\epsilon}(n\Delta t) = {L}^n \psi_h^0 - S^{n\Delta t}\psi_{\mathrm{in}} =
  ({L}^n \psi_h^0 - \mathcal{L}^n\psi_{\mathrm{in}}) + (\mathcal{L}^{n}\psi_{\mathrm{in}} - S^{n\Delta t}\psi_{\mathrm{in}}).
  \end{equation*}
  The first term on the right-hand side satisfies
  \begin{align*}
    \| L^{n} \psi_h^0 - \mathcal{L}^{n}\psi_{\mathrm{in}} \|
    &\leq \Big\|\sum_{j=1}^{n} {L}^{n-j}({L}R_h - R_h\mathcal{L})\mathcal{L}^{j-1}\psi_{\mathrm{in}}\Big\| + \|(R_h - I)\mathcal{L}^n \psi_{\mathrm{in}}\|.
  \end{align*}
  Due to $\mathcal{L}_1$ conserving the $H^2$ norm of the solution and \cref{lem:H2-norm-nonlinear-flow}, we have $\mathcal{L}^n\psi_{\mathrm{in}} \in H^2$ and $\|(R_h - I)\mathcal{L}^n \psi_{\mathrm{in}}\| \leq Ch^2\|\mathcal{L}^n\psi_{\mathrm{in}}\|_{H^2}$.
  Meanwhile,
  \begin{equation*}
    \| {L}\psi^{\epsilon}\| \leq \|{L}\psi^{\epsilon} - \mathcal{L}(\Delta t)\psi^{\epsilon}\| + \|\mathcal{L}(\Delta t)\psi^{\epsilon}\| \leq CC_{\lambda,\epsilon}\Delta th^2 + \|\psi^{\epsilon}\|.
  \end{equation*}
  Similar to the Theorem 3.1 in \cite{refId0}, we denote the bound of the numerical solution by
  $$\max\limits_{1\leq m\leq n} \| {L}^m R_h \mathcal{L}^{n-m}\psi^{\epsilon}\| \leq a_{L}.$$
  Recall \eqref{equ:error-evolution}-\eqref{equ:estimate-thetat}, owing to $\Delta t < \epsilon$, then there exists a constant $C$ independent of $\epsilon$ such that
  \begin{align*}
    &\left\| \sum_{j=1}^{n} {L}^{n-j}( {L}R_h - R_h\mathcal{L})\mathcal{L}^{j-1}\psi_{\mathrm{in}} \right\|
    \leq
    n\exp\left( CTa_{L}^2 \right)\max\limits_{1\leq j\leq n} \|({L}R_h - R_h\mathcal{L})\mathcal{L}^{j-1}\psi_{\mathrm{in}}\|
    \\ \leq&
    n\exp\left( CTa_{L}^2 \right)\exp\left(\frac{\lambda M\Delta t}{\epsilon}\right)CC_{\lambda,\epsilon}\Delta t h^2
    \leq
    \exp\left( CTa_{L}^2 \right)\exp\left(\frac{\lambda M\Delta t}{\epsilon}\right)CC_{\lambda,\epsilon}T h^2.
  \end{align*}
  Thus we arrive at
  \begin{equation*}
    \| {L}^n \psi_{\mathrm{in}} - \mathcal{L}^n\psi_{\mathrm{in}}\| \leq CC_{\lambda,\epsilon} h^2,
  \end{equation*}
  where $C$ is independent of $\epsilon$ but depends on $T$ and $\lambda$. Note that the order of $\|\psi^{\epsilon}\|_{H^2}$ with respect to $\epsilon^{-1}$ is lower than $C_{\lambda,\epsilon}$, and it is ignored in this results.

  Furthermore, combine with \Cref{thm:temporal-convergence}, and we get the desired estimate
  \begin{align*}
    \| \psi^{\epsilon, n}_h - \psi^{\epsilon}(n\Delta t) \|
    &\leq \| {L}^n\psi_{\mathrm{in}} - \mathcal{L}^n\psi_{\mathrm{in}} \| + \| \mathcal{L}^{n}\psi_{\mathrm{in}} - S^{n\Delta t}\psi_{\mathrm{in}} \|
    \\&\leq
    CC_{\lambda,\epsilon} h^2 + CT\left(1 + \frac{T}{\epsilon}\right) \frac{\Delta t^2}{\epsilon^3}.
  \end{align*}
  This declares the \eqref{equ:TSFEM-full-convergence}.
\end{proof}

\begin{remark}
    Take a further simplification
  \begin{equation*}
    \frac{C}{\epsilon^3}\left( 1 + \frac{T}{\epsilon} \right) \leq \frac{CT}{\epsilon^4}.
  \end{equation*}
  We temporarily use $\psi^{\epsilon, n}_H$ to denote the FEM solution on the coarse mesh with mesh size $H$, the counterpart result of \cref{thm:classical-TSFEM-convergence} on the coarse space is
  \begin{equation}
      \|\psi^{\epsilon, n}_H - \psi^{\epsilon}(n\Delta t)\| \leq CC_{\lambda,\epsilon} H^2 + \frac{CT^2}{\epsilon^4} \Delta t^2.
    \label{equ:TSFEM-full-convergence2}
  \end{equation}
  \label{rem:FEM-convergence-coarse-space}
\end{remark}

We have obtained the $L^2$ error estimate of the TS-FEM applied to the deterministic NLSE. Next, we will further assess the convergence analysis of the MsFEM in space, in conjunction with the qMC method. Note that the convergence analysis for the TS-FEM combined with the qMC method follows a similar pattern. Therefore, we will not discuss the convergence analysis of the TS-FEM in random space in this section.

\subsection{Convergence analysis of the TS-MsFEM for NLSE with random potentials}

In this part, we first present a convergence analysis of the TS-MsFEM for the NLSE with the deterministic potential. Secondly, by employing the qMC method in the random space, we further obtain the error estimate of the TS-MsFEM applied to the NLSEs with random potentials.

\subsubsection{TS-MsFEM for the deterministic NLSE}
For {\bf SI}, we solve the linear Schr\"odinger equation by the MsFEM and the corresponding convergence analysis has been given in \cite{Wu2022}. We therefore have the following estimate.
\begin{lemma}
  Let $\psi_{H}^{\epsilon, n} = {L}_{ms}^n\psi_{\mathrm{in}}$ be the numerical solution solved in $V_{ms}$ by {\bf SI}, and $\psi^{\epsilon}(t_n) = S^{n\Delta t}\psi_{\mathrm{in}}$ be the exact solution of the NLSE. Let $\Delta t \in (0, \epsilon)$, and assume $\partial_t\psi^{\epsilon}\in L^2$ for all $t\in(0, T]$, and $\psi_{\mathrm{in}} \in H^4$. We have the estimate
  \begin{equation}
    \|\psi^{\epsilon, n}_{H} - \psi^{\epsilon}(t_n)\| \leq \frac{CTH^2}{\epsilon^3} + \frac{CT^2}{\epsilon^4}\Delta t^2,
    \label{equ:estimate-MsFEM-linear}
  \end{equation}
  where the constant $C$ is independent of $\epsilon$.
  \label{lem:TS-MsFEM-convergence}
\end{lemma}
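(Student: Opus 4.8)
The plan is to follow the same template as the proof of \Cref{thm:classical-TSFEM-convergence}, but carried out in the multiscale space $V_{ms}$: I replace the fine-mesh space $V_h$ and projection $R_h$ by $V_{ms}$ and the associated Ritz projection $R_{ms}$ onto $V_{ms}$, the mesh size $h$ by the coarse size $H$, and the fully discrete propagator $L$ by $L_{ms}$. First I would split the global error as
\[
\psi^{\epsilon,n}_H - \psi^{\epsilon}(t_n) = L_{ms}^n\psi_{\mathrm{in}} - S^{n\Delta t}\psi_{\mathrm{in}} = \big(L_{ms}^n\psi_{\mathrm{in}} - \mathcal{L}^n\psi_{\mathrm{in}}\big) + \big(\mathcal{L}^n\psi_{\mathrm{in}} - S^{n\Delta t}\psi_{\mathrm{in}}\big),
\]
so that the second bracket is the pure temporal splitting error and the first is the spatial MsFEM error.

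The temporal term requires no new work: \Cref{thm:temporal-convergence} gives $\|\mathcal{L}^n\psi_{\mathrm{in}} - S^{n\Delta t}\psi_{\mathrm{in}}\| \leq CT\|\psi_{\mathrm{in}}\|_{H^4}(1+T/\epsilon)\Delta t^2/\epsilon^3$, and the simplification $\frac{1}{\epsilon^3}(1+T/\epsilon) \leq \frac{T}{\epsilon^4}$ recorded in \Cref{rem:FEM-convergence-coarse-space} reduces this to the stated $\frac{CT^2}{\epsilon^4}\Delta t^2$.

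For the spatial term I would exploit the specific structure of {\bf SI}: the nonlinear half-steps $\mathcal{L}_2$ act pointwise on the coefficient vector and carry no spatial discretization error, so the only spatial error is injected by the linear Schr\"odinger propagator $\mathcal{L}_1 = -\frac{\epsilon^2}{2}\Delta + v$, which in $V_{ms}$ is solved by the eigendecomposition method. I would write the telescoping identity
\[
L_{ms}^n\psi_{\mathrm{in}} - \mathcal{L}^n\psi_{\mathrm{in}} = \sum_{j=1}^{n} L_{ms}^{n-j}\big(L_{ms}R_{ms} - R_{ms}\mathcal{L}\big)\mathcal{L}^{j-1}\psi_{\mathrm{in}} + (R_{ms}-I)\mathcal{L}^n\psi_{\mathrm{in}},
\]
and estimate the two groups separately. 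The projection defect $\|(R_{ms}-I)\mathcal{L}^n\psi_{\mathrm{in}}\|$ is controlled by the quasi-interpolation bound $\|f - I_H f\|\leq H^2\|f\|_{H^2}$, provided $\mathcal{L}^n\psi_{\mathrm{in}}\in H^2$ with norm controlled uniformly in $n$; this regularity and its bound are supplied by \Cref{lem:H2-norm-nonlinear-flow} together with the fact that $\mathcal{L}_1$ conserves the $H^2$ norm. The one-step consistency factor $\|(L_{ms}R_{ms}-R_{ms}\mathcal{L})\mathcal{L}^{j-1}\psi_{\mathrm{in}}\|$ is precisely the MsFEM error of a single linear Schr\"odinger step; here I would invoke the $V_{ms}$ convergence estimate for the linear Schr\"odinger equation established in \cite{Wu2022}, which yields a one-step bound of order $\frac{H^2}{\epsilon}\Delta t$ times the appropriate $H^2$-regularity of the solution. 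Multiplying by the Lipschitz stability factor of $L_{ms}$ from \eqref{equ:bound-splitting-operator}, summing the $n\leq T/\Delta t$ terms (each geometric factor $\big(\exp(M\lambda\Delta t/\epsilon)\big)^{n-j}$ bounded uniformly since $\Delta t<\epsilon$), and collecting the $\epsilon$-powers gives the spatial contribution $\frac{CTH^2}{\epsilon^3}$.

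The main obstacle is the bookkeeping at the interface between the exactly integrated nonlinear flow and the MsFEM-discretized linear flow inside the telescoping sum: one must verify that conjugating the linear one-step defect by the nonlinear half-steps does not destroy the $H^2$ regularity needed to apply both the quasi-interpolation estimate and the \cite{Wu2022} bound. This is exactly where \Cref{lem:H2-norm-nonlinear-flow} is essential, since it shows the nonlinear flow amplifies the $H^2$ norm only by a controlled exponential factor $\exp(\lambda\tau\|\psi\|_\infty^2/\epsilon)$ that can be absorbed into $C$ over the finite horizon $[0,T]$. A secondary subtlety is tracking the exact power of $\epsilon^{-1}$: the leading $\epsilon^{-3}$ in the spatial term arises from one factor $\epsilon^{-1}$ in the linear-step MsFEM error and a further $\epsilon^{-2}$ from the $H^2$-regularity of $\psi^\epsilon$, matching the identification of $C_{\lambda,\epsilon}$ with leading order $T/\epsilon^3$ used in \Cref{rem:FEM-convergence-coarse-space}.
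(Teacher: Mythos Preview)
Your proposal is correct and follows essentially the same architecture as the paper's proof: split the global error into a pure temporal splitting part (handled by \Cref{thm:temporal-convergence} and the simplification in \Cref{rem:FEM-convergence-coarse-space}) and a spatial MsFEM part, then control the latter by combining the Lipschitz stability of the nonlinear half-steps \eqref{equ:bound-splitting-operator} with the linear Schr\"odinger MsFEM estimate from \cite{Wu2022}. The paper compresses your telescoping sum into a direct one-step statement $\|L_{ms}\psi_H^{\epsilon,n} - \mathcal{L}\psi^{\epsilon,n}\| \leq \exp(\lambda M\Delta t/\epsilon)\,CH^2/\epsilon^2$, using the quasi-interpolation $I_H$ rather than a Ritz projection $R_{ms}$, but the mechanism is the same.

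One point of divergence worth noting: you phrase the key regularity input as ``$H^2$-regularity of $\psi^{\epsilon}$'' and account for the $\epsilon^{-3}$ as ``one factor $\epsilon^{-1}$ in the linear-step MsFEM error and a further $\epsilon^{-2}$ from the $H^2$-regularity''. The paper instead invokes the \cite{Wu2022} bound in the specific form $\|\psi^{\epsilon}_H - \psi^{\epsilon}\| \leq \frac{CH^2}{\epsilon^2}\|\epsilon\partial_t\psi^{\epsilon}\|$, which depends only on the $L^2$ norm $\|\partial_t\psi^{\epsilon}\|$ (bounded by $C/\epsilon$ via \Cref{lem:regularity-NLSE}), not on any $H^2$ quantity. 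This is precisely the point of the remark immediately following the lemma: the MsFEM's advantage over coarse-mesh FEM is that it needs only $\|\partial_t\psi^{\epsilon}\|$, whereas the FEM argument of \Cref{thm:classical-TSFEM-convergence} requires $\|\partial_t\psi^{\epsilon}\|_{H^2}$. Your route still recovers the stated $CTH^2/\epsilon^3$, but it obscures the feature of the MsFEM analysis that the paper wants to emphasize.
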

\begin{proof}
  For the linear Schr\"odinger equation, the spatial error of multiscale solution and exact solution has the bound~\cite{Wu2022}
  \begin{align*}
    \|\psi^{\epsilon}_H - \psi^{\epsilon} \| \leq \frac{CH^2}{\epsilon^2}\|\epsilon\partial_t\psi^{\epsilon}\| \leq \frac{CH^2}{\epsilon}  \|\partial_{t}\psi_{\mathrm{in}}\|\exp\left( \frac{2\lambda t\| \psi^{\epsilon} \|_{\infty}^2}{\epsilon}\right).
  \end{align*}
  At the second step of {\bf SI}, we have
  \begin{equation*}
    \|\psi^{\epsilon}_H - \psi^{\epsilon} \| \leq \frac{CH^2}{\epsilon^2} \exp\left( \frac{2\lambda \Delta t \| \psi^{\epsilon} \|_{\infty}^2}{\epsilon}\right) \leq \frac{CH^2}{\epsilon^2}.
  \end{equation*}
  When the eigendocomposition method is applied, the solution can be solved exactly in time for linear problems.
  The accumulation of the spatial error at each time step satisfies
  \begin{align*}
    &\| {L}_{ms}\psi_{H}^{\epsilon, n} - \mathcal{L}\psi^{\epsilon, n} \|
    \leq
    \| {L}_{ms}\psi_{H}^{\epsilon, n} - \mathcal{L} I_H \psi^{\epsilon, n} \| + \| \mathcal{L} I_H \psi^{\epsilon, n} - \mathcal{L}\psi^{\epsilon, n} \|
    \\ \leq&
    \exp\left( \frac{\lambda M\Delta t}{2\epsilon} \right) \frac{CH^2}{\epsilon^2} + \exp\left( \frac{\lambda M\Delta t}{\epsilon} \right)\| I_H \psi^{\epsilon, n} - \psi^{\epsilon, n} \|
    \leq
    \exp\left( \frac{\lambda M\Delta t}{\epsilon} \right) \frac{CH^2}{\epsilon^2}.
  \end{align*}
   Meanwhile, by the Strang splitting method, repeat the procedures in \Cref{thm:temporal-convergence}, and we get the estimate as \eqref{equ:estimate-MsFEM-linear}.
\end{proof}

\begin{remark}
      In comparison to \cref{rem:FEM-convergence-coarse-space}, the MsFEM exhibits superior performance with respect to the dependence on $\epsilon$, as it requires only the bound $\|\partial_t\psi^{\epsilon}\|$. In contrast, the application of the classical FEM requires the bound of $\|\partial_t\psi^{\epsilon}\|_{H^2}$, which implies a stronger dependence on $\epsilon$. Consequently, the weaker dependence of MsFEM on $\epsilon$ demonstrates its superiority in effectively handling multiscale problems.
\end{remark}

\subsubsection{MsFEM for the NLSE with random potentials}
To carry out the convergence analysis for the qMC method, the regularity of the wave function with respect to random variables is required. The random potential is truncated by the $m$-order KL expansion, and we denote $\boldsymbol{\xi}(\omega) = (\xi_1(\omega), \cdots, \xi_m(\omega))^T$. Let $\boldsymbol{\nu} = (\nu_1, \cdots, \nu_m)$ be the multi-index with $\nu_j$ being the nonnegative integer, where $|\boldsymbol{\nu}| = \sum_{j=1}^m \nu_j$. Then $\partial^{\boldsymbol{\nu}}\psi^{\epsilon}_m$ denotes the mixed derivative of $\psi^{\epsilon}_m$ with respect to all random variables specified by the multi-index $\boldsymbol{\nu}$.

\begin{lemma}
  For any $\omega\in\Omega$ and multi-index $|\boldsymbol{\nu}| < \infty$, and for all $t \in (0, T]$, there exists a constant $C(T,\lambda,\epsilon, |\boldsymbol{\nu}|)$ depends on $T,\lambda,\epsilon, |\boldsymbol{\nu}|$ such that the partial derivative of $\psi^{\epsilon}_m(t,\bx,\omega)$ satisfies the priori estimate
  \begin{equation}
    \|\partial^{\boldsymbol{\nu}}\psi_m\|_{H^2} \leq C(T,\lambda,\epsilon, |\boldsymbol{\nu}|)\prod_{j}(\sqrt{\lambda_j}\|v_j\|_{H^2})^{\nu_j}.
  \end{equation}
  \label{lem:regualrity-psi-wrt-random-variables}
\end{lemma}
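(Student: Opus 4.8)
The plan is to argue by induction on the order $|\boldsymbol{\nu}|$ of the mixed derivative, exploiting the crucial fact that the parameterized potential $v_m$ is \emph{affine} in the random variables, so that $\partial_{\xi_j} v_m = \sqrt{\lambda_j}\,v_j$ and $\partial_{\xi_j}^2 v_m = 0$. First I would record the base case $|\boldsymbol{\nu}| = 0$, where the right-hand product is empty: the bound $\|\psi_m\|_{H^2}\leq C(T,\lambda,\epsilon,0)$ is exactly the a priori $H^2$ estimate guaranteed by the regularity hypothesis on $\psi^\epsilon_m$ (cf. \Cref{lem:regularity-NLSE} and \Cref{lem:H2-norm-nonlinear-flow}). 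A second preliminary observation is that, because the initial datum $\psi_{\mathrm{in}}$ is deterministic, $\partial^{\boldsymbol{\nu}}\psi_m|_{t=0} = 0$ for every $|\boldsymbol{\nu}|\geq 1$; this kills the homogeneous part of the Duhamel representation and is what allows the constant to carry no contribution from the initial layer.

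Next I would differentiate \eqref{equ:NLS_equ_parameterized} by $\partial^{\boldsymbol{\nu}}$ and set $w = \partial^{\boldsymbol{\nu}}\psi_m$. Using the multi-index Leibniz rule, the potential term contributes $v_m\,w + \sum_{j:\,\nu_j\geq 1}\nu_j\sqrt{\lambda_j}\,v_j\,\partial^{\boldsymbol{\nu}-e_j}\psi_m$, where $e_j$ is the $j$-th unit multi-index and only first-order derivatives of $v_m$ survive by affineness, while the cubic term $|\psi_m|^2\psi_m = \psi_m^2\bar\psi_m$ splits into the three ``diagonal'' pieces $2\lambda|\psi_m|^2 w + \lambda\psi_m^2\bar w$ plus a remainder $G$ that is a finite sum of products $(\partial^{\boldsymbol{\alpha}}\psi_m)(\partial^{\boldsymbol{\beta}}\psi_m)(\partial^{\boldsymbol{\gamma}}\bar\psi_m)$ with $\boldsymbol{\alpha}+\boldsymbol{\beta}+\boldsymbol{\gamma}=\boldsymbol{\nu}$ and each of $|\boldsymbol{\alpha}|,|\boldsymbol{\beta}|,|\boldsymbol{\gamma}|$ strictly less than $|\boldsymbol{\nu}|$. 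Thus $w$ solves a linear Schr\"odinger equation driven by the time-dependent potential $v_m + 2\lambda|\psi_m|^2$, the conjugate term $\lambda\psi_m^2\bar w$, and the source $G$ built entirely from strictly-lower-order derivatives.

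The estimate itself I would close through the Duhamel formula with the free propagator $U(\tau)=\exp(i\tau\epsilon\Delta/2)$, which is unitary on $H^2$; since $w(0)=0$, this gives $\|w(t)\|_{H^2}\leq \frac{1}{\epsilon}\int_0^t \|(v_m+2\lambda|\psi_m|^2)w + \lambda\psi_m^2\bar w + G\|_{H^2}\,ds$. Because $H^2(\mathcal{D})$ is a Banach algebra for $d\leq 3$, the potential and conjugate pieces are controlled by $C(\|v_m\|_{H^2}+\lambda\|\psi_m\|_{H^2}^2)\|w\|_{H^2}$, and Gronwall's inequality absorbs them into an exponential constant that is uniform in $m$, since $\sum_j\sqrt{\lambda_j}\|v_j\|_{H^2}<\infty$ (from \Cref{assump:random-potential-assumption} with $p\leq 1$) bounds $\|v_m\|_{H^2}$. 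The decisive step is the source term: applying the algebra property to each summand of $G$ and to $\sqrt{\lambda_j}v_j\partial^{\boldsymbol{\nu}-e_j}\psi_m$, and then inserting the induction hypothesis, every factor $\sqrt{\lambda_k}\|v_k\|_{H^2}$ multiplies with precisely matching exponents — from the potential term the single extra $\sqrt{\lambda_j}\|v_j\|_{H^2}$ restores $\boldsymbol{\nu}-e_j$ to $\boldsymbol{\nu}$, and from each cubic product the exponents $\boldsymbol{\alpha}+\boldsymbol{\beta}+\boldsymbol{\gamma}$ sum to $\boldsymbol{\nu}$ — so the prescribed product $\prod_j(\sqrt{\lambda_j}\|v_j\|_{H^2})^{\nu_j}$ emerges with a constant depending only on $T,\lambda,\epsilon,|\boldsymbol{\nu}|$.

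I expect the main obstacle to be the combinatorial bookkeeping of the cubic Leibniz expansion together with the verification that the product factors recombine with exactly the right exponents; the analytic ingredients (unitarity of the free flow on $H^2$, the algebra property, and Gronwall) are routine once the equation for $w$ and the vanishing initial datum are in place.
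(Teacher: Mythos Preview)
Your proposal is correct and follows the same inductive architecture as the paper's proof---exploit the affineness of $v_m$ in $\xi_j$, differentiate the equation, separate the top-order ``diagonal'' terms $2\lambda|\psi_m|^2w+\lambda\psi_m^2\bar w$ from a lower-order source, close by Gronwall, and induct on $|\boldsymbol{\nu}|$---but the analytic machinery you use is genuinely different. The paper works by the energy method: it takes inner products to derive differential inequalities for $\|\partial^{\boldsymbol{\nu}}\psi_m\|$, $\|\nabla\partial^{\boldsymbol{\nu}}\psi_m\|$, and $\|\nabla^2\partial^{\boldsymbol{\nu}}\psi_m\|$ separately, controlling the cubic products through explicit Gagliardo--Nirenberg and H\"older estimates in $L^4$ and $L^6$, and then assembles these into an $H^2$ bound. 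Your route via Duhamel with the free propagator (unitary on $H^2$) together with the Banach-algebra property of $H^2$ for $d\leq 3$ collapses all of that into a single line per term. Your argument is considerably shorter and avoids the dimension-dependent interpolation exponents; the paper's approach, in exchange, makes the dependence of the constants on the various norms of $\psi_m$ and $v_m$ more explicit and does not rely on the algebra property holding at the top Sobolev index. Both reach the same conclusion with constants of the same qualitative type.
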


The proof of this lemma is given in the appendix.

We are interested in the expectation of linear functionals of the numerical solution in applications of uncertainty quantification. We will estimate the expected value $\mathds{E}[\mathcal{G}(\psi_m^{\epsilon}(\cdot, \omega))]$ of the random variable $\mathcal{G}(\psi_m^{\epsilon}(\cdot, \omega))$. Let $\mathcal{G}(\cdot)$ be a continuous linear functional on $L^2(\mathcal{D})$, then there exists a constant $C_{\mathcal{G}}$ such that

\begin{equation*}
  |\mathcal{G}(u)| \leq C_{\mathcal{G}}\|u\|
\end{equation*}
for all $u\in L^2(\mathcal{D})$. Consider the integral
\begin{equation}
  I_m(F) = \int_{\boldsymbol{\xi}\in[0,1]^m}F(\boldsymbol{\xi})\mathrm{d}\boldsymbol{\xi},
  \label{equ:integrand}
\end{equation}
where $F(\boldsymbol{\xi}) = \mathcal{G}(\psi_m^{\epsilon}(\cdot, \boldsymbol{\xi}))$. To approximate this integral, both the MC and qMC can be used. In our methods, it is approximated over the unit cube by randomly shifted lattice rules
\begin{equation*}
  Q_{m,n}(\boldsymbol{\Delta}; F) = \frac 1N\sum_{i=1}^NF\left(frac\left(\frac{iz}{N} + \boldsymbol{\Delta}\right)\right),
\end{equation*}
where $z\in\mathds{N}^m$ is the generating vector and $\boldsymbol{\Delta}\in [0, 1]^m$. Here $N$ denotes the number of random samples.

\begin{lemma}
  For the integral~\eqref{equ:integrand}, given $m, N\in\mathds{N}$ with $N \leq 10^{30}$, weights $\gamma = (\gamma_{\mathbf{u}})_{\mathbf{u}\subset\mathds{N}}$, a randomly shifted lattice rule with $N$ points in $m$ dimensional random space could be constructed by a component-by-component such that for all $\alpha\in (\frac 12, 1]$
  \begin{equation*}
    \sqrt{\mathds{E}^{\boldsymbol{\Delta}}|I_m(F) - Q_{m,N}(\cdot; F)|} \leq 9C^*C_{\gamma,m}(\alpha)N^{-1/2\alpha},
  \end{equation*}
  where
  \begin{equation*}
    C_{\gamma,m}(\alpha) = \left(\sum_{\emptyset\neq\mathbf{u}\subseteq\{1:m\}}
    \gamma^{\alpha}_{\mathbf{u}}\prod_{j\in\mathbf{u}}\varrho(\alpha)\right)^{1/2\alpha}
    \left(\sum_{\mathbf{u}\subseteq\{1:m\}}\frac{(C(\boldsymbol{\nu}))^2}{\gamma_{\mathbf{u}}}
    \prod_{j\in\mathbf{u}}\lambda_j\|v_j\|_{H^2}^2\right)^{1/2}.
  \end{equation*}
  \label{lem:error-of-qmc}
\end{lemma}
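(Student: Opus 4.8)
The plan is to apply the established error theory for randomly shifted lattice rules in the weighted unanchored Sobolev space of dominating mixed smoothness, specialized to the integrand $F(\boldsymbol{\xi}) = \mathcal{G}(\psi_m^{\epsilon}(\cdot,\boldsymbol{\xi}))$. The root-mean-square error of such a rule factorizes as the product of the shift-averaged worst-case error $e_{m,N}^{\mathrm{sh}}$ and the norm $\|F\|_{\gamma}$ of the integrand in that space. Hence there are two independent tasks: (i) bound $e_{m,N}^{\mathrm{sh}}$ via the component-by-component (CBC) construction, and (ii) bound $\|F\|_{\gamma}$ using the regularity estimate of \lemref{lem:regualrity-psi-wrt-random-variables}. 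Combining the two factors and absorbing the functional constant $C_{\mathcal{G}}$ together with a numerical constant into $C^*$ then yields the asserted bound.

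First I would invoke the CBC existence result of Kuo--Schwab--Sloan type: for a randomly shifted lattice rule with $N$ points generated by the CBC algorithm with weights $\gamma = (\gamma_{\mathbf{u}})$, the shift-averaged worst-case error satisfies, for all $\alpha\in(\tfrac12,1]$,
\begin{equation*}
  e_{m,N}^{\mathrm{sh}} \leq \left(\frac{1}{\varphi(N)}\sum_{\emptyset\neq\mathbf{u}\subseteq\{1:m\}}\gamma_{\mathbf{u}}^{\alpha}\prod_{j\in\mathbf{u}}\varrho(\alpha)\right)^{1/(2\alpha)},
\end{equation*}
where $\varphi$ is the Euler totient function and $\varrho(\alpha)$ is the one-dimensional constant arising from the Bernoulli-kernel reproducing structure. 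Using the elementary lower bound $\varphi(N)\geq N/9$ valid for $N\leq 10^{30}$ (this is exactly where the numerical factor $9$ and the size restriction on $N$ enter) turns $\varphi(N)^{-1/(2\alpha)}$ into the $N^{-1/(2\alpha)}$ rate and produces the first factor of $C_{\gamma,m}(\alpha)$.

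Next I would control $\|F\|_{\gamma}$. Since $\mathcal{G}$ is a bounded linear functional with $|\mathcal{G}(u)|\leq C_{\mathcal{G}}\|u\|$, differentiation in the random variables commutes with $\mathcal{G}$, so $\partial^{\boldsymbol{\nu}}F = \mathcal{G}(\partial^{\boldsymbol{\nu}}\psi_m^{\epsilon})$ and therefore $|\partial^{\boldsymbol{\nu}}F|\leq C_{\mathcal{G}}\|\partial^{\boldsymbol{\nu}}\psi_m^{\epsilon}\|\leq C_{\mathcal{G}}\|\partial^{\boldsymbol{\nu}}\psi_m^{\epsilon}\|_{H^2}$. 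Feeding in \lemref{lem:regualrity-psi-wrt-random-variables} gives $|\partial^{\boldsymbol{\nu}}F|\leq C(\boldsymbol{\nu})\prod_j(\sqrt{\lambda_j}\|v_j\|_{H^2})^{\nu_j}$. Inserting this into the definition of the weighted norm, and noting that in the unanchored space each coordinate is either differentiated once or not at all so that the multi-index sum collapses to a subset sum over $\mathbf{u}\subseteq\{1:m\}$, yields
\begin{equation*}
  \|F\|_{\gamma}^2 \leq \sum_{\mathbf{u}\subseteq\{1:m\}}\frac{(C(\boldsymbol{\nu}))^2}{\gamma_{\mathbf{u}}}\prod_{j\in\mathbf{u}}\lambda_j\|v_j\|_{H^2}^2,
\end{equation*}
which is the second factor of $C_{\gamma,m}(\alpha)$.

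The main obstacle is choosing the weights $\gamma_{\mathbf{u}}$ so that $C_{\gamma,m}(\alpha)$ remains bounded uniformly in the dimension $m$: minimizing the product of the two factors over $\gamma$ is the standard variational step, and the resulting (SPOD/product) weights are finite precisely when the summability condition $\sum_j(\sqrt{\lambda_j}\|v_j\|_{H^2})^p<\infty$ for some $p\in(0,1]$ from \cref{assump:random-potential-assumption} is satisfied. This summability is exactly what pins down the admissible range $\alpha\in(\tfrac12,1]$ and hence the almost first-order rate $N^{-1/(2\alpha)}$. The delicate part is verifying this summability while tracking that the $T$-, $\lambda$- and $\epsilon$-dependence hidden inside $C(\boldsymbol{\nu})$ does not destroy the dimension-independence of the constant; once that is in place, everything else is routine bookkeeping within the lattice-rule framework.
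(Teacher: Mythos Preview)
Your proposal is correct and follows the same approach as the paper: the paper's proof simply cites the standard CBC lattice-rule error theory (Dick--Kuo--Sloan, Graham et al.) together with the regularity bound of \lemref{lem:regualrity-psi-wrt-random-variables}, identifying $C^* = \|\mathcal{G}\|$ and the explicit form of $\varrho(\alpha)$, which is exactly the factorization into shift-averaged worst-case error times $\|F\|_{\gamma}$ that you spell out. Your final paragraph on optimizing the weights for dimension-independence goes a step beyond what the lemma itself asserts (the stated bound holds for arbitrary weights $\gamma$), but this is harmless and anticipates the subsequent application.
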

\begin{proof}
  The proof of the lemma is the same as in~\cite{doi:10.1137/19M127389X}. Here $C(\boldsymbol{\nu}) = C(t,\lambda,\epsilon, |\boldsymbol{\nu}|)$ is calculated in \Cref{lem:regualrity-psi-wrt-random-variables}. And
  \begin{equation}
    \varrho(\alpha) = 2\left( \frac{\sqrt{2\pi}}{\pi^{2-2\eta_{*}(1-\eta_*)\eta_*}} \right)^{\alpha}
    \zeta\left( \alpha + \frac 12 \right),
  \end{equation}
  where $\eta_* = \frac {2\alpha - 1}{4\alpha}$, $\zeta(x)$ is the Riemann zeta function and $C^* = \| \mathcal{G} \|$. The details of these estimates can be found in~\cite{dick_kuo_sloan_2013,Graham2015}.
\end{proof}

Employing the qMC method, the estimate between the wave functions of \eqref{equ:NLS_equ} and the truncated NLSE~\eqref{equ:NLS_equ_parameterized} satisfies the following lemma.
\begin{lemma}
  Under the \cref{assump:random-potential-assumption}, there exists a constant $C$ such that
  \begin{equation}
    \sqrt{\mathds{E}^{\boldsymbol{\Delta}}[|\mathds{E}[\mathcal{G}(\psi^{\epsilon})] - Q_{m,N}[\mathcal{G}(\psi^{\epsilon}_m)]|^2]} \leq C\left(\frac{m^{-\chi}}{\epsilon} + C_{\gamma,m}N^{-r}\right),
  \end{equation}
  where $0 \le \chi \leq (\frac 12-\eta)\Theta-\frac 12$, $r = 1-\delta$ for $0 < \delta < \frac 12$. Note that the constant $C$ is independent of $m$ and $n$ but depends on $T$.
  \label{lem:solution-estimate-KLequ-orequ}
\end{lemma}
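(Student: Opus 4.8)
The plan is to split the total error into a \emph{dimension-truncation} contribution and a \emph{quasi-Monte Carlo quadrature} contribution and bound each separately. Writing $A = \mathds{E}[\mathcal{G}(\psi^{\epsilon})]$ and $B = \mathds{E}[\mathcal{G}(\psi^{\epsilon}_m)] = I_m(F)$ for the two deterministic, shift-independent integrals, and $Q = Q_{m,N}[\mathcal{G}(\psi^{\epsilon}_m)]$ for the randomly shifted lattice approximation, I would first apply the triangle inequality pointwise in $\boldsymbol{\Delta}$, $|A - Q| \le |A - B| + |B - Q|$, and then take the root-mean-square over the shift. Since $A - B$ carries no $\boldsymbol{\Delta}$-dependence, Minkowski's inequality in $L^2_{\boldsymbol{\Delta}}$ gives
\begin{equation*}
  \sqrt{\mathds{E}^{\boldsymbol{\Delta}}\big[|A - Q|^2\big]}
  \le |A - B| + \sqrt{\mathds{E}^{\boldsymbol{\Delta}}\big[|B - Q|^2\big]},
\end{equation*}
which isolates the two error sources cleanly.

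For the truncation term $|A - B|$, I would use linearity of $\mathcal{G}$ and of the expectation, $|A - B| = |\mathds{E}[\mathcal{G}(\psi^{\epsilon} - \psi^{\epsilon}_m)]| \le \mathds{E}[|\mathcal{G}(\psi^{\epsilon} - \psi^{\epsilon}_m)|]$, together with $|\mathcal{G}(u)| \le C_{\mathcal{G}}\|u\|$ for the continuous linear functional. Bounding $\|\psi^{\epsilon} - \psi^{\epsilon}_m\|$ by the earlier (unnumbered) lemma that estimates $\|\psi^{\epsilon}_m - \psi^{\epsilon}\|$, invoking $\|v - v_m\|_{\infty} \le C m^{-\chi}$ from \cref{assump:random-potential-assumption}, and using the uniform $L^\infty([0,T]\times\Omega; H^s)$ bounds on $\psi^\epsilon$ and $\psi^\epsilon_m$, yields $|A - B| \le C m^{-\chi}/\epsilon$ after taking the expectation. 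The admissible range $0 \le \chi \le (\tfrac12 - \eta)\Theta - \tfrac12$ is exactly what the KL tail produces: from $\lambda_j \le C j^{-\Theta}$, the embedding $\|v_j\|_\infty \le C\|v_j\|_{H^2}$ and $\|v_j\|_{H^2} \le C\lambda_j^{-\eta}$, the summand obeys $\lambda_j\|v_j\|_\infty^2 \lesssim \lambda_j^{1-2\eta} \lesssim j^{-\Theta(1-2\eta)}$, so $\|v-v_m\|_\infty^2 \lesssim \sum_{j>m} j^{-\Theta(1-2\eta)} \lesssim m^{-(\Theta(1-2\eta)-1)}$, giving precisely the exponent $(\tfrac12-\eta)\Theta - \tfrac12$.

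For the quadrature term I would invoke \cref{lem:error-of-qmc} directly with $F(\boldsymbol{\xi}) = \mathcal{G}(\psi_m^{\epsilon}(\cdot,\boldsymbol{\xi}))$, since $B = I_m(F)$ and $Q = Q_{m,N}(\cdot;F)$. Choosing the smoothness parameter through $\tfrac{1}{2\alpha} = r = 1 - \delta$, so that $\alpha = \tfrac{1}{2(1-\delta)} \in (\tfrac12,1)$ for $0 < \delta < \tfrac12$, gives $\sqrt{\mathds{E}^{\boldsymbol{\Delta}}[|B-Q|^2]} \le 9 C^\ast C_{\gamma,m}(\alpha) N^{-r}$, where the regularity constant $C(\boldsymbol{\nu})$ entering $C_{\gamma,m}$ is the one supplied by \cref{lem:regualrity-psi-wrt-random-variables}. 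Absorbing $9C^\ast$ and the $\alpha$-dependence into $C$ and writing $C_{\gamma,m}$ for $C_{\gamma,m}(\alpha)$, this is $\le C\,C_{\gamma,m} N^{-r}$. Adding the two bounds produces the claimed estimate, with the prefactor $C$ independent of $m$ and $N$.

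Each step is individually short because the two genuinely technical inputs, the $\boldsymbol{\nu}$-derivative a priori estimate and the lattice-rule error bound, are already packaged in \cref{lem:regualrity-psi-wrt-random-variables} and \cref{lem:error-of-qmc}. The one point needing real care is the Minkowski step that separates the deterministic truncation error from the random quadrature error; getting the correct root-mean-square splitting (rather than a naive $\sqrt{a+b}$) is what keeps the two rates additive and rules out cross terms. A secondary bookkeeping issue is checking that the weights $\gamma_{\mathbf u}$ feeding $C_{\gamma,m}$ can be chosen, via the $p$-summability in \cref{assump:random-potential-assumption}, so that $C_{\gamma,m}$ does not blow up with $m$; this is exactly the product-and-order-dependent weight construction underlying \cref{lem:error-of-qmc}, so no new argument is required.
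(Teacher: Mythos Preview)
Your proposal is correct and follows essentially the same route as the paper: split into a dimension-truncation term $|\mathds{E}[\mathcal{G}(\psi^{\epsilon})] - \mathds{E}[\mathcal{G}(\psi^{\epsilon}_m)]|$ bounded via the earlier $\|\psi^{\epsilon}-\psi^{\epsilon}_m\|$ lemma and \cref{assump:random-potential-assumption}, plus a qMC quadrature term handled by \cref{lem:error-of-qmc} with $\alpha = 1/(2-2\delta)$. Your use of Minkowski's inequality in $L^2_{\boldsymbol{\Delta}}$ is in fact slightly cleaner than the paper's squared-sum splitting, and your derivation of the admissible range for $\chi$ adds detail the paper leaves implicit.
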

\begin{proof}
  Since $\mathcal{G}$ is a linear functional, we have
  \begin{align*}
    |\mathds{E}[\mathcal{G}(\psi^{\epsilon})] - Q_{m,N}[\mathcal{G}(\psi^{\epsilon}_m)]| &\leq |\mathds{E}[\mathcal{G}(\psi^{\epsilon})] - I_m(\psi^{\epsilon})| + |I_m(\psi^{\epsilon}) - Q_{m,N}[\mathcal{G}(\psi^{\epsilon}_m)]| \\
    &= |\mathds{E}[\mathcal{G}(\psi^{\epsilon})] - \mathds{E}[\mathcal{G}(\psi_m^{\epsilon})]| + |I_m(\psi^{\epsilon}) - Q_{m,N}[\mathcal{G}(\psi^{\epsilon}_m)]|.
  \end{align*}
  The first term satisfies
  \begin{align*}
    |\mathds{E}[\mathcal{G}(\psi^{\epsilon})] - \mathds{E}[\mathcal{G}(\psi_m^{\epsilon})]| \leq \mathds{E}[|\mathcal{G}(\psi^{\epsilon}) -\mathcal{G}(\psi_m^{\epsilon})|]
    \leq C\frac{m^{-\chi}}{\epsilon},
  \end{align*}
  where $C$ depends on the time $T$.
  Let $\alpha = 1/(2-2\delta)$ for $0 < \delta < \frac 12$, according to \Cref{lem:error-of-qmc}, we then get
  \begin{align*}
    &\mathds{E}^{\boldsymbol{\Delta}}[|\mathds{E}[\mathcal{G}(\psi^{\epsilon})] - Q_{m,N}[\mathcal{G}(\psi^{\epsilon}_m)]|^2] \\
    \leq &
    \mathds{E}^{\boldsymbol{\Delta}}[|\mathds{E}[\mathcal{G}(\psi^{\epsilon})] - I_m(\psi^{\epsilon})|^2] + \mathds{E}^{\boldsymbol{\Delta}}[|I_m(\psi^{\epsilon}) - Q_{m,N}[\mathcal{G}(\psi^{\epsilon}_m)]|^2] \\
    \leq &C\frac{m^{-2\chi}}{\epsilon^2} + CC_{\gamma,m}^2N^{2-2\delta}.
  \end{align*}

\end{proof}

Employ the qMC method in the random space, for the numerical solution $\psi^{\epsilon,m}_H$ solved by MsFEM on the coarse mesh, and we have the following error estimate.
\begin{theorem}
  Let $\psi_{\mathrm{in}} \in H^4(\mathcal{D})$, $\psi^{\epsilon} \in L^{\infty}([0, T]; H^4(\mathcal{D})) \cap L^{1}([0, T]; H^2(\mathcal{D}))$, and parameterized potentials satisfy the \cref{assump:random-potential-assumption}. Consider $\mathds{E}[\mathcal{G}(\psi^{\epsilon}(t_n))]$ is approximated by $Q_{m,N}(\cdot; \mathcal{G}(\psi^{\epsilon,n}_{H,m}))$. Apply the random shifted lattice rule $Q_{m,N}$ to $\mathcal{G}(\psi^{\epsilon}(t_n))$. Then for any fixed $T > 0$, there exists a constant $H_0$ such that $H \leq H_0$ and for all $\Delta t < \epsilon$ with $n\Delta t \leq T$, we have the root-mean-square error as
  \begin{equation}
    \sqrt{\mathds{E}^{\boldsymbol{\Delta}}[|\mathds{E}[\mathcal{G}(\psi^{\epsilon}(t_n))] - Q_{m,N}[\mathcal{G}(\psi^{\epsilon,n}_{H,m})]|^2]} \leq C\left(\frac{H^2}{\epsilon^3} + \frac{\Delta t^2}{\epsilon^4} + \frac{m^{-\chi}}{\epsilon} + C_{\gamma,m}N^{-r}\right),
    \label{equ:globa-error-qMC-TSFEM}
  \end{equation}
  where $0 \le \chi \leq (\frac 12-\eta)\Theta-\frac 12$, and $r = 1-\delta$ for $0 < \delta < \frac 12$. Here $C$ is independent of $m$ and $N$ but depends on $\lambda$ and $T$, and $C_{\gamma,m}$ depends on $T$, $\lambda$ and $\epsilon$.
  \label{thm:L2-estimate-TS-MsFEM-qMC}
\end{theorem}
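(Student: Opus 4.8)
The plan is to reduce \thmref{thm:L2-estimate-TS-MsFEM-qMC} to the two error contributions already controlled: the combined truncation-plus-sampling error for the \emph{exact} solutions from \lemref{lem:solution-estimate-KLequ-orequ}, and the deterministic spatial-temporal discretization error of the TS-MsFEM from \lemref{lem:TS-MsFEM-convergence}. First I would insert the intermediate quantity $Q_{m,N}[\mathcal{G}(\psi^{\epsilon}_m(t_n))]$, the randomly shifted lattice rule applied to the \emph{exact} solution of the truncated problem \eqref{equ:NLS_equ_parameterized}, and split
\begin{equation*}
  \mathds{E}[\mathcal{G}(\psi^{\epsilon}(t_n))] - Q_{m,N}[\mathcal{G}(\psi^{\epsilon,n}_{H,m})]
  = \big(\mathds{E}[\mathcal{G}(\psi^{\epsilon}(t_n))] - Q_{m,N}[\mathcal{G}(\psi^{\epsilon}_m(t_n))]\big)
  + \big(Q_{m,N}[\mathcal{G}(\psi^{\epsilon}_m(t_n))] - Q_{m,N}[\mathcal{G}(\psi^{\epsilon,n}_{H,m})]\big).
\end{equation*}
Taking the $L^2(\boldsymbol{\Delta})$-norm and applying Minkowski's inequality decouples the two pieces, so it suffices to bound the root-mean-square of each separately.

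The first term is exactly the quantity estimated in \lemref{lem:solution-estimate-KLequ-orequ}, which I would quote directly to obtain a bound $C(m^{-\chi}/\epsilon + C_{\gamma,m}N^{-r})$; this supplies the last two terms of \eqref{equ:globa-error-qMC-TSFEM} together with the admissible ranges $0\le\chi\le(\tfrac12-\eta)\Theta-\tfrac12$ and $r=1-\delta$. For the second term I would exploit that the lattice rule $Q_{m,N}$ is an average over its $N$ shifted nodes $\boldsymbol{\xi}_i$ and that $\mathcal{G}$ is a continuous linear functional with $|\mathcal{G}(u)|\le C_{\mathcal{G}}\|u\|$, so that
\begin{equation*}
  \big| Q_{m,N}[\mathcal{G}(\psi^{\epsilon}_m(t_n))] - Q_{m,N}[\mathcal{G}(\psi^{\epsilon,n}_{H,m})] \big|
  \leq \frac{C_{\mathcal{G}}}{N}\sum_{i=1}^{N} \big\| \psi^{\epsilon}_m(t_n,\cdot,\boldsymbol{\xi}_i) - \psi^{\epsilon,n}_{H,m}(\cdot,\boldsymbol{\xi}_i)\big\|.
\end{equation*}
For each fixed node $\boldsymbol{\xi}_i$ the parameterized potential $v_m(\bx,\boldsymbol{\xi}_i)$ is deterministic, so every summand is precisely the TS-MsFEM error controlled by \lemref{lem:TS-MsFEM-convergence}, namely $C(H^2/\epsilon^3 + \Delta t^2/\epsilon^4)$.

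The crucial point, and the step I expect to demand the most care, is that this per-node bound must hold \emph{uniformly} in $\boldsymbol{\xi}_i$ with a constant independent of the sample; otherwise the averaging over $i$ would not collapse to a single deterministic bound. This uniformity is guaranteed by \cref{assump:potential} and \cref{assump:random-potential-assumption}: the potential is bounded in $L^{\infty}(\Omega;H^s)$ and the solution lies in $L^{\infty}([0,T]\times\Omega;H^4)$, so the constant $C_{\lambda,\epsilon}$ entering \lemref{lem:TS-MsFEM-convergence}, which is assembled from $\|\partial_t\psi^{\epsilon}\|$ and the potential norm through \lemref{lem:regularity-NLSE} and \lemref{lem:H2-norm-nonlinear-flow}, may be chosen uniform over $\Omega$. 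Granting this, the second term is bounded by $C(H^2/\epsilon^3 + \Delta t^2/\epsilon^4)$ independently of the random shift $\boldsymbol{\Delta}$, hence its root-mean-square obeys the same bound. Combining the two estimates via Minkowski's inequality then yields exactly \eqref{equ:globa-error-qMC-TSFEM}, completing the argument.
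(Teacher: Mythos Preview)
Your proposal is correct and follows essentially the same route as the paper: the same intermediate quantity $Q_{m,N}[\mathcal{G}(\psi^{\epsilon}_m(t_n))]$ is inserted, the first piece is handled by \lemref{lem:solution-estimate-KLequ-orequ}, and the second by the continuity of $\mathcal{G}$ together with the deterministic TS-MsFEM estimate \lemref{lem:TS-MsFEM-convergence}. You are in fact more explicit than the paper on two points it glosses over, namely the use of Minkowski's inequality for the root-mean-square and the uniformity in $\boldsymbol{\xi}_i$ of the discretization constant; both are genuine ingredients, so keeping them in your write-up is appropriate.
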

\begin{proof}
  We split the error \eqref{equ:globa-error-qMC-TSFEM} into
  \begin{align*}
    |\mathds{E}[\mathcal{G}(\psi^{\epsilon}(t_n))] - Q_{m,N}[\mathcal{G}(\psi^{\epsilon,n}_{H,m})]| \leq
    & |\mathds{E}[\mathcal{G}(\psi^{\epsilon}(t_n))] - Q_{m,N}[\mathcal{G}(\psi^{\epsilon}_m(t_n))]|
    \\&
    + |Q_{m,N}[\mathcal{G}(\psi^{\epsilon}_m(t_n))]] - Q_{m,N}[\mathcal{G}(\psi^{\epsilon,n}_{H,m})]|.
  \end{align*}
  The second term can be estimated by
  \begin{align*}
    |\mathcal{G}(\psi^{\epsilon}_m(t_n)) - \mathcal{G}(\psi^{\epsilon,n}_{H,m})|
    \leq C_{\mathcal{G}} \| \psi^{\epsilon}_m(t_n) - \psi^{\epsilon,n}_{H,m} \|
    \leq CC_{\mathcal{G}}\left(\frac{H^2}{\epsilon^3} + \frac{\Delta t^2}{\epsilon^4}\right),
  \end{align*}
  where the constant $C$ depends on $\lambda$ and $T$, and is independent of $m$ and $N$.
  Combine with \Cref{lem:solution-estimate-KLequ-orequ}, we get the \eqref{equ:globa-error-qMC-TSFEM}. This completes this proof.
\end{proof}

\begin{remark}
    \cref{thm:L2-estimate-TS-MsFEM-qMC} gives the $L^2$ estimate of TS-MsFEM for the NLSE with random potentials. For the employment of the TS-FEM, repeat the above procedures and we can get a similar result.
\end{remark}

In the proposed methods, when accounting for random potentials, constructing multiscale basis functions demands substantial computational cost as the number of samples grows. To improve the simulation efficiency, we propose a multiscale reduced basis method consisting of offline and online stages. In the offline stage, we utilize the proper orthogonal decomposition~(POD) method to derive a small set of multiscale POD bases. Using these multiscale POD bases, the computational cost of solving optimal problems in the online stage can be further reduced. Detailed information about this method can be found in Appendix~\ref{sec:reduced-MsFEM}.

\section{Numerical experiments}
In this part, we will present numerical experiments in both 1D and 2D physical space. The convergence rates of TS-FEM and TS-MsFEM are first verified. For the NLSE with the random potential, we compare the convergence rate in the random space. In addition, the delocalization of mass distribution due to disordered potentials and the cubic nonlinearity is investigated.

\subsection{Numerical accuracy of TS-FEMs}
Set $\psi_{\mathrm{in}}(x) = (10\pi)^{0.25}\exp(-20x^2)$ for the 1D case,
and $\psi_{\mathrm{in}}(x_1, x_2) = (10/\pi)^{0.25}\exp(-5(x_1-0.5)^2 - 5(x_2-0.5)^2)$ for the 2D case.
To begin with, we choose the harmonic potential $v(x) = 0.5x^2$, and verify the second-order accuracy of the TS-FEM with respect to the temporal step size $\Delta t$ and spatial mesh size $h$. Here we fix the terminal time $T = 1.0$, $\epsilon = \frac 1{16}$ and nonlinear parameter $\lambda = 0.1$. The reference solution $\psi^{\epsilon}_{\mathrm{ref}}$ is computed on the fine mesh with $h = \frac{2\pi}{2048}$ and $\Delta t$ = 1.0e-06. The $L^{2}$ absolute error and $H^1$ absolute error are recorded in~\Cref{tab:convergence-TS-FEM-space-time}.
\begin{table}[htbp]
  \centering
  \caption{Numerical convergence of TS-FEMs in space and time.}
  \begin{tabular}{||c|c|c|c|c|c|c||}
    \hline
     & $h$ & $\frac{2\pi}{128}$ & $\frac{2\pi}{256}$ & $\frac{2\pi}{512}$ & $\frac{2\pi}{1024}$ & order \\
    \hline
    \multirow{2}{*}{\bf SI} & $L^{2}$ error & 1.96e-02 & 5.22e-03 & 1.26e-03 & 2.54e-04 & 2.09 \\
     & $H^1$ error & 1.19e-01 & 3.36e-02 & 8.31e-03 & 1.68e-03 & 2.04 \\
    \hline
    \multirow{2}{*}{\bf SII} & $L^{2}$ error & 3.04e-02 & 8.07e-03 & 1.95e-03 & 3.92e-04 & 2.09 \\
     & $H^1$ error & 3.52e-01 & 9.95e-02 & 2.44e-02 & 4.92e-03 & 2.05 \\
    \hline
    \hline
     & $\Delta t$ & 4.0e-02 & 2.0e-02 & 1.0e-02 & 5.0e-03 & order \\
    \hline
    \multirow{2}{*}{\bf SI} & $L^{2}$ error & 4.53e-04 & 1.13e-04 & 2.81e-05 & 7.03e-06 & 2.00 \\
     & $H^1$ error & 2.09e-03 & 5.20e-04 & 1.30e-04 & 3.24e-05 & 2.00 \\
    \hline
    \multirow{2}{*}{\bf SII} & $L^{2}$ error & 7.16e-03 & 1.87e-03 & 4.71e-04 & 1.18e-04 & 1.98 \\
     & $H^1$ error & 1.12e-01 & 2.91e-02 & 7.26e-03 & 1.81e-03 & 1.99 \\
    \hline
  \end{tabular}
  \label{tab:convergence-TS-FEM-space-time}
\end{table}

For the 2D case, we employ the multiscale potential
\begin{equation}
  v(x_1, x_2) = \cos\left(x_1x_2+\frac{x_1}{\epsilon} + \frac{x_1x_2}{\epsilon^2}\right),
\end{equation}
over $\mathcal{D} = [0, 1]^2$ with $64\times 64$ spatial nodes. Here we set $\lambda = 1.0$ and multiscale coefficient $\epsilon = \frac 18$. We compare the numerical solution with the different $\Delta t$ for {\bf SI} and {\bf SII}. By the means of the numerical tests shown in~\Cref{fig:2d-TS-FEMs-different-time-step}, {\bf SI} allows a bigger time step size than {\bf SII}.
\begin{figure}[tbhp]
  \centering
  \subfloat[{\bf SI}, $\Delta t =$ 1.0e-2.]{\includegraphics[width=2.2in]{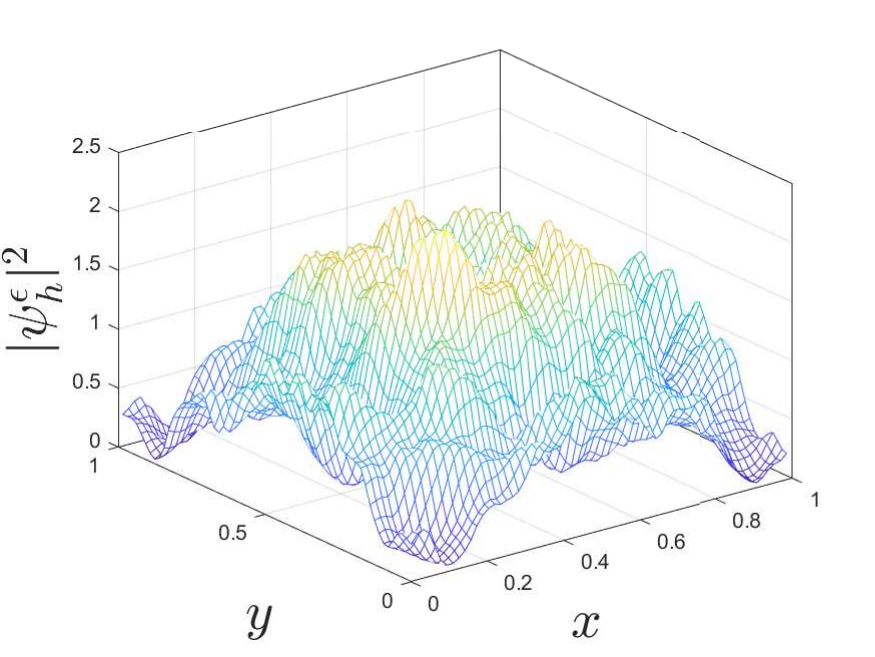}}
  \subfloat[{\bf SI}, $\Delta t =$ 1.0e-3.]{\includegraphics[width=2.2in]{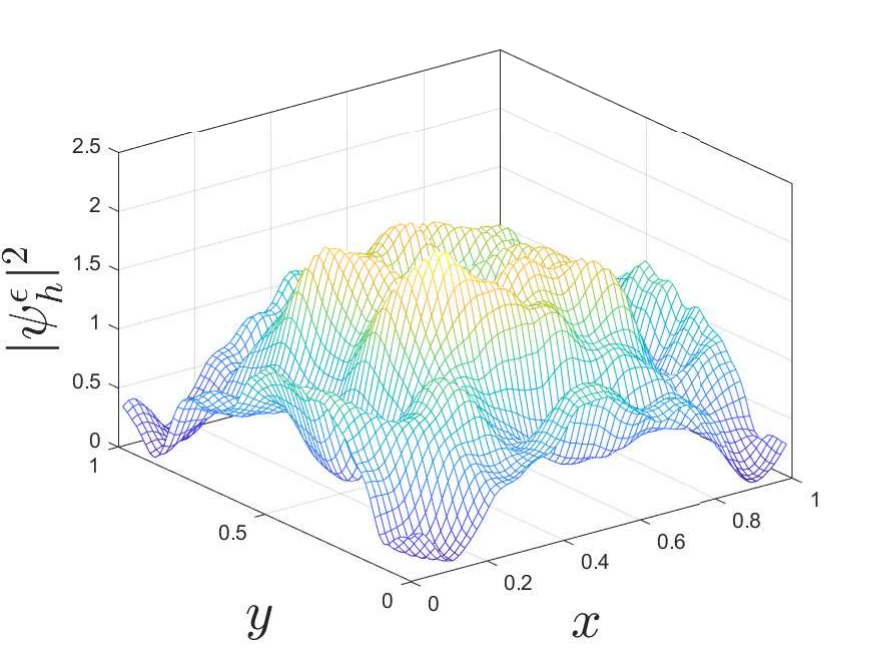}}
  \subfloat[{\bf SII}, $\Delta t =$ 1.0e-3.]{\includegraphics[width=2.2in]{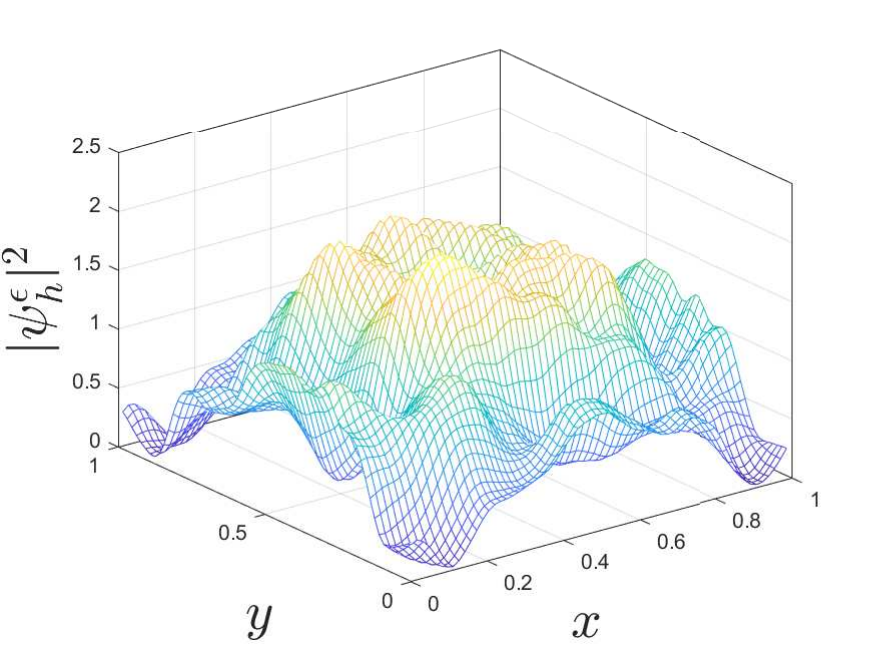}}
  \caption{Numerical solution computed by the two TS-FEMs with different $\Delta t$.}
  \label{fig:2d-TS-FEMs-different-time-step}
\end{figure}

\subsection{Numerical experiments of TS-MsFEMs}
In this study, we consider two forms of the multiscale solution: $\psi^{\epsilon}_H$ on the coarse mesh and $\psi^{\epsilon}_{H,h}$ on the fine mesh. We begin by employing the harmonic potential and varying the values of $H$. We then record the error between the numerical solution and the reference solution in~\Cref{tab:convergence-MsFEM-1D}. The simulation parameters used are: $\lambda = 0.1$, $\epsilon = \frac 1{16}$, $T = 1.0$, $\Delta t =$1.0e-03, and a fine mesh size of $h = \frac{2\pi}{4096}$. Our results show that {\bf SI} achieves a second-order convergence rate in both the coarse and fine spaces. Additionally, superconvergence is exhibited in the coarse space for {\bf SII}.

\begin{table}[htbp]
  \caption{Numerical convergence rate of the TS-MsFEMs for the NLSE with harmonic potential in space.}
  \begin{tabular}{||c|c|c|c|c|c||}
    \hline
    & $H$ & $\|\psi_{H,h}^{\epsilon} - \psi_{\mathrm{ref}}^{\epsilon}\|$ & $\|\psi_{H,h}^{\epsilon} - \psi_{\mathrm{ref}}^{\epsilon}\|_{H^1}$ & $\|\psi_H^{\epsilon} - \psi_{\mathrm{ref}}^{\epsilon}\|$ & $\|\psi_{H}^{\epsilon} - \psi_{\mathrm{ref}}^{\epsilon}\|_{H^1}$ \\
    \hline
    \multirow{5}{*}{\bf SI} & $\frac{2\pi}{2048}$ & 4.95e-05 & 4.69e-04 & 3.47e-05 & 3.31e-04 \\
    & $\frac{2\pi}{1024}$ & 1.68e-04 & 1.60e-03 & 1.18e-04 & 1.13e-03 \\
    & $\frac{2\pi}{512}$ & 6.44e-04 & 6.11e-03 & 4.52e-04 & 4.32e-03 \\
    & $\frac{2\pi}{256}$ & 2.56e-03 & 2.43e-02 & 1.80e-03 & 1.72e-02 \\
    \cline{2-6}
    & order & 1.90 & 1.90 & 1.90 & 1.90 \\
    \hline
    \multirow{5}{*}{\bf SII} & $\frac{2\pi}{2048}$ & 1.79e-05 & 1.73e-04 & 5.43e-12 & 1.88e-10 \\
    & $\frac{2\pi}{1024}$ & 6.10e-05 & 5.86e-04 & 7.85e-11 & 1.63e-09 \\
    & $\frac{2\pi}{512}$ & 2.33e-04 & 2.24e-03 & 5.68e-09 & 1.02e-07 \\
    & $\frac{2\pi}{256}$ & 9.24e-04 & 8.89e-03 & 4.49e-07 & 8.24e-06 \\
    \cline{2-6}
    & order & 1.90 & 1.90 & 5.52 & 5.22 \\
    \hline
  \end{tabular}
  \label{tab:convergence-MsFEM-1D}
\end{table}

Meanwhile, to demonstrate the advantages of Option 1, we examine the example of a discontinuous potential, as shown in~\Cref{fig:convergence-MsFEM-discontinuous}. We observe that {\bf SI} maintains its second-order spatial convergence rate, whereas the convergence rate of {\bf SII} deteriorates.

\begin{figure}[htbp]
  \centering
  \subfloat[$v(\bx)$.]{\includegraphics[width=2.2in]{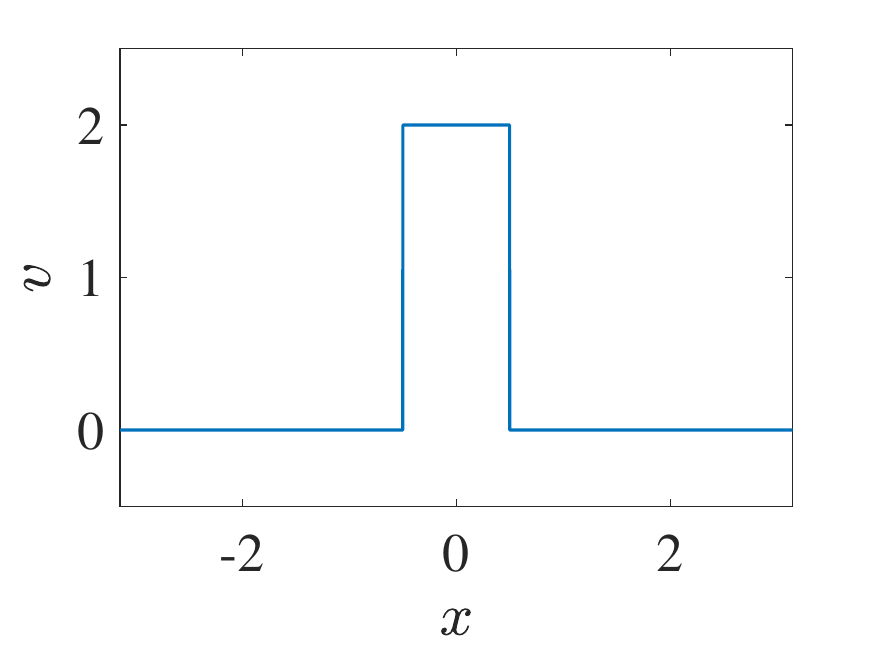}}
  \subfloat[{\bf SI}.]{\includegraphics[width=2.2in]{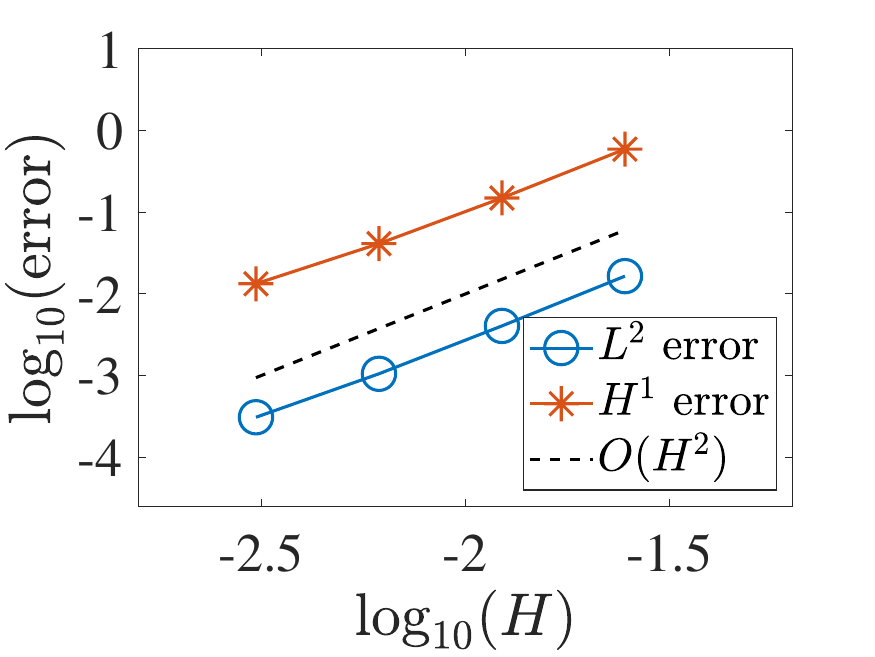}}
  \subfloat[{\bf SII}.]{\includegraphics[width=2.2in]{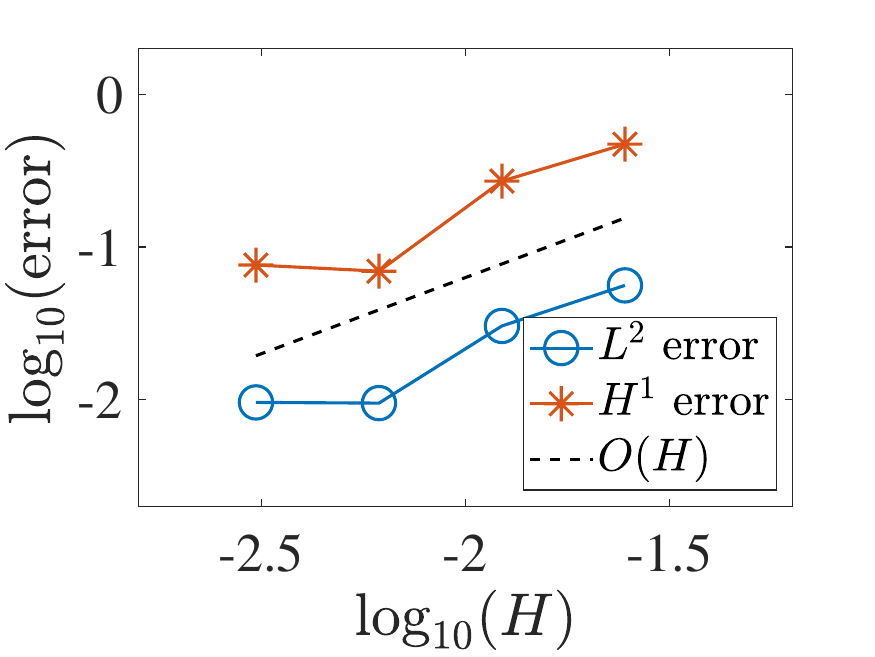}}
  \caption{Numerical convergence rate of {\bf SI} and {\bf SII} for the discontinuous potential. In the plots, the $L^2$ error and $H^1$ error on the coarse mesh are depicted.}
  \label{fig:convergence-MsFEM-discontinuous}
\end{figure}

Furthermore, we consider the small semiclassical constant $\epsilon = \frac 1{128}$ and the discontinuous potential as in \Cref{fig:convergence-MsFEM-discontinuous}(a). As shown in \Cref{fig:small-epsilon}, better approximations are provided by the MsFEM in the physical space.
\begin{figure}[htbp]
    \centering
    \subfloat[{\bf SI}.]{\includegraphics[width=2.5in]{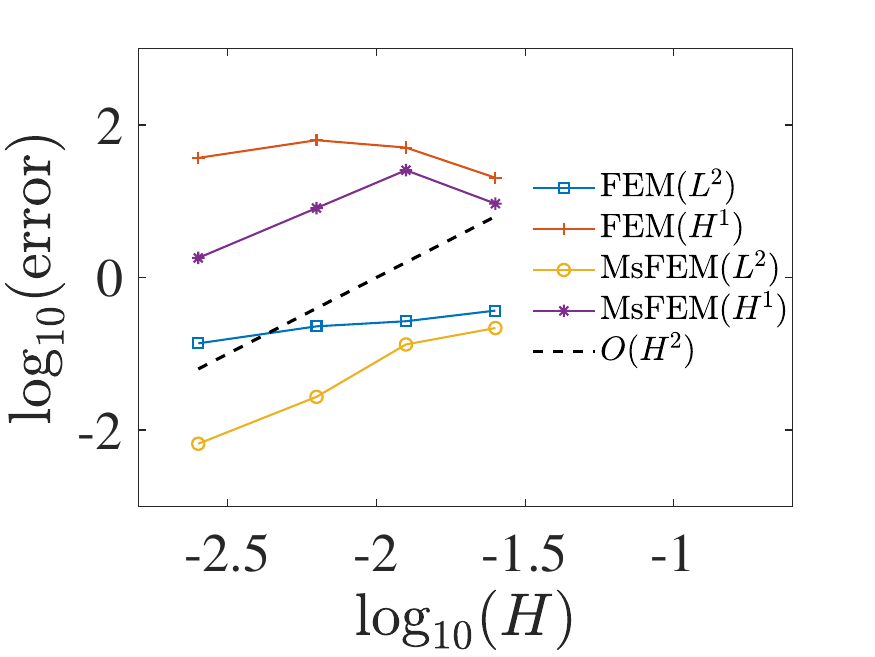}}
    \subfloat[{\bf SII}.]{\includegraphics[width=2.5in]{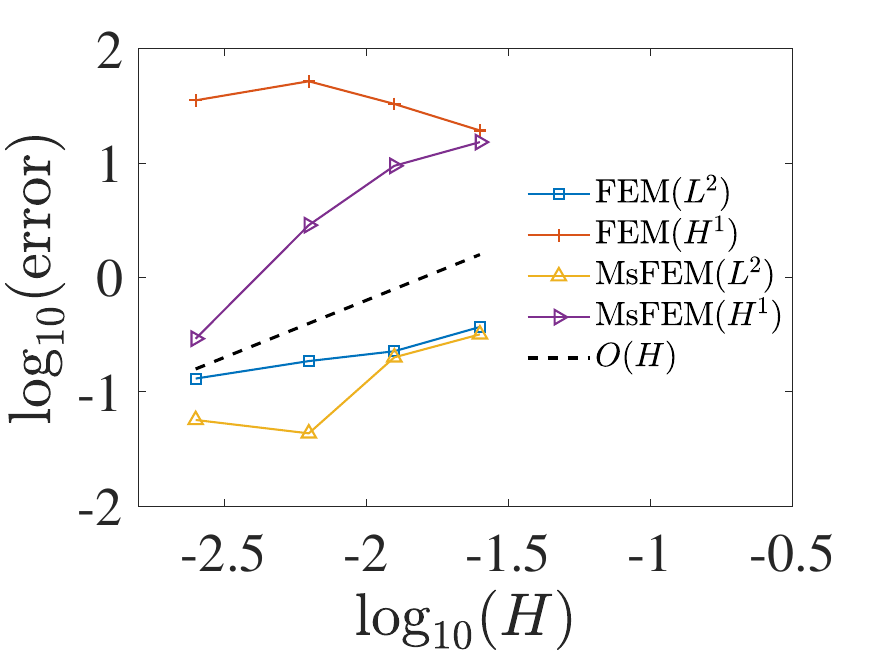}}
    \caption{The convergence rates of FEM and MsFEM for the NLSE with the discontinuous potential and semiclassical constant $\epsilon = \frac{1}{128}$.}
    \label{fig:small-epsilon}
\end{figure}

For the 2D case, we consider the discontinuous checkboard potential
\begin{equation*}
  v_2 = \left\{\begin{aligned}
    &\left(\cos\left(2\pi\frac{x_1}{\epsilon_2}\right) + 1\right)\left(\cos\left(2\pi\frac{x_2}{\epsilon_2}\right) + 1\right), &[0, 0.5]^2 \cup [0.5, 1]^2,\\
    &\left(\cos\left(2\pi\frac{x_1}{\epsilon_1}\right) + 1\right)\left(\cos\left(2\pi\frac{x_2}{\epsilon_1}\right) + 1\right), &\text{otherwise},
  \end{aligned}\right.
\end{equation*}
where $v = v_1 + v_2$ with $v_1 = |x_1-0.5|^2 + |x_2-0.5|^2$, $\epsilon_1 = \frac 18$ and $\epsilon_2 = \frac 16$. In the simulations, we set $h = \frac{1}{128}$, $\epsilon = \frac 1{4}$, $\lambda = 1.0$, $\Delta t = $1.0e-04 and $T = 1.0$. We employ {\bf SI}~(\cref{fig:2D-results-checkboard-SI}) and {\bf SII}~(\cref{fig:2D-results-checkboard-SIII}) for time evolution. We vary the coarse mesh size with $H = 4h$ and $H = 8h$ of the MsFEM and present the corresponding spatial error distribution. Here the reference solution is obtained using the FEM with a mesh size of $h$. In both \cref{fig:2D-results-checkboard-SI} and \cref{fig:2D-results-checkboard-SIII},
we observe a significant error when the MsFEM is used with a mesh size ratio of $H = 8h$. With the mesh being refined, the smaller error distribution in space can be obtained for \textbf{SI}. Hence this simulation demonstrates the superior performance of {\bf SI} when dealing with discontinuous potentials.

\begin{figure}[htbp]
    \centering
    \includegraphics[width=2.2in]{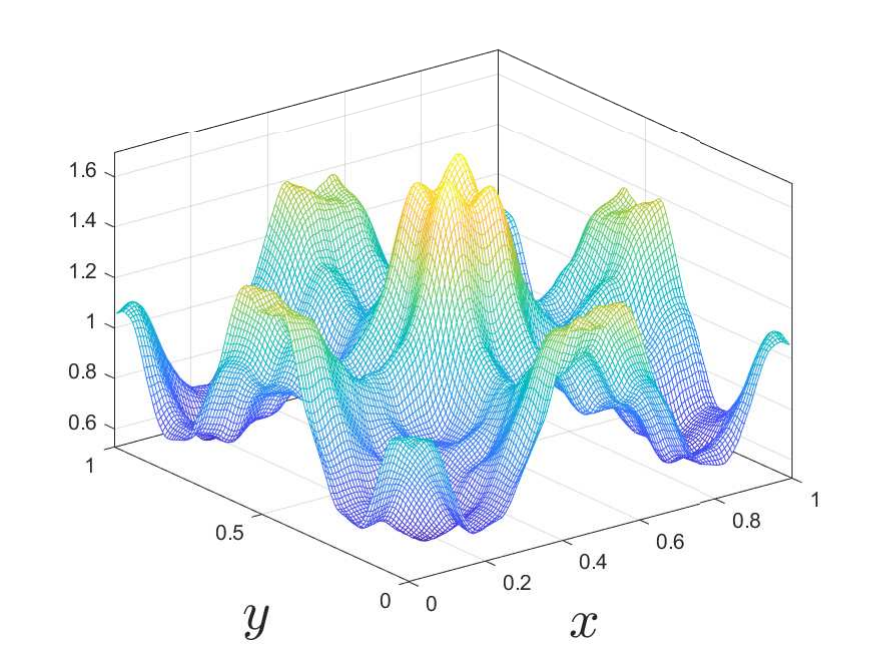}
    \includegraphics[width=2.2in]{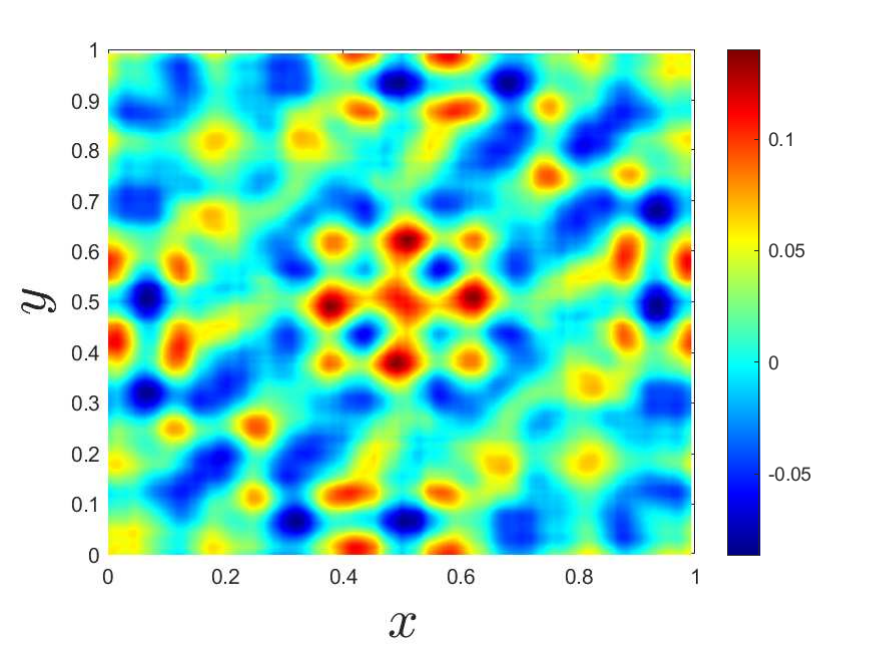}
    \includegraphics[width=2.2in]{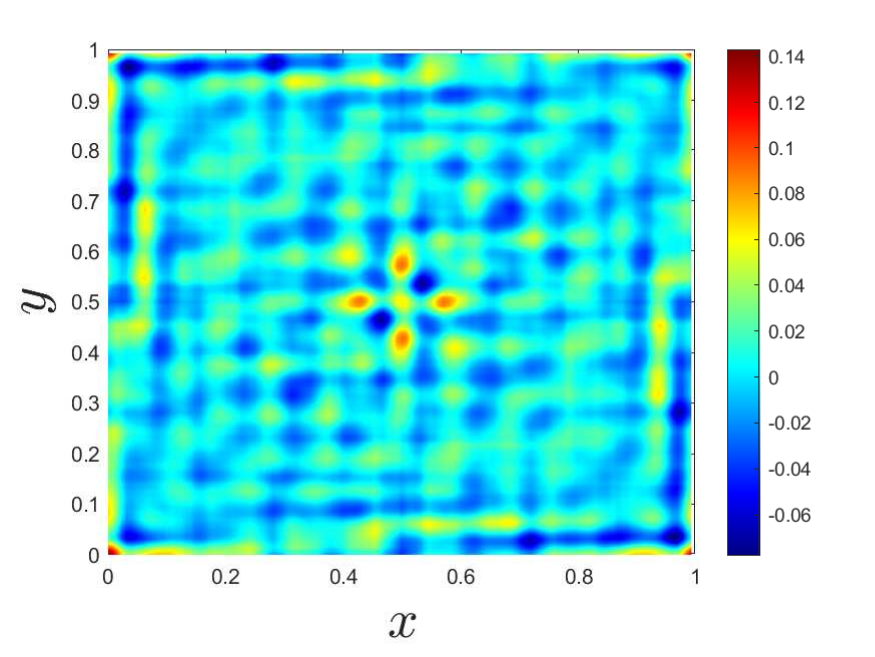}
    \caption{Reference solution (FEM) and the spatial error distribution computed by {\bf SI}, in which the MsFEM is used with $H = 8h$ and $H = 4h$.}
    \label{fig:2D-results-checkboard-SI}
\end{figure}
\begin{figure}[htbp]
    \centering
    \includegraphics[width=2.2in]{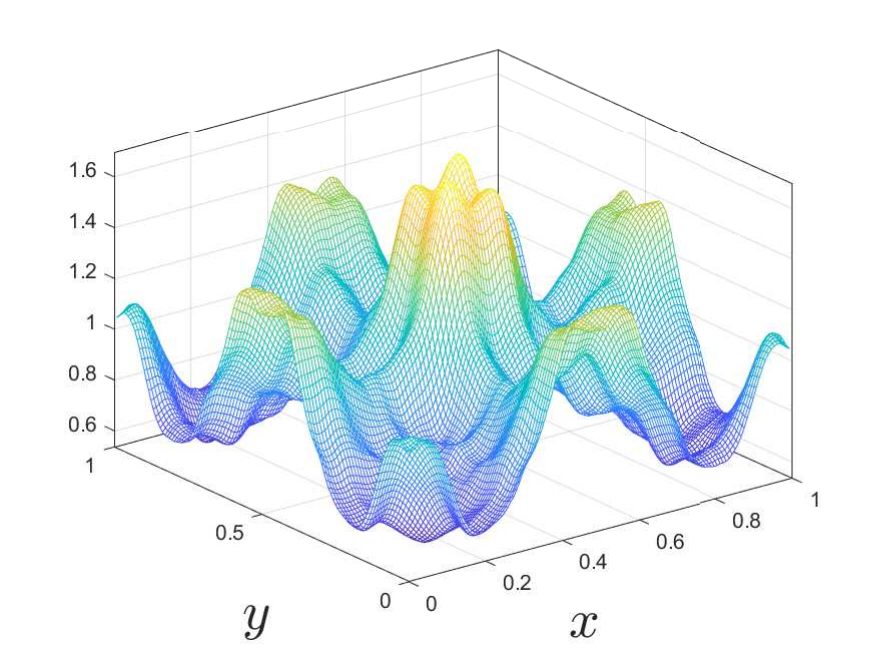}
    \includegraphics[width=2.2in]{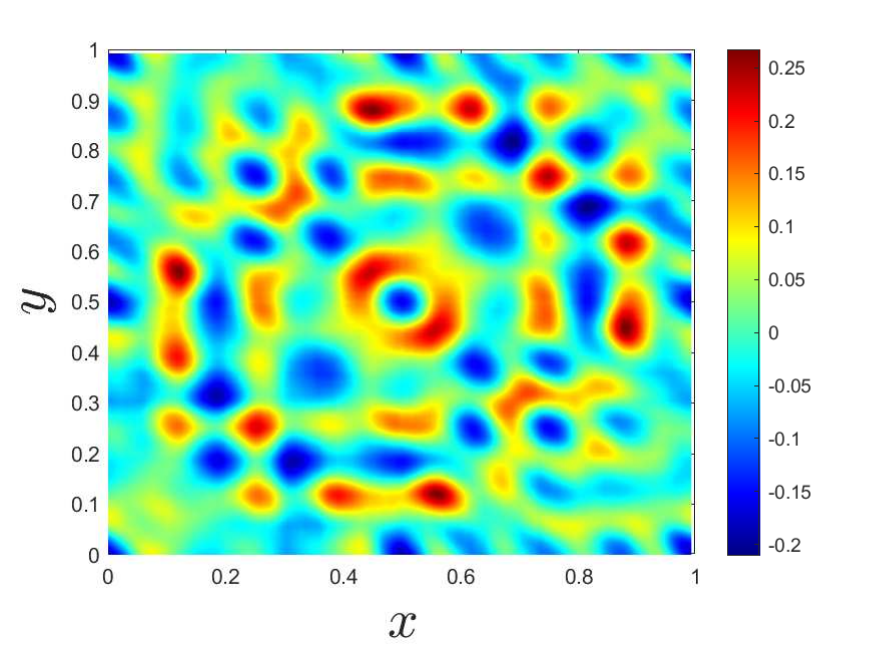}
    \includegraphics[width=2.2in]{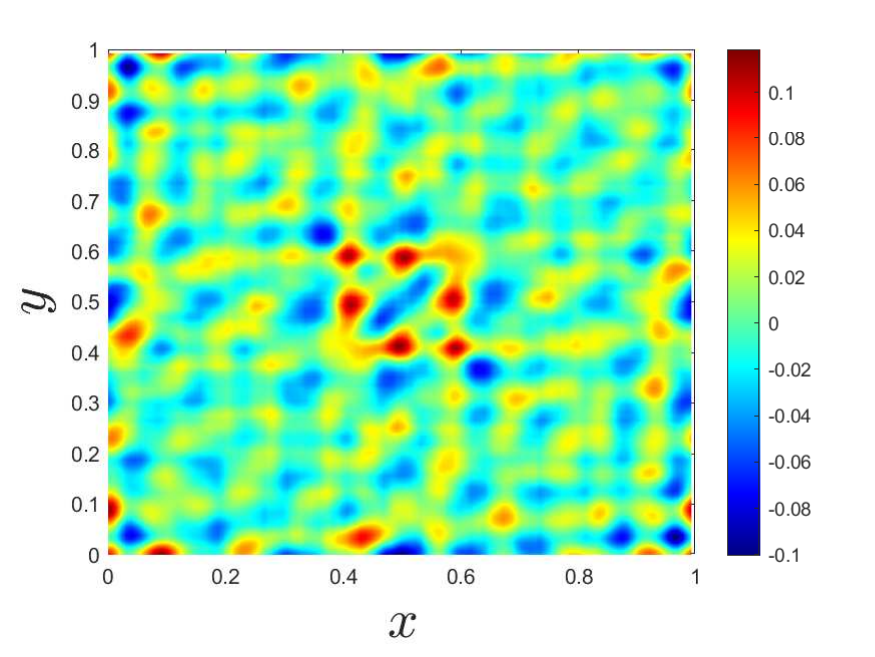}
    \caption{Reference solution (FEM) and the spatial error distribution computed by {\bf SII}, in which the MsFEM is used with $H = 8h$ and $H = 4h$.}
    \label{fig:2D-results-checkboard-SIII}
\end{figure}

\subsection{Numerical simulations of NLSE with random potentials}
For the 1D case, we consider the random potential
\begin{equation}
  v(x, \omega) = \sigma\sum_{j=1}^m\sin(jx)\frac 1{j^{\beta}}\xi_j(\omega),
  \label{equ:1D-random-potential}
\end{equation}
where $\sigma$ controls the strength of randomness, and $\xi_j(\omega)$'s are mean-zero and i.i.d random variables uniformly distributed in $[-\sqrt{3}, \sqrt{3}]$. It is extended to 2D as
\begin{equation}
  v(x_1, x_2, \omega) = \sigma\sum_{j=1}^m\sin(jx_1)\sin(jx_2)\frac 1{j^{\beta}}\xi_j(\omega).
  \label{equ:2D-random-potential}
\end{equation}
For comparison, we employ the MC method and qMC method to generate the samples $\xi_j(\omega)$ in the simulations. And
we measure the states of the system by the expectation of mass density
\begin{equation*}
  \mathds{E}(|\psi^{\epsilon}_{H,h}|^2) = \frac {1}{N}\sum_i|\psi^{\epsilon}_{H,h}(\omega_i)|^2,
\end{equation*}
where $N$ denotes the number of MC or qMC samples. To observe the evolution in the mass distribution of the system, we introduce the definition
\begin{equation}
  A(t) = \mathds{E}\left( \int_{\mathcal{D}}|\bx|^2|\psi^{\epsilon}|^2\mathrm{d}\bx \right),
\end{equation}
which is extensively used to indicate the Anderson localization of the Schr\"odinger equation with random potentials.

\subsubsection{Comparison of FEM and MsFEM}\label{subsec:random-test1}
We set $\sigma = 1.0$, $\beta = 0$ and $m = 5$ in \eqref{equ:1D-random-potential}, and the number of qMC samples to be 500. The multiscale parameter is $\epsilon = \frac 18$, and the computational domain is $\mathcal{D} = [-2, 2]$. For the TS-FEMs, the solution is computed on the fine mesh with $h = \frac{2\pi}{600}$, and we set $H = 6h$ for the TS-MsFEMs. The terminal time is set to be $T = 10$. As shown in~\Cref{fig:FEM-MsFEM-random-potential-qMC500}, we show the evolution of $A(t)$ and $\mathds{E}(|\psi^{\epsilon}_{H,h}|^2)$ at $T = 10$. The localization of linear Schr\"odinger equation and weak delocalization of NLSE can be observed by both $A(t)$ and $\mathds{E}(|\psi^{\epsilon}_{H,h}|^2)$.
\begin{figure}[htbp]
  \centering
  \subfloat[Linear case.]{\includegraphics[width=1.6in]{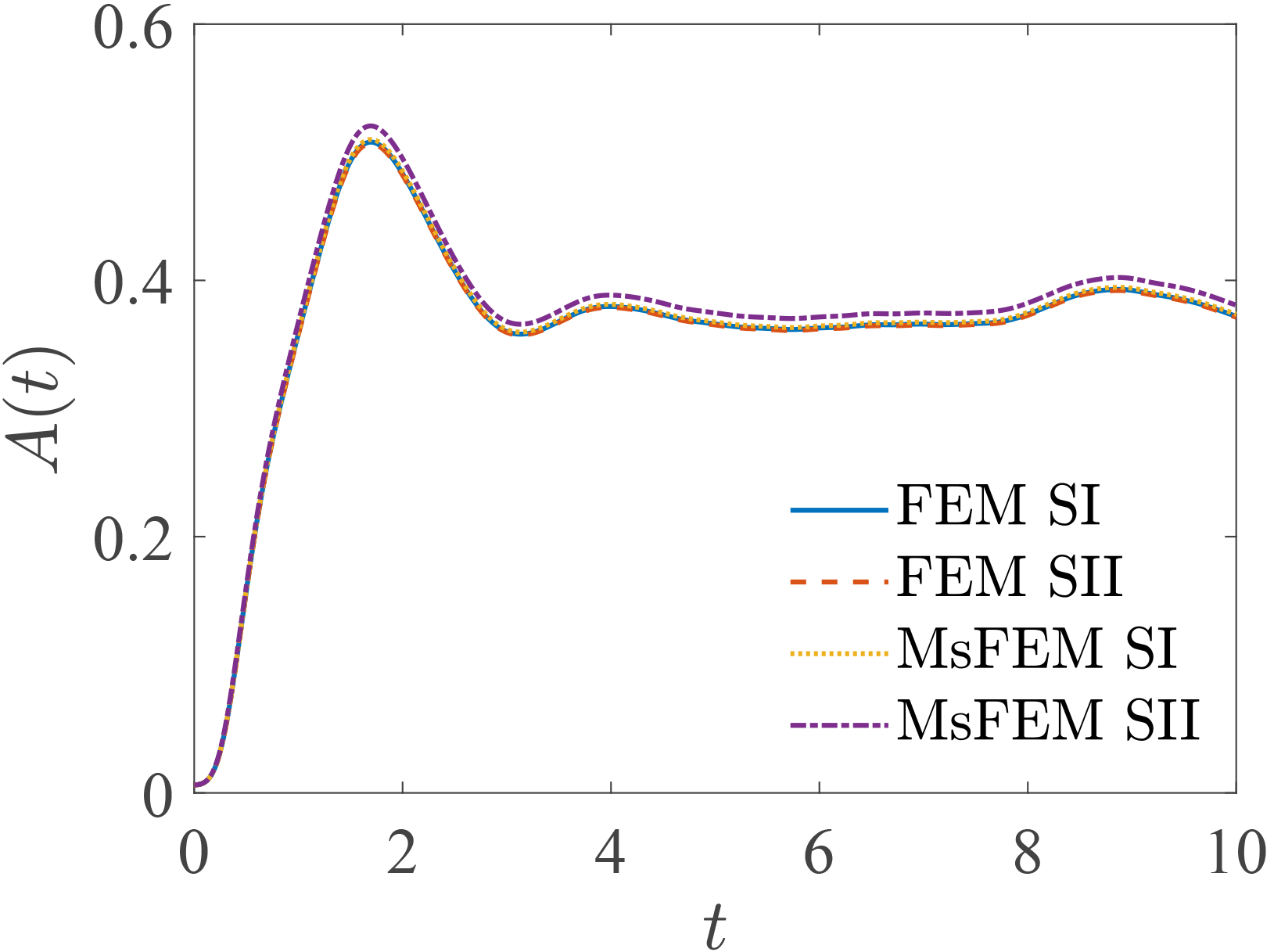}
  \includegraphics[width=1.6in]{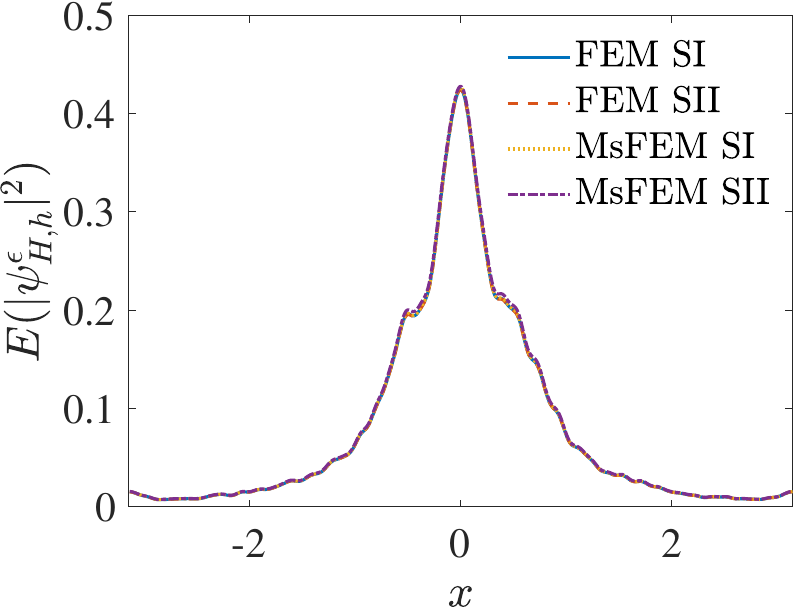}}
  \subfloat[Nonlinear case with $\lambda = 1.0$.]{\includegraphics[width=1.6in]{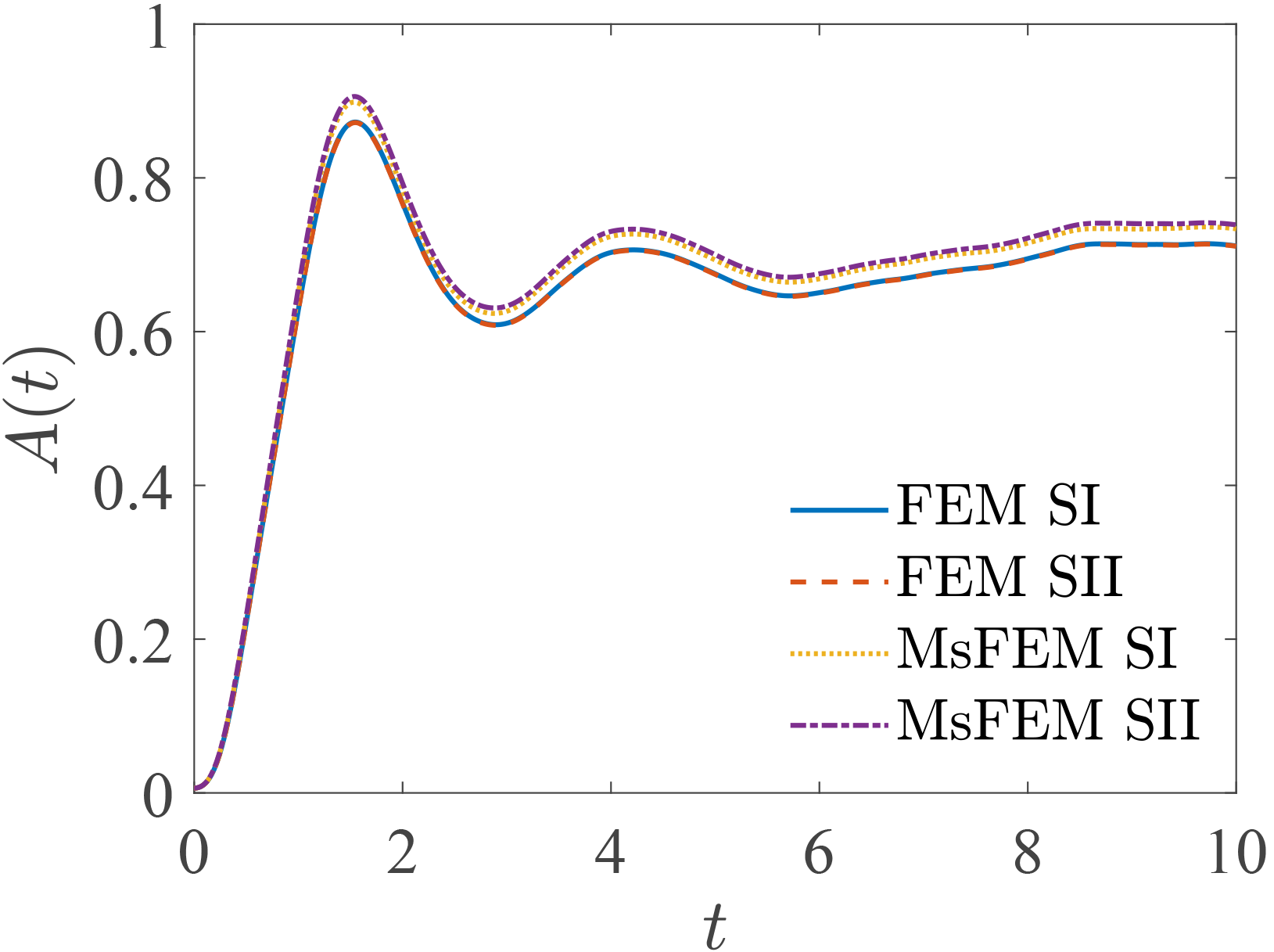}
  \includegraphics[width=1.6in]{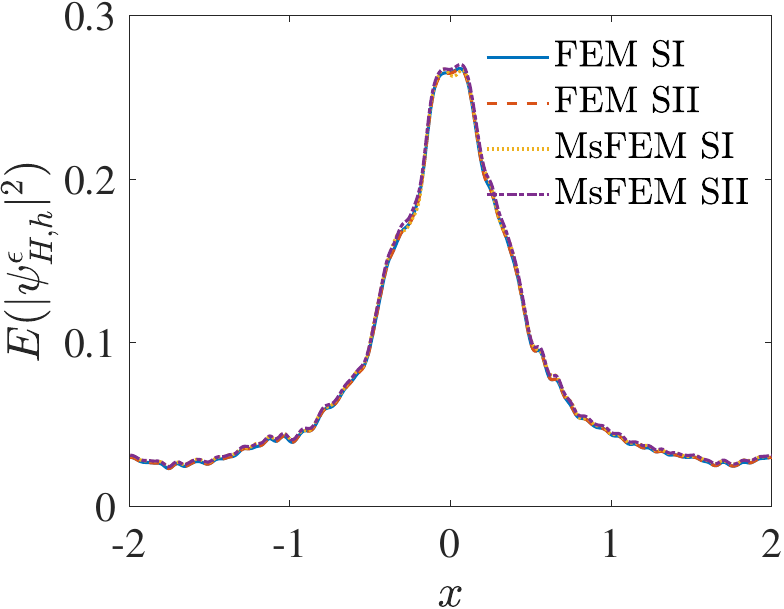}}
  \caption{Numerical results computed by FEM and MsFEM with different time-splitting methods for the NLSE with $\lambda = 0$ and $\lambda = 1.0$.}
  \label{fig:FEM-MsFEM-random-potential-qMC500}
\end{figure}

\subsubsection{Convergence of MC sampling and qMC sampling}
The MC method and qMC method exhibit different convergence rates. To eliminate the perturbation of a small sample size, we adopt the random potential
\begin{equation}
  v(x,\omega) = 1.0 + \sigma\sum_{j=1}^m\sin(jx)\frac 1{j^{\beta}}\xi_j(\omega),
\end{equation}
in which the parameters are: $\sigma = 1.0$, $\beta = 2.0$, $m = 5$. The other simulation settings are: $\lambda = 0.1$, $\epsilon = \frac 18$, $\mathcal{D} = [-\pi,\pi]$, $h = \frac{2\pi}{600}$, $H = 6h$, $T = 1.0$ and $\Delta t = $1.0e-03. In this experiment, we use $50000$ samples to compute the reference solution and record the $L^2$ error of the density
$\|\mathds{E}(|\psi_\mathrm{num}^{\epsilon}|^2) - \mathds{E}(|\psi_{\mathrm{ref}}^{\epsilon}|^2)\|$
as the sampling number varies with $N = 100$, $200$, $400$, $800$, $1600$ and $3200$ for both MC method and qMC method. The result is shown in \Cref{fig:convergence-rate-MC-qMC}.
\begin{figure}[htbp]
  \centering
  \includegraphics[width=2.5in]{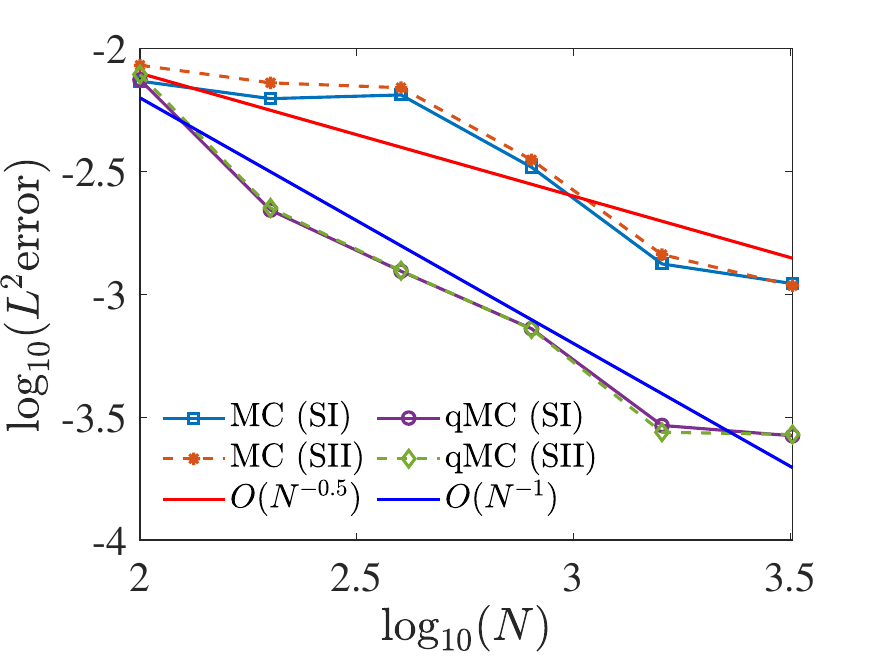}\\
  \caption{Numerical convergence rates of the MC and qMC methods.}
  \label{fig:convergence-rate-MC-qMC}
\end{figure}

\subsubsection{Investigation of wave propagation}
\label{subsec:Anderson-localization}
To observe the wave propagation phenomena, we vary the nonlinear coefficient $\lambda$ and record the evolution of $A(t)$. In addition, we depict $\mathds{E}(|\psi^{\epsilon}_{H,h}|^2)$ at the final time. In these simulations, we generate 500 qMC samples to approximate the random potential.
The parameters of simulations are: $\mathcal{D} = [-2\pi, 2\pi]$, $\sigma = 1.0$, $\beta = 0.0$, and $m = 5$. For the MsFEM, we fix $h = \frac{4\pi}{6000}$ and $H = 10h$. To observe the long-time behavior, we set the terminal time to $T = 20$. We vary $\lambda$ as $0$, $1$, $10$, and $20$, and the corresponding results are shown in \cref{fig:1D-vary-lambda}. One can see that $A(t)$ increases as time evolves for the nonlinear cases, while
for the linear case, it remains within the range of $(0.51, 0.57)$ during the time interval from $t = 10$ to $t = 20$.
\begin{figure}[htbp]
  \centering
  \includegraphics[width=2.5in]{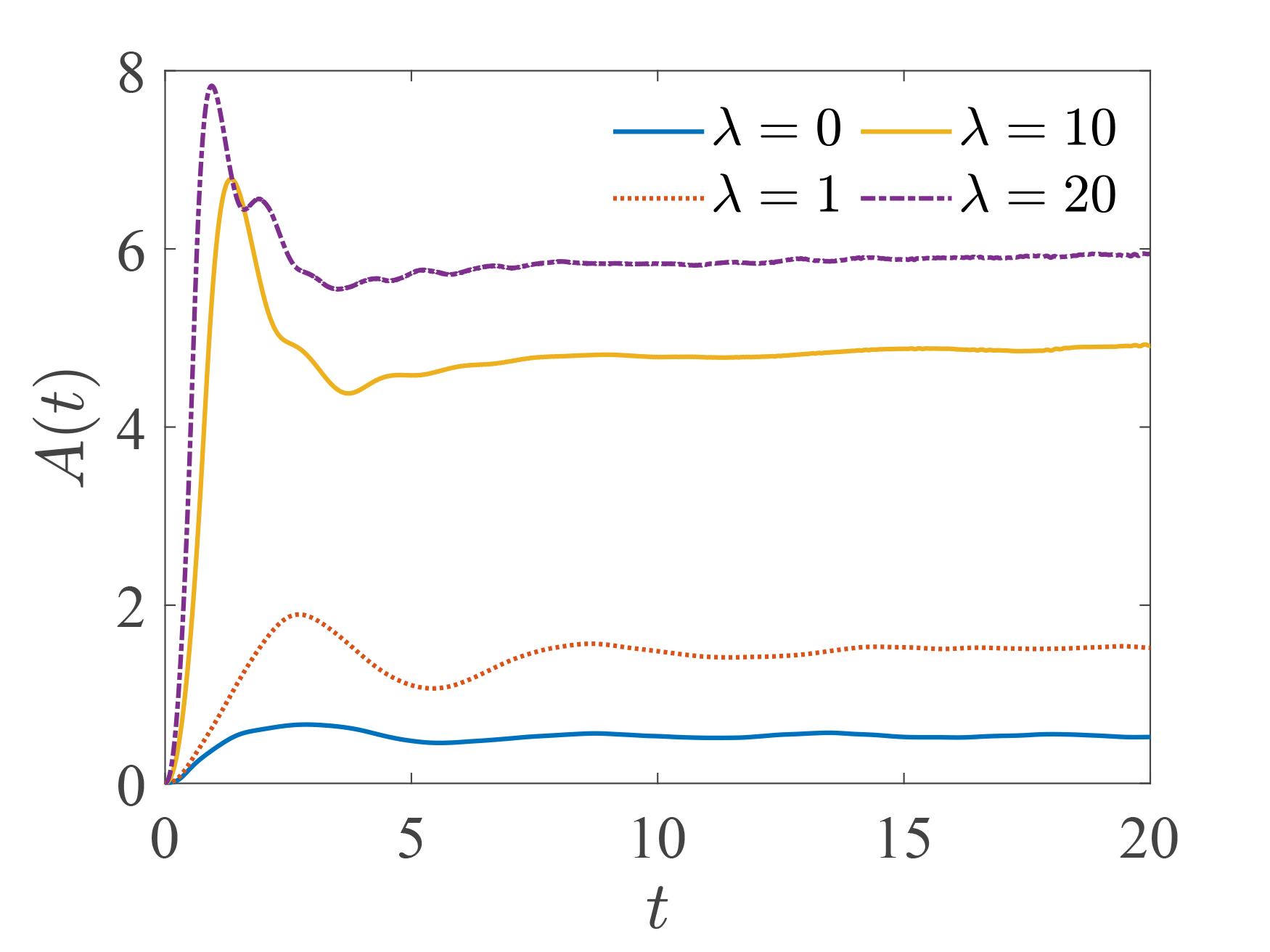}
  \includegraphics[width=2.5in]{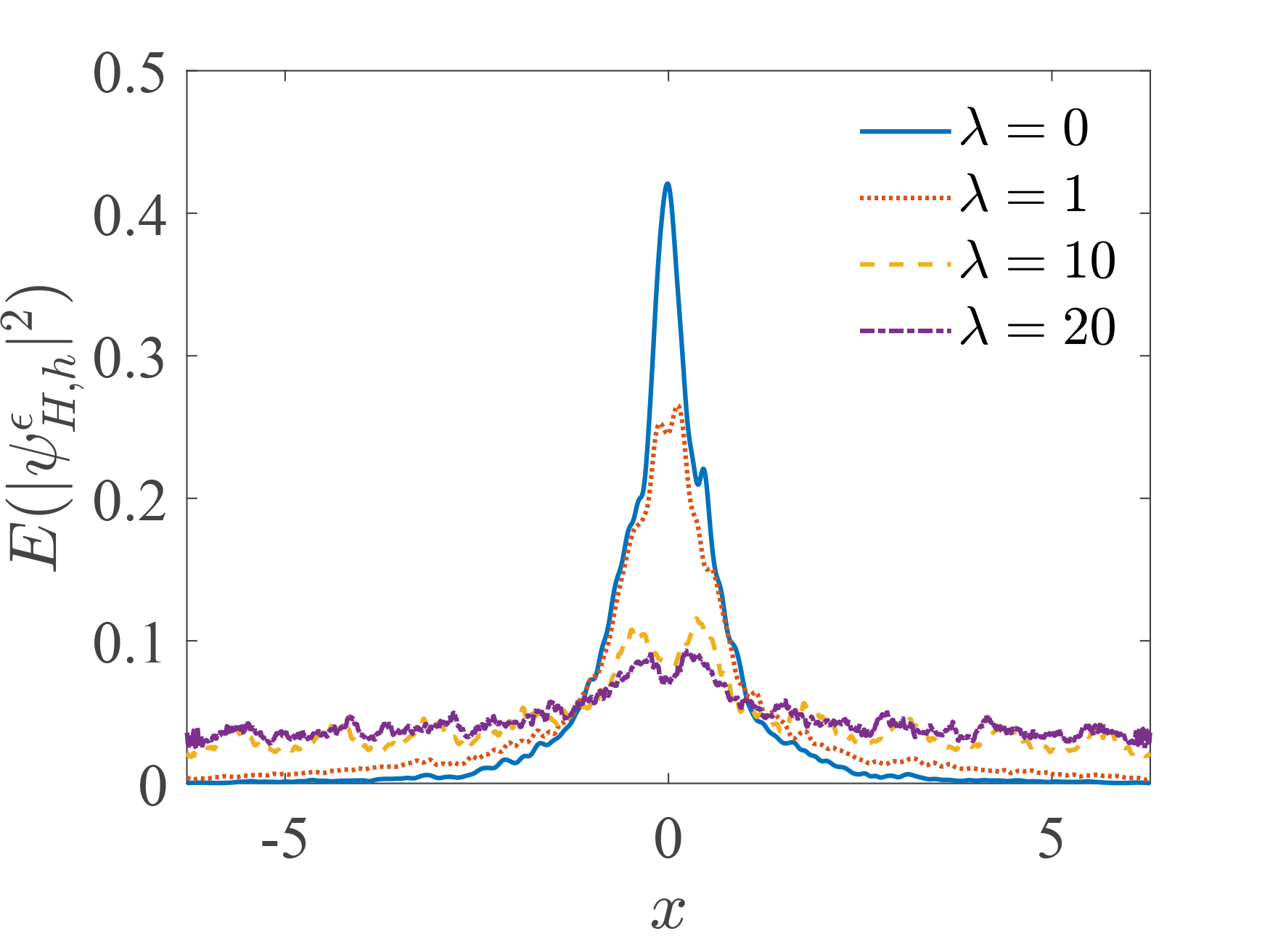}
  \caption{The evolution of $A(t)$ and density of expectation at $T = 20$, as the nonlinear coefficient $\lambda$ varies. Results computed by the {\bf SI} and MsFEM.}
  \label{fig:1D-vary-lambda}
\end{figure}

In the 2D case, we use the following settings in our numerical simulations: $h = \frac {1}{64}$, $\epsilon = \frac 1{4}$, $H = 4h$, $\beta = 0$, $m = 5$, and $\sigma = 5$. Our results, depicted in \Cref{fig:2D-nonlinear-At} and \Cref{fig:2D-nonlinear-Emass}, show that while the localization of mass distribution is observed for the linear case, the nonlinear case exhibits delocalization.

\begin{figure}[htbp]
  \centering
  \includegraphics[width=2.5in]{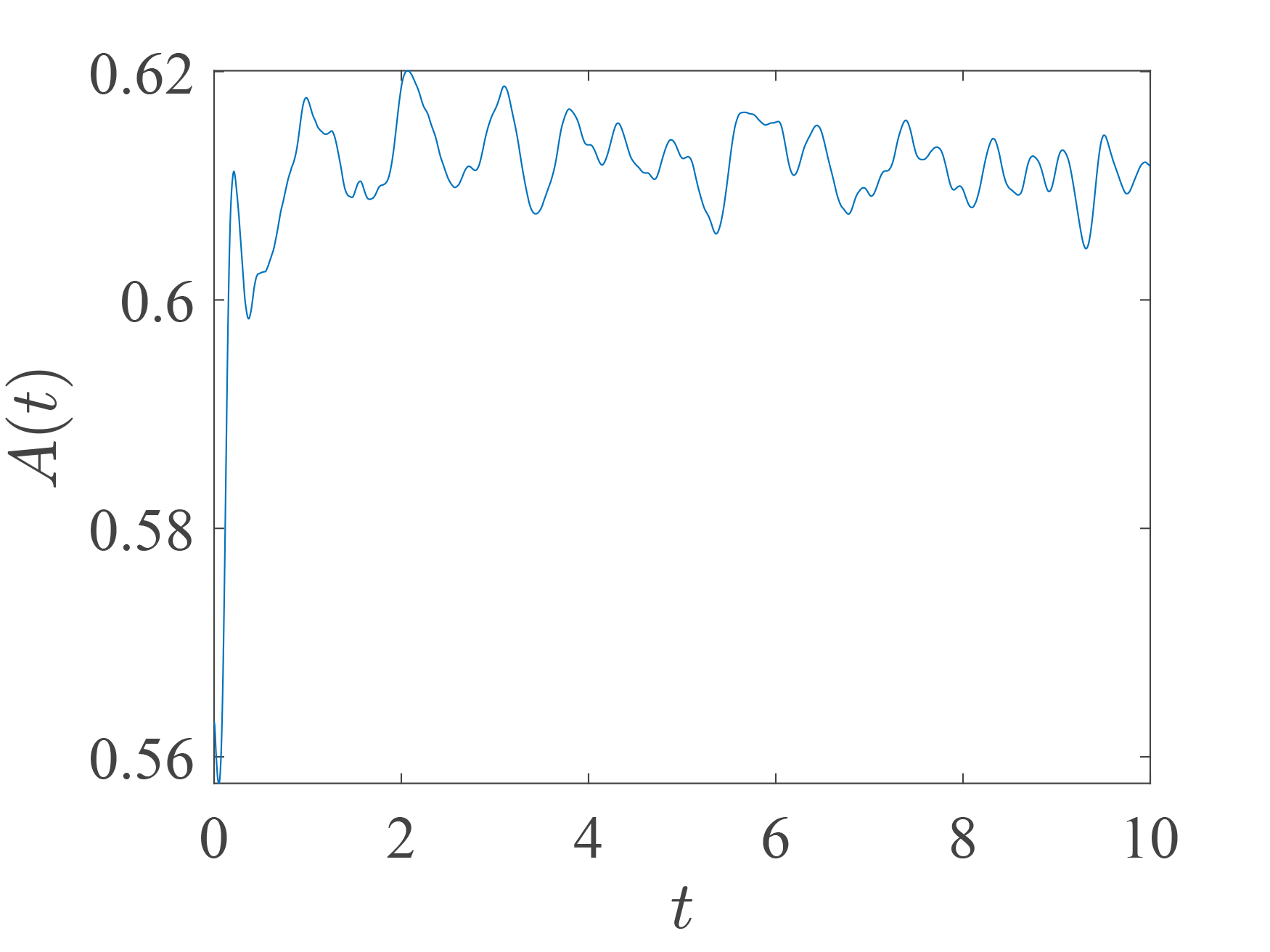}
  \includegraphics[width=2.5in]{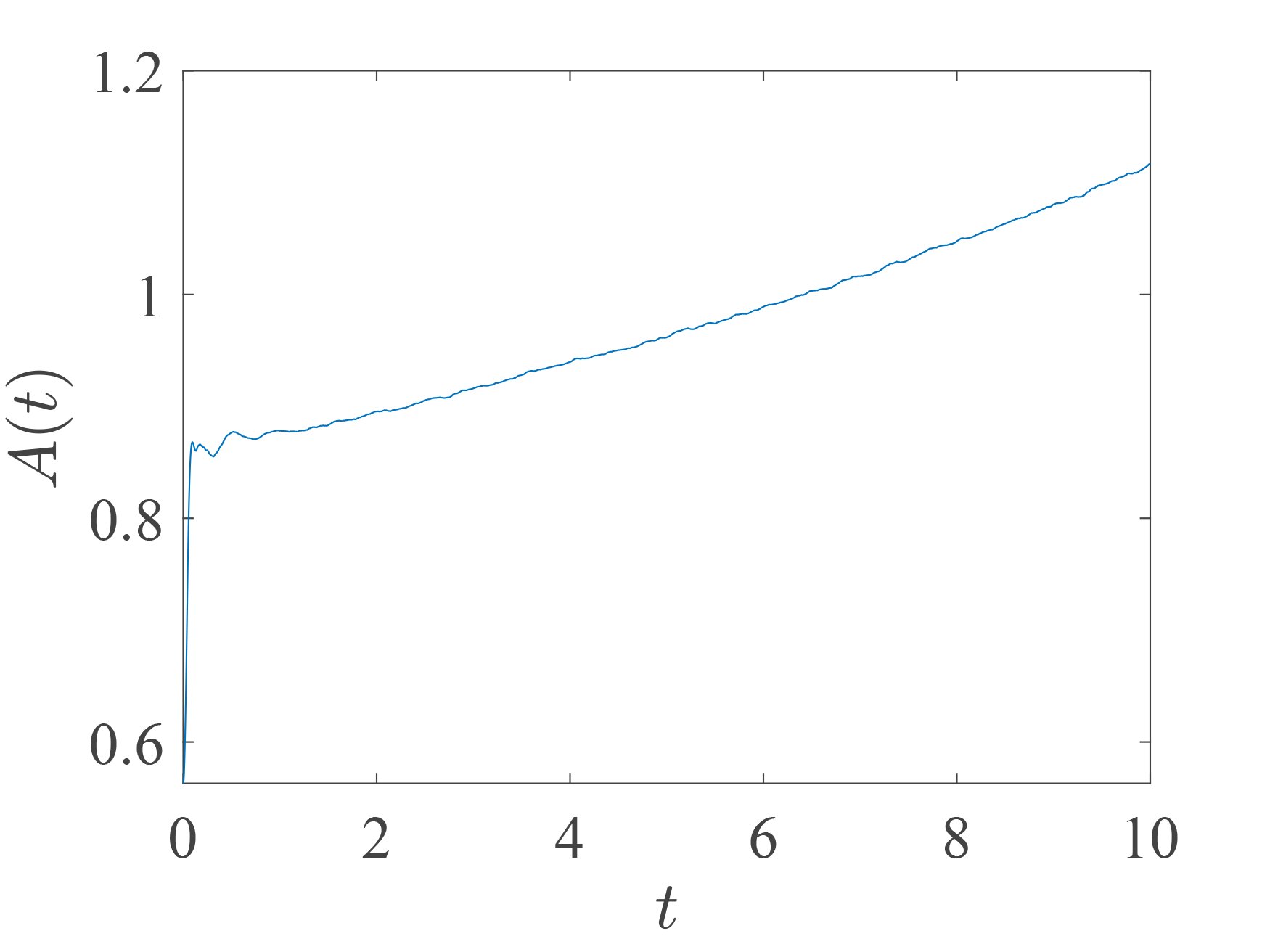}
  \caption{The evolution of $A(t)$ for 2D linear case and nonlinear case with $\lambda = 20$. Results are computed by {\bf SI} and MsFEM.}
  \label{fig:2D-nonlinear-At}
\end{figure}
\begin{figure}[htbp]
  \centering
  \includegraphics[width=2.5in]{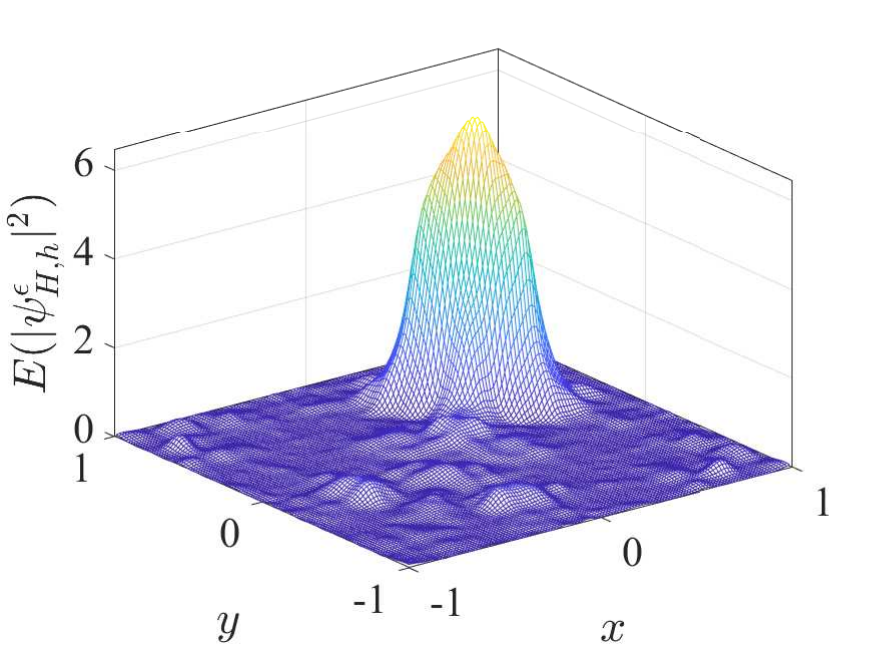}
  \includegraphics[width=2.5in]{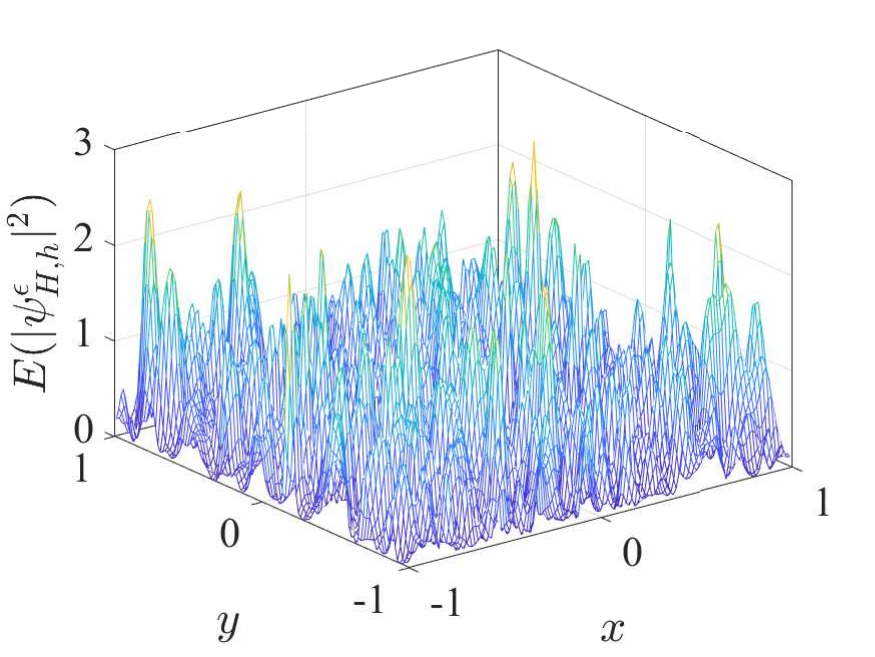}
  \caption{The localization and delocalization of mass distribution of the 2D linear Schr\"odinger equation and NLSE with random potentials, respectively.}
  \label{fig:2D-nonlinear-Emass}
\end{figure}

\section{Conclusion}
In this paper, we have introduced two time-splitting finite element methods~(TS-FEMs) for the cubic nonlinear Schr\"odinger equation~(NLSE), incorporating the multiscale finite element method (MsFEM) to reduce spatial degrees of freedom. We have refined the optimization problems to eliminate the mesh dependence of multiscale basis functions introduced by local orthogonal normalization constraints. For the temporal evolution, we employed two Strang time-splitting techniques in which one maintains the convergence rate of the NLSE with discontinuous potentials. Meanwhile, we utilized the quasi-Monte Carlo sampling method to generate random potentials. Hence the proposed methods have second-order accuracy in both time and space and nearly first-order convergence in the random space. Furthermore, we provided a convergence analysis for the $L^2$ error estimate, which was verified through numerical experiments. Additionally, we presented a multiscale reduced basis method to alleviate the computational burden of constructing multiscale basis functions for random potentials. Using these methods, we investigated the long-term wave propagation of the NLSE with parameterized random potentials in 1D and 2D physical spaces, observing localization in the linear case and delocalization in the nonlinear case. 

\section*{Acknowledgments}
The research of Z. Zhang is supported by the National Natural Science Foundation of China (project 12171406), the Hong Kong RGC grant (Projects 17307921 and 17304324), the Outstanding Young Researcher Award of HKU (2020-21), Seed Funding for Strategic Interdisciplinary Research Scheme 2021/22 (HKU), and seed funding from the HKU-TCL Joint Research Center for Artificial Intelligence.

\section*{Declaration of interest}
The authors report no conflict of interest.

\appendix
\section{A multiscale reduced basis method}
\label{sec:reduced-MsFEM}
As a supplement, we present an approach to reduce the computational effort required for construction basis functions for random potentials. This approach is motivated by the method proposed in \cite{doi:10.1137/19M127389X}, which consists of offline and online stages. In the offline stage, let $\{v(\bx, \omega_q)\}_{q = 1}^Q$ be the samples of potential with $Q$ representing the number of samples. At the node $\bx_p$, the sample mean of multiscale basis functions
is given by $\zeta_p^0 = \frac 1{Q}\sum_{q=1}^Q \phi_p(\bx, \omega_q)$, and the fluctuation is defined as $\tilde{\phi}_p(\bx, \omega_q) = \phi_p(\bx, \omega_q) - \zeta_p^0$. We employ the POD method on $\{\tilde{\phi}_p(\bx, \omega_q)\}_{q=1}^Q$ to build a set of reduced basis functions $\{\zeta_p^1(\bx), \cdots, \zeta_p^{m_p}(\bx)\}$ with $m_p \ll Q$. In the online stage, the multiscale basis function at $\bx_p$ has the following form
\begin{equation}
  \phi_p(\bx, \omega) = \sum_{l = 0}^{m_p} c_p^l(\omega) \zeta_p^l(\bx),
  \label{equ:basis-functions-random-operator}
\end{equation}
where $\{c_p^l\}_{l=0}^{m_p}$ are unknowns. Due to the wave function being represented by
\begin{equation}
  \psi^{\epsilon}_H(\bx, t, \omega) = \sum_{p=1}^{N_H}\sum_{l = 0}^{m_p} c_p^l(t, \omega) \zeta_p^l(\bx),
\end{equation}
the dofs in the Galerkin formulation is $\sum_{p=1}^{N_H}(m_p+1)$. To reduce the dofs of the Galerkin formulation, we compute $\{c_p^l\}_{l=0}^{m_p}$ in~\eqref{equ:basis-functions-random-operator} by solving the following reduced optimal problems
\begin{align}
  &\min a(\phi_p, \phi_p),\\
  \text{s.t.} &\int_{\mathcal{D}}\phi_p\phi_q^H\mathrm{d}\mathbf{x} = \lambda(H)\delta_{pq}, \quad \forall 1\leq q \leq N_H.
  \label{equ:optimal-problem2}
\end{align}
Since the value of $m_p$ is small~\cite{doi:10.1137/19M127389X}, the computation cost of constructing the multiscale basis functions can be reduced, while the dofs in the Galerkin formulation remain at $N_H$ in the online stage. In addition, we adopt parallel implementations with 12 cores in the following tests.

To demonstrate the improvement offered by the reduced MsFEM basis method, we carry out two numerical tests. We fix $m_p = 3$ for $p = 1,\cdots, N_H$, and generate 1000 samples using the qMC method, with 200 samples allocated for the offline stage and the remaining 800 samples used in the online stage. The {\bf SI} method is employed for time evolution.


Here the experiment of the nonlinear case in~\ref{subsec:random-test1} is conducted. We compare the numerical solution computed by the FEM, MsFEM, and the MsFEM with the POD reduction method as in~\cref{fig:reduced-methods-1}.
\begin{figure}[htbp]
  \centering
  \includegraphics[width=2.5in]{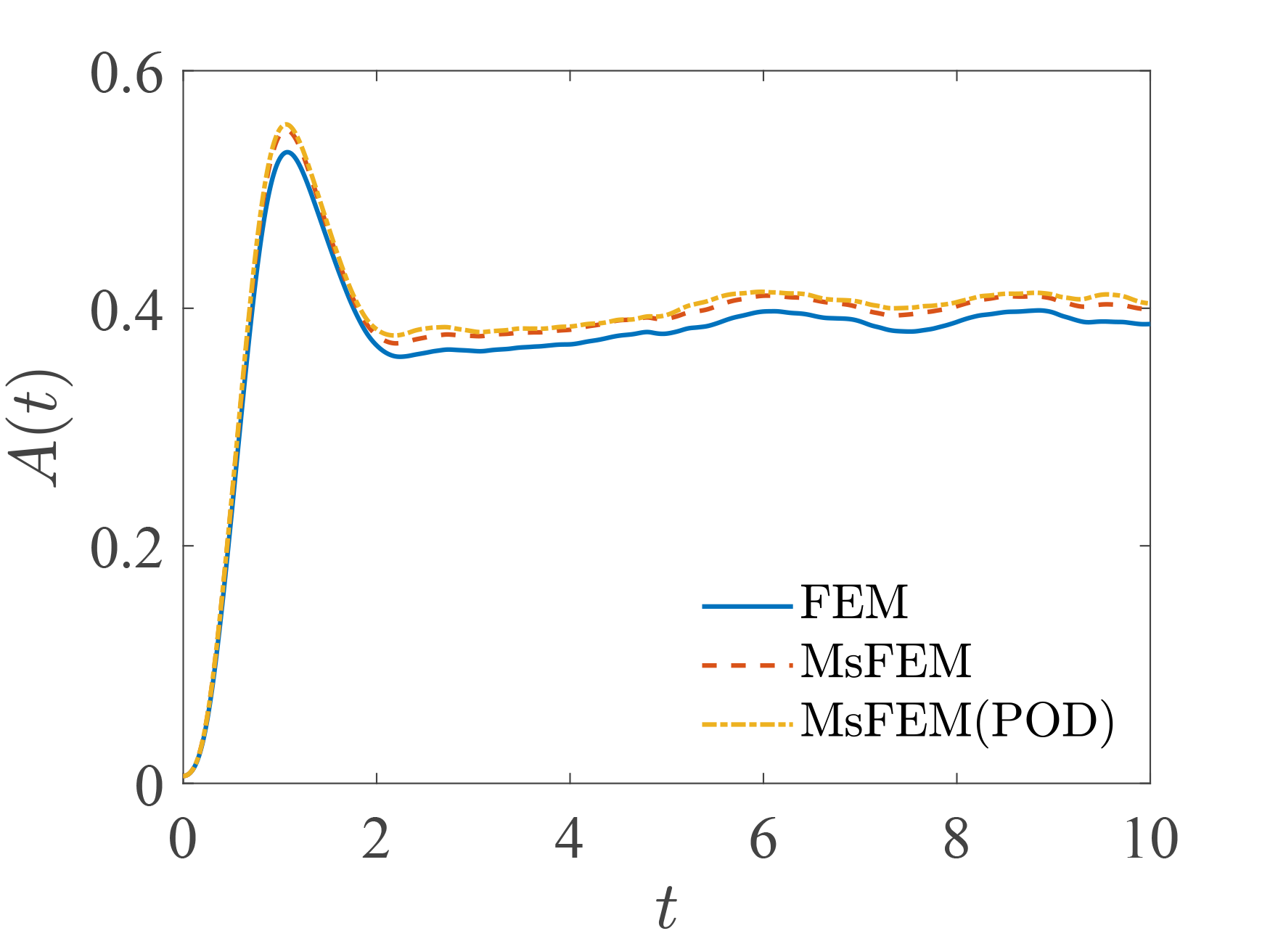}
  \includegraphics[width=2.5in]{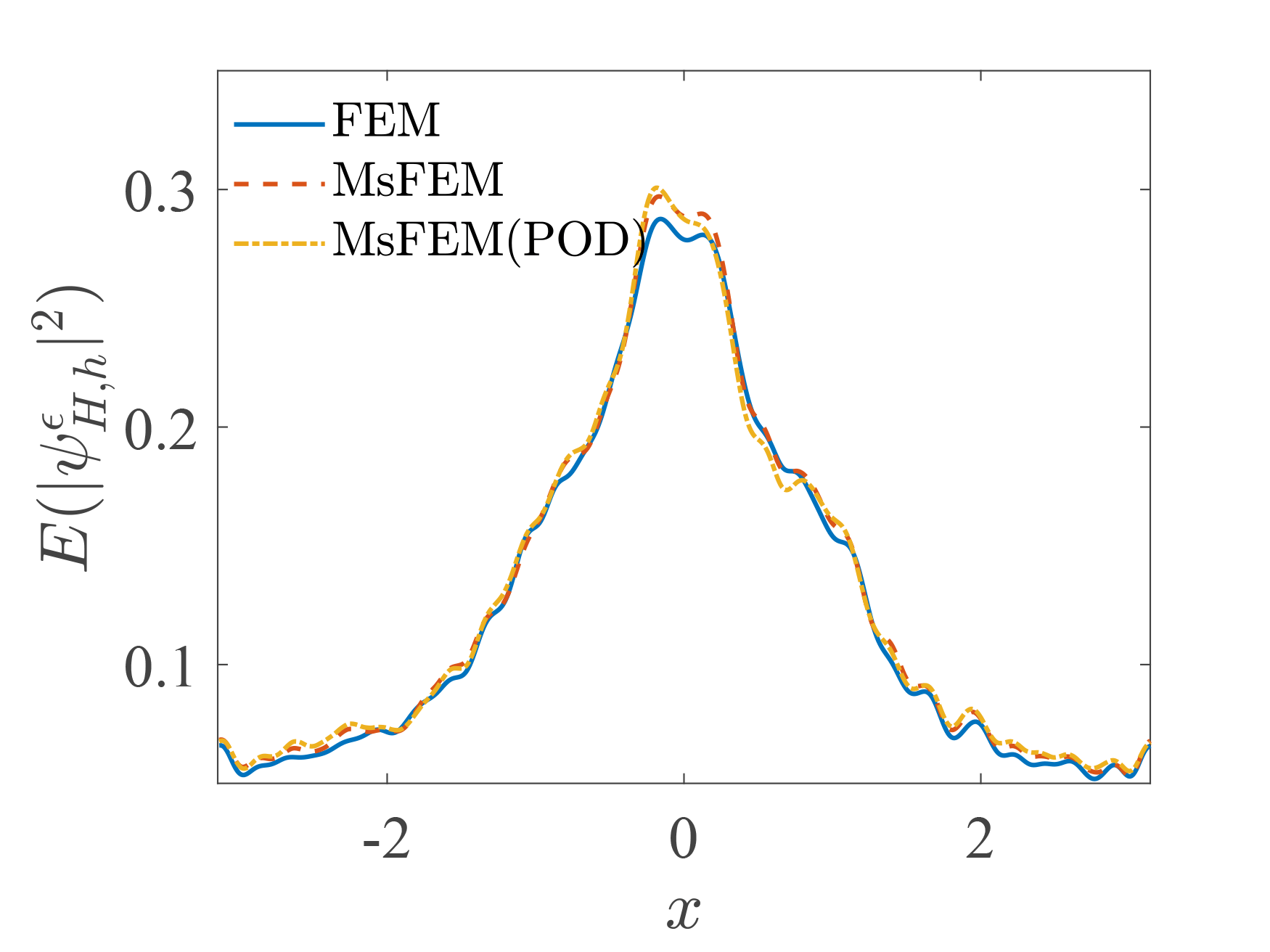}
  \caption{Numerical comparison of FEM, MsFEM and the MsFEM with POD reduction methods.}
  \label{fig:reduced-methods-1}
\end{figure}

Furthermore, we vary the qMC samples and record the corresponding time costs in~\cref{tab:time-costs-qMC-POD}. Note that the time costs of MsFEM with the POD reduction are attributed to both the offline and online stages of the computations. As illustrated in \cref{tab:time-costs-qMC-POD}, a considerable enhancement in simulation efficiency is achieved through the application of MsFEM, with additional improvements attained in the integration of the POD reduction method.
\begin{table}[htbp]
    \centering
    \caption{Comparison of time costs (second) for the FEM, MsFEM, and the MsFEM with POD reduction methods.}
    \begin{tabular}{|c|c|c|c|}
    \hline
    Sample number & FEM & MsFEM & MsFEM (POD)~(offline) \\ 
    \hline
    1000 & 2116 & 152 & 107~(35) \\ 
    2000 & 4205 & 308 & 243~(35) \\ 
    4000 & 8376 & 620 & 501~(34) \\ 
    8000 & 16633 & 1239 & 1020~(40) \\ 
    16000 & 33469 & 2466 & 2137~(43) \\ 
    \hline
    \end{tabular}
    \label{tab:time-costs-qMC-POD}
\end{table}

We repeat the experiment of NLSE with $\lambda = 20$ as in~\ref{subsec:Anderson-localization}. The corresponding numerical results are shown in~\cref{fig:reduced-methods-2}. The MsFEM combined with the POD reduction method takes approximately 14978 seconds (4.16 hours), with 1064 seconds spent on the offline stage. In contrast, the MsFEM without incorporating the POD method takes 20,061 seconds (5.57 hours).
\begin{figure}[htbp]
  \centering
  \includegraphics[width=2.5in]{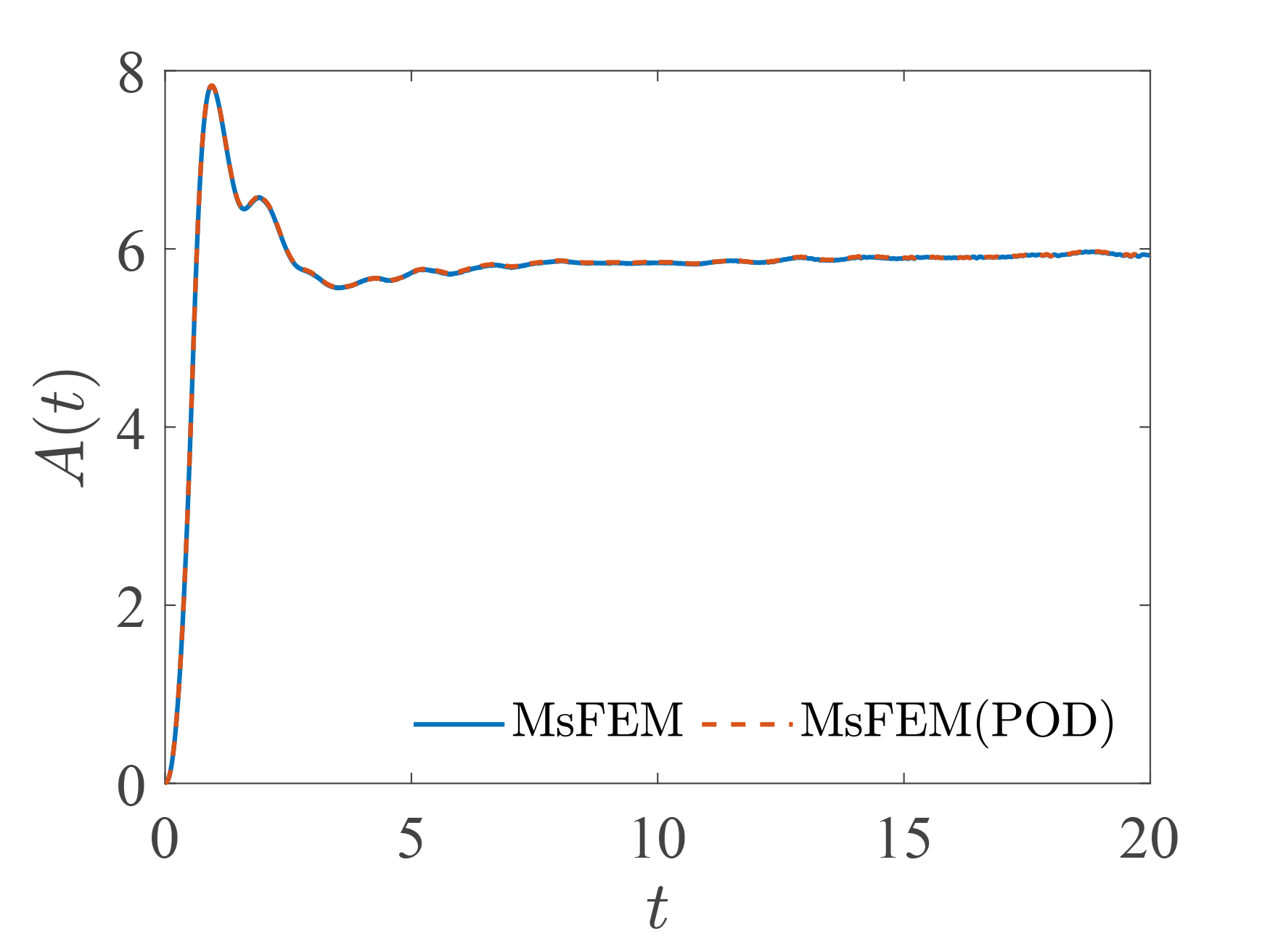}
  \includegraphics[width=2.5in]{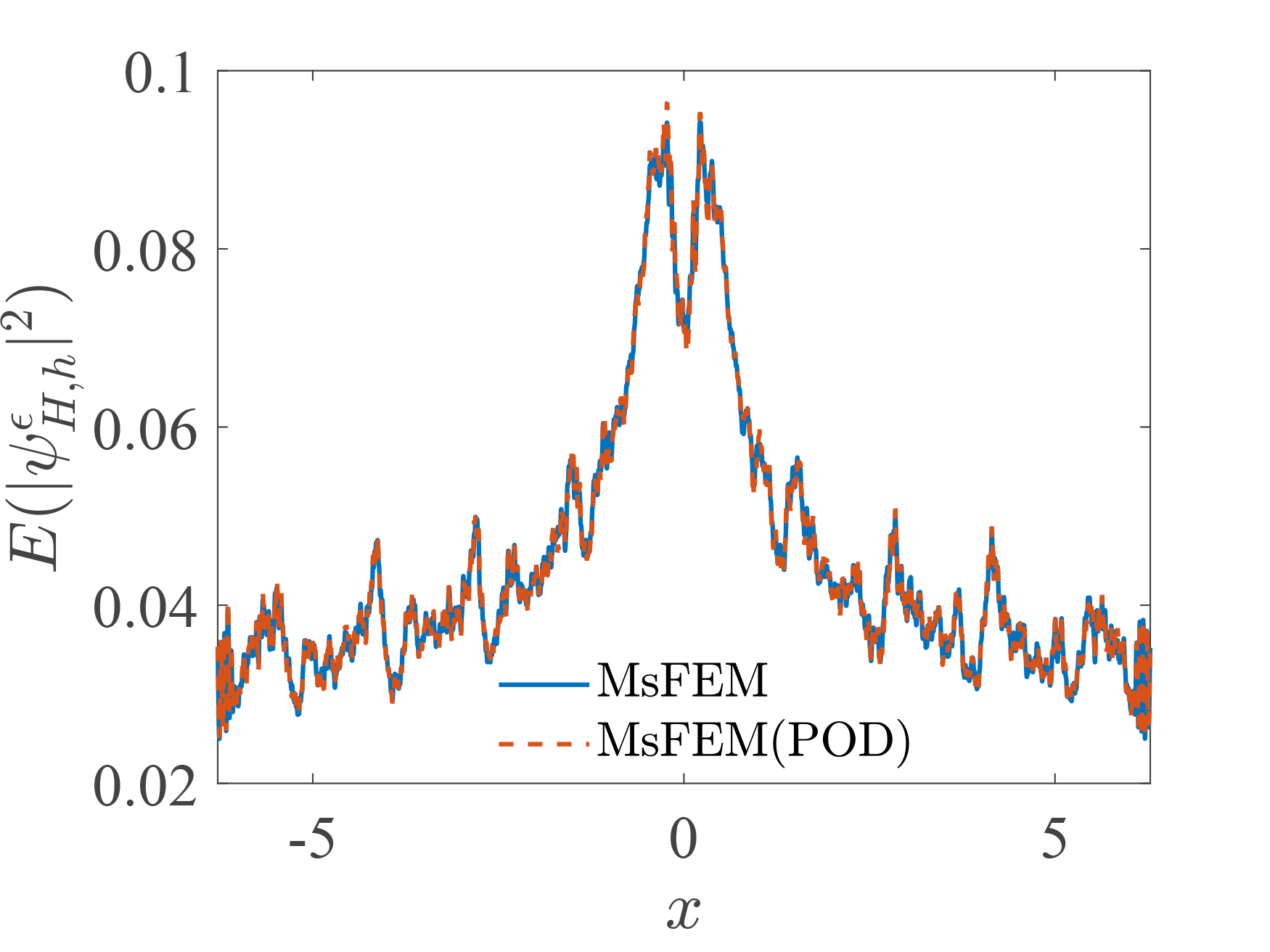}
  \caption{Numerical comparison of MsFEM method and the MsFEM with the POD reduction method for the 1D NLSE with $\lambda = 20$.}
  \label{fig:reduced-methods-2}
\end{figure}

%

\section{The proof of \cref{lem:regularity-NLSE}}
\label{appsec:proof-regularity-NLSE}
\begin{proof}
  We first study the regularity of $\psi^{\epsilon}$ in space. Since the energy is a constant
  \begin{equation*}
    E(t) = \frac{\epsilon^2}{2}\|\nabla\psi^{\epsilon}\|^2 + (v, |\psi^{\epsilon}|^2) + \frac{\lambda}{2} \|\psi^{\epsilon}\|_{L^4}^4 = E_0 < \infty
  \end{equation*}
  with $\lambda \geq 0$, we directly get
  \begin{align*}
    \frac{\epsilon^2}{2}\|\nabla\psi^{\epsilon}\|^2
    &=
    E_0 - (v, |\psi^{\epsilon}|^2) - \frac{\lambda}{2} \|\psi^{\epsilon}\|_{L^4}^4
    \leq
    E_0 + \| v \|_{\infty},
  \end{align*}
  which means
  \begin{equation*}
    \|\nabla\psi^{\epsilon}\| \leq \frac {C}{\epsilon}.
  \end{equation*}
  Meanwhile, we also have
  \begin{equation}
    \|\psi^{\epsilon}\|_{L^4}^4 \leq \frac{E_0 + \|v\|_{\infty} }{\lambda}.
  \end{equation}
  Owing to the Hamiltonian $\mathcal{H}$ is not explicitly dependent on time, and $[\mathcal{H}^2, \mathcal{H}] = 0$, the following average value of mechanics quantity is independent of time, i.e.,
  \begin{equation}
    (\mathcal{H}^2\psi^{\epsilon}, \psi^{\epsilon}) = E_1
  \end{equation}
  with $\mathrm{d}_tE_1 = 0$. Explicitly, we have
  \begin{align*}
    (\mathcal{H}^2\psi^{\epsilon}, \psi^{\epsilon}) = &\frac{\epsilon^4}{4}(\Delta^2\psi^{\epsilon}, \psi^{\epsilon}) + (v^2\psi^{\epsilon}, \psi^{\epsilon}) + \lambda^2(|\psi^{\epsilon}|^4\psi^{\epsilon}, \psi^{\epsilon})
    \\&
    - \epsilon^2(\Delta v\psi^{\epsilon}, \psi^{\epsilon}) +
    2\lambda(v|\psi^{\epsilon}|^2\psi^{\epsilon}, \psi^{\epsilon}) - \lambda\epsilon^2(\Delta|\psi^{\epsilon}|^2\psi^{\epsilon}, \psi^{\epsilon}).
  \end{align*}
  We then get
  \begin{align*}
    &\frac{\epsilon^4}{4}\|\Delta\psi^{\epsilon}\|^2 + \|v\psi^{\epsilon}\|^2 + \lambda^2\| \psi^{\epsilon}\|_{L^6}^6
    \\ \leq&
    E_1 + \epsilon^2(\Delta v\psi^{\epsilon}, \psi^{\epsilon}) -
    2\lambda(v|\psi^{\epsilon}|^2\psi^{\epsilon}, \psi^{\epsilon}) + \lambda\epsilon^2(\Delta|\psi^{\epsilon}|^2\psi^{\epsilon}, \psi^{\epsilon})
    \\ \leq&
    E_1 - \epsilon^2(\nabla v\psi^{\epsilon}, \nabla\psi^{\epsilon}) +
    2\lambda \| v \|_{\infty} \|\psi^{\epsilon}\|_{L^4}^4 + 3\lambda\epsilon^2 \| \psi^{\epsilon} \|_{\infty}^2 \| \nabla\psi^{\epsilon} \|^2
    \\ \leq&
    E_1 + C\| v \|_{\infty} + \epsilon\|\nabla v\|_{\infty} +
    2\lambda \| v \|_{\infty} \|\psi^{\epsilon}\|_{L^4}^4 + 3\lambda C \| \psi^{\epsilon} \|_{\infty}^2.
  \end{align*}
  Hence, there exists a constant $C$ that depends on $\| v \|_{\infty}$, $\|\nabla v \|_{\infty}$, $E_0$, $E_1$, and $\| \psi^{\epsilon} \|_{\infty}$ such that
  \begin{equation}
    \| \nabla^2\psi^{\epsilon} \| \leq \frac{C}{\epsilon^2}, \quad
    \| \psi^{\epsilon} \|_{L^6}^6 \leq \frac{C}{\lambda^2}.
  \end{equation}
  Furthermore, if $\psi^{\epsilon} \in H^4$, we also have $[\mathcal{H}^s, \mathcal{H}] = 0$ for $s \leq 4$. Repeat the above procedures and we can get
  \begin{equation}
    \| \nabla^s\psi^{\epsilon} \| \leq \frac{C}{\epsilon^s}.
  \end{equation}

  Next, we study the bound of $\|\partial_t\psi^{\epsilon}\|_{H^s}$ with $0 \leq s \leq 2$. Taking the time derivative for \eqref{equ:NLS_equ} yields
  \begin{equation}
    i\epsilon\partial_{tt}\psi^{\epsilon} = -\frac{\epsilon^2}{2}\Delta\partial_t\psi^{\epsilon} + v\partial_t\psi^{\epsilon} + 2\lambda|\psi^{\epsilon}|^2\partial_t\psi^{\epsilon} + \lambda(\psi^{\epsilon})^2\partial_t\bar\psi^{\epsilon}.
    \label{equ:partial-t-for-NLS}
  \end{equation}
  Take inner product of this equation with $\partial_t{\psi^{\epsilon}}$ and we get
  \begin{equation}
    i\epsilon\mathrm{d}_t(\partial_{t}\psi^{\epsilon},\partial_t{\psi^{\epsilon}}) = \lambda\int_{\mathcal{D}}(\partial_t\psi^{\epsilon}\bar{\psi^{\epsilon}})^2 - (\partial_t\bar\psi^{\epsilon}{\psi^{\epsilon}})^2\mathrm{d}\bx = 4i\lambda\int_{\mathcal{D}}\Re(\partial_t\psi^{\epsilon}\bar{\psi^{\epsilon}})
    \Im(\partial_t\psi^{\epsilon}\bar{\psi^{\epsilon}})\mathrm{d}\bx.
  \end{equation}
  Thus we have
  \begin{align*}
    \epsilon\mathrm{d}_t\|\partial_{t}\psi^{\epsilon}\|^2 &\leq 2\lambda \| \partial_t\psi^{\epsilon} {\psi^{\epsilon}} \|^2 \leq 2\lambda\| \psi^{\epsilon} \|_{\infty}^2 \| \partial_t\psi^{\epsilon} \|^2,
    \label{equ:est-reguarity-partialt-10}
  \end{align*}
  which indicates
  \begin{equation}
    \|\partial_{t}\psi^{\epsilon}\| \leq \|\partial_{t}\psi_{\mathrm{in}}\|\exp\left( \frac{2\lambda T\| \psi^{\epsilon} \|_{\infty}^2}{\epsilon} \right).
  \end{equation}
  For the initial condition, we have
  \begin{align*}
    \|\partial_t\psi_{\mathrm{in}}\| \leq \frac{\epsilon}{2}\|\nabla\psi_{\mathrm{in}}\| + \frac{1}{\epsilon}(v\psi_{\mathrm{in}},\psi_{\mathrm{in}}) + \frac{\lambda}{\epsilon}\|\psi_{\mathrm{in}}\|_{L^4}^2
    \leq \frac{C}{\epsilon}.
  \end{align*}
  We therefore get
  \begin{equation}
    \|\partial_{t}\psi^{\epsilon}\| \leq \frac{C}{\epsilon}\exp\left( \frac{2\lambda\| \psi^{\epsilon} \|_{\infty}^2 T}{\epsilon} \right).
  \end{equation}

  Take inner product of the equation \eqref{equ:partial-t-for-NLS} with $\partial_t\Delta\psi^{\epsilon}$, and we have
  \begin{multline*}
    \epsilon\mathrm{d}_t\|\nabla\partial_t\psi^{\epsilon}\|^2 = \Im\{2(\nabla v\partial_t\psi^{\epsilon}, \nabla\partial_t\psi^{\epsilon}) +
    4\lambda(\psi^{\epsilon}\partial_t\psi^{\epsilon}\nabla\bar\psi^{\epsilon}, \nabla\partial_t\psi^{\epsilon}) \\
    + 4\lambda(\bar\psi^{\epsilon}\partial_t\psi^{\epsilon}\nabla\psi^{\epsilon}, \nabla\partial_t\psi^{\epsilon}) +
    4\lambda(\psi^{\epsilon}\partial_t\psi^{\epsilon}\nabla\psi^{\epsilon}, \nabla\partial_t\psi^{\epsilon}) +
    2\lambda((\psi^{\epsilon})^2, (\nabla\partial_t\psi^{\epsilon})^2)\}.
  \end{multline*}
  By the inequalities
  \begin{align*}
    &\|\psi^{\epsilon} \partial_t\psi^{\epsilon} \nabla\psi^{\epsilon} \nabla\partial_t\psi^{\epsilon}\|_{L^1}
    \leq
    \| \psi^{\epsilon} \|_{L^6} \| \partial_t\psi^{\epsilon} \|_{L^6} \|  \nabla\psi^{\epsilon} \|_{L^6} \|\nabla\partial_t\psi^{\epsilon}\|
    \\ \leq&
    C\| \psi^{\epsilon} \|_{L^6} \left(\frac d3\|\partial_t\nabla\psi^{\epsilon}\| + \left(1-\frac d3\right)\|\partial_t\psi^{\epsilon}\| \right) \|\nabla^2\psi^{\epsilon}\|^{\frac 12 + \frac d6} \|\nabla\partial_t\psi^{\epsilon}\|
    \\ \leq&
    C\| \psi^{\epsilon} \|_{L^6} \left(\|\partial_t\nabla\psi^{\epsilon}\| + \|\partial_t\psi^{\epsilon}\| \right) \|\nabla^2\psi^{\epsilon}\| \|\nabla\partial_t\psi^{\epsilon}\|
  \end{align*}
  and
  \begin{align*}
    \| (\psi^{\epsilon})^2(\nabla\partial_t\psi^{\epsilon})^2 \|_{L^1} \leq \| \psi^{\epsilon} \|_{L^{\infty}}^2 \| \nabla\partial_t\psi^{\epsilon} \|^2,
  \end{align*}
  we get
  \begin{align*}
    \epsilon\mathrm{d}_t\|\partial_t\nabla\psi^{\epsilon}\|
    \leq&
    2\|\nabla v\|_{\infty} \|\partial_t\psi^{\epsilon}\| + C\lambda\|\nabla^2\psi^{\epsilon}\| \left(\|\partial_t\nabla\psi^{\epsilon}\| + \|\partial_t\psi^{\epsilon}\| \right) +
    2\lambda \| \psi^{\epsilon} \|_{L^{\infty}}^2 \| \nabla\partial_t\psi^{\epsilon} \|.
    \label{equ:est-reguarity-partialt-2}
  \end{align*}
  Then we arrive at
  \begin{align*}
    \|\partial_t\nabla\psi^{\epsilon}\| &\leq \left(\frac{2 \|\nabla v\|_{\infty}}{\epsilon} + \frac{C\lambda \|\nabla
    ^2\psi^{\epsilon}\|}{\epsilon} \right)\|\partial_t\psi^{\epsilon}\| \exp\left( \frac{C\lambda T \|\nabla
    ^2\psi^{\epsilon}\|}{\epsilon} + \frac{2\lambda T \|\psi^{\epsilon}\|_{\infty}^2}{\epsilon} \right)
    \\&\leq
    \frac{C\lambda}{\epsilon^4} \exp\left( \frac{C\lambda T}{\epsilon^3} \right).
  \end{align*}
  Let $d = 3$, and the above result can be replaced with
  \begin{equation}
    \|\partial_t\nabla\psi^{\epsilon}\| \leq \frac{2 \|\nabla v\|_{\infty}}{\epsilon} \|\partial_t\psi^{\epsilon}\| \exp\left( \frac{C\lambda T \|\nabla
    ^2\psi^{\epsilon}\|}{\epsilon} + \frac{2\lambda T \|\psi^{\epsilon}\|_{\infty}^2}{\epsilon} \right).
  \end{equation}
  By the similar procedures, we have
  \begin{align*}
    &\epsilon\mathrm{d}_t\|\partial_t\nabla^2\psi^{\epsilon}\|^2
    \leq
    \|\nabla^2v\|_{\infty} \|\partial_t\psi^{\epsilon}\| \|\partial_t\nabla^2\psi^{\epsilon}\| +
    2\|\nabla v\|_{\infty} \|\partial_t\nabla\psi^{\epsilon}\| \|\partial_t\nabla^2\psi^{\epsilon}\| +
    \\& \quad \quad \quad
    C\lambda\|\nabla^3\psi^{\epsilon}\|^{\frac 23+\frac{d}{9}} \|\partial_t\nabla\psi^{\epsilon}\|^{\frac d3} \|\partial_t\psi^{\epsilon}\|^{1-\frac d3} \|\partial_t\nabla^2\psi^{\epsilon}\| +
    \\& \quad \quad \quad
    C\lambda\|\nabla^3\psi^{\epsilon}\|^{\frac {6}{9-d}} \|\psi^{\epsilon}\|_{L^6}^{2-\frac {6}{9-d}} \|\partial_t\nabla\psi^{\epsilon}\|^{\frac d3} \|\partial_t\psi^{\epsilon}\|^{1-\frac d3} \|\partial_t\nabla^2\psi^{\epsilon}\| +
    \\& \quad \quad \quad
    C\lambda\|\nabla^2\psi^{\epsilon}\|^{\frac 12 + \frac d6} \|\partial_t\nabla^2\psi^{\epsilon}\|^{\frac 12 + \frac d6} \|\partial_t\psi^{\epsilon}\|^{\frac 12 - \frac d6} \|\partial_t\nabla^2\psi^{\epsilon}\| +
    C\lambda\|\psi^{\epsilon}\|_{\infty}^2\|\partial_t\nabla^2\psi^{\epsilon}\|^2
    \label{equ:est-reguarity-partialt-3}
  \end{align*}
  in which we use the inequalities
  \begin{align*}
    \|\nabla^2\psi^{\epsilon} \psi^{\epsilon} \partial_t\psi^{\epsilon} \partial_t\nabla^2\psi^{\epsilon}\|_{L^1}
    &\leq
    \|\psi^{\epsilon}\|_{L^6} \|\nabla^2\psi^{\epsilon}\|_{L^6} \|\partial_t\psi^{\epsilon}\|_{L^6} \|\partial_t\nabla^2\psi^{\epsilon}\|
    \\&\leq
    C\|\nabla^3\psi^{\epsilon}\|^{\frac 23+\frac{d}{9}} \|\partial_t\nabla\psi^{\epsilon}\|^{\frac d3} \|\partial_t\psi^{\epsilon}\|^{1-\frac d3} \|\partial_t\nabla^2\psi^{\epsilon}\|,
    \\
    \|\nabla\psi^{\epsilon} \nabla\psi^{\epsilon} \partial_t\psi^{\epsilon} \partial_t\nabla^2\psi^{\epsilon}\|_{L^1}
    &\leq
    \|\nabla\psi^{\epsilon}\|_{L^6}^2 \|\partial_t\psi^{\epsilon}\|_{L^6} \|\partial_t\nabla^2\psi^{\epsilon}\|
    \\&\leq
    C\|\nabla^3\psi^{\epsilon}\|^{\frac {6}{9-d}} \|\psi^{\epsilon}\|_{L^6}^{2-\frac {6}{9-d}} \|\partial_t\nabla\psi^{\epsilon}\|^{\frac d3} \|\partial_t\psi^{\epsilon}\|^{1-\frac d3} \|\partial_t\nabla^2\psi^{\epsilon}\|,
    \\
    \|\psi^{\epsilon} \nabla\psi^{\epsilon} \partial_t\nabla\psi^{\epsilon} \partial_t\nabla^2\psi^{\epsilon}\|_{L^1}
    &\leq \|\psi^{\epsilon}\|_{L^6} \|\nabla\psi^{\epsilon}\|_{L^6} \|\partial_t\nabla\psi^{\epsilon}\|_{L^6} \|\partial_t\nabla^2\psi^{\epsilon}\|
    \\&\leq
    C\|\nabla^2\psi^{\epsilon}\|^{\frac 12 + \frac d6} \|\partial_t\nabla^2\psi^{\epsilon}\|^{\frac 12 + \frac d6} \|\partial_t\psi^{\epsilon}\|^{\frac 12 - \frac d6} \|\partial_t\nabla^2\psi^{\epsilon}\|,
  \end{align*}
  and
  \begin{align*}
    \|(\psi^{\epsilon})^2(\partial_t\nabla^2\psi^{\epsilon})\|_{L^1} \leq \|\psi^{\epsilon}\|_{\infty}^2\|\partial_t\nabla^2\psi^{\epsilon}\|^2.
  \end{align*}
  Then we get
  \begin{equation}
    \|\partial_t\nabla^2\psi^{\epsilon}\| \leq \frac{C\lambda \|\nabla^3\psi^{\epsilon}\|^{\ell}\|\partial_t\nabla\psi^{\epsilon}\|^{\frac d3} \|\partial_t\psi^{\epsilon}\|^{1-\frac d3}}{\epsilon}\exp\left(\frac{C\lambda T\|\nabla^2\psi^{\epsilon}\| + C\lambda T\|\psi^{\epsilon}\|_{\infty}^2}{\epsilon}\right),
  \end{equation}
  where $\ell = \max\{\frac 23 +\frac d9, \frac{6}{9-d}\}$.
  Let $d = 3$ and we get the compact form
  \begin{equation}
    \|\partial_t\nabla^2\psi^{\epsilon}\| \leq \frac{C\lambda\|\nabla^3\psi^{\epsilon}\|}{\epsilon}\|\partial_t\nabla\psi^{\epsilon}\| \exp\left(\frac{C\lambda T\|\nabla^2\psi^{\epsilon}\| + C\lambda T\|\psi^{\epsilon}\|_{\infty}^2}{\epsilon}\right).
  \end{equation}
  Due to $\epsilon \ll 1$, the order of $\|\partial_t\psi^{\epsilon}\|_{H^s}$ with respect to $\epsilon$ directly depends on the estimate $\|\partial_t\nabla^s\psi^{\epsilon}\|$. Thus, there exists a constant $C_{\lambda,\epsilon}$ that depends on $\lambda$ and $\epsilon$ such that $\|\partial_t\psi^{\epsilon}\|_{H^s} \leq C_{\lambda, \epsilon}$. This completes the proof.
\end{proof}

\section{The proof of \cref{lem:regualrity-psi-wrt-random-variables}}
\label{sec:proof-lemma-regularity-ramdom}
\begin{proof}
  Let $|\boldsymbol{\nu}| = 1$, and we take the derivative with respect to $\xi_j(\omega)$ of \eqref{equ:NLS_equ_parameterized}. Denote $\partial_j\psi_m = \partial_{\xi_j}\psi^{\epsilon}_m$ and $\partial_jv_m = \partial_{\xi_j}v^{\epsilon}_m$, and we get
  \begin{equation*}
    i\epsilon\partial_t(\partial_j\psi_m) = -\frac{\epsilon^2}{2}\Delta(\partial_j\psi_m) + (\partial_jv_m)\psi^{\epsilon}_m + v_m^{\epsilon}(\partial_j\psi_m) + \lambda(2|\psi^{\epsilon}_m|^2\partial_j\psi_m + (\psi^{\epsilon}_m)^2\partial_j\bar\psi_m).
  \end{equation*}
  We have
  \begin{align*}
    &\epsilon\mathrm{d}_t\|\partial_j\psi_m\| \leq 2 \|\partial_j v_m\|_{\infty} + 2\lambda\|\psi^{\epsilon}_m\|_{\infty}^2 \|\partial_j\psi_m\|, \\
    &\epsilon\mathrm{d}_t\|\nabla\partial_j\psi_m\| \leq 2 \|\nabla\partial_j v_m\|_{\infty} + 2\|\partial_j v_m\|_{\infty} \|\nabla\psi^{\epsilon}_m\| + 2\|\nabla v_m\|_{\infty} \|\partial_j\psi_m\| +
    \\&\quad
    16\lambda \|\psi^{\epsilon}_m\|_{\infty} \|\partial_j\psi_m\|_{L^4} \|\nabla\psi^{\epsilon}_m\|_{L^4} + 2\lambda \|\psi^{\epsilon}_m\|_{\infty}^2 \|\nabla\partial_j\psi_m\|, \\
    &\epsilon\mathrm{d}_t\|\nabla^2\partial_j\psi_m\| \leq 2\|\nabla^2\partial_jv_m\|_{\infty} + 4\|\nabla\partial_jv_m\|_{\infty} \|\nabla\psi^{\epsilon}_m\| + 2\|\partial_jv_m\|_{\infty} \|\nabla^2\psi^{\epsilon}_m\| +
    \\& \quad
    2\|\nabla^2v_m\|_{\infty} \|\partial_j\psi_m\| + 4\|\nabla v_m\|_{\infty}\|\nabla\partial_j\psi_m\| + 8\lambda\|\psi^{\epsilon}_m\|_{\infty} \|\nabla^2\psi^{\epsilon}_m\|_{L^4} \|\partial_j\psi_m\|_{L^4} +
    \\& \quad
    8\lambda \|\nabla\psi^{\epsilon}_m\|_{L^6}^2 \|\partial_j\psi_m\|_{L^6} + 16\lambda \|\psi^{\epsilon}_m\|_{\infty} \|\nabla\psi^{\epsilon}_m\|_{L^4} \|\nabla\partial_j\psi^{\epsilon}_m\|_{L^4} +
    2\lambda\|\psi^{\epsilon}_m\|_{\infty}^2 \|\nabla^2\partial_j\psi_m\|.
  \end{align*}
  Owing to
  \begin{align*}
    \|\partial_j\psi_m\|_{L^4} \|\nabla\psi^{\epsilon}_m\|_{L^4} &\leq C \|\nabla\partial_j\psi_m\|^{\frac d4}\|\partial_j\psi_m\|^{1-\frac d4} \|\psi_m\|_{H^2}^{\frac 12}\|\psi_m\|_{\infty}^{\frac 12} \\
    &\leq C \|\psi_m\|_{H^2}^{\frac 12}\|\psi_m\|_{\infty}^{\frac 12}\left(\frac d4\|\nabla\partial_j\psi_m\| + \left({1-\frac d4}\right)\|\partial_j\psi_m\|\right),
    \\
    \|\nabla^2\psi^{\epsilon}_m\|_{L^4} \|\partial_j\psi_m\|_{L^4} &\leq C \|\nabla^3\psi_m\|^{ \frac{8+d}{12}}\|\psi_m\|^{\frac{4-d}{12}}\left(\frac d4\|\nabla\partial_j\psi_m\| + \left({1-\frac d4}\right)\|\partial_j\psi_m\|\right),
    \\
    \|\nabla\psi^{\epsilon}_m\|_{L^6}^2 \|\partial_j\psi_m\|_{L^6} &\leq C\|\nabla^2\psi^{\epsilon}_m\|^{1+\frac d3}\|
    \psi^{\epsilon}\|^{1-\frac d3} \|\nabla\partial_j\psi_m\|^{\frac d3} \|\partial_j\psi_m\|^{1-\frac d3} \\
    &\leq C\|\nabla^2\psi^{\epsilon}_m\|^{1+\frac d3}\|
    \psi^{\epsilon}\|^{1-\frac d3} \left(\frac d3\|\nabla\partial_j\psi_m\| + \left(1-\frac d3\right)\|\partial_j\psi_m\|\right),
  \end{align*}
  and
  \begin{multline*}
    \|\nabla\psi^{\epsilon}_m\|_{L^4} \|\nabla\partial_j\psi^{\epsilon}_m\|_{L^4} \leq C \|\psi_m\|_{H^2}^{\frac 12}\|\psi_m\|_{\infty}^{\frac 12} \|\nabla^2\partial_j\psi_m\|^{\frac 12 + \frac d8} \|\partial_j\psi_m\|^{\frac 12 - \frac d8} \\
    \leq C \|\psi_m\|_{H^2}^{\frac 12}\|\psi_m\|_{\infty}^{\frac 12} \left(\left(\frac 12 + \frac d8\right)\|\nabla^2\partial_j\psi_m\| + \left(\frac 12 - \frac d8\right) \|\partial_j\psi_m\|\right).
  \end{multline*}
  We can construct
  \begin{align*}
    \epsilon\mathrm{d}_t\|\partial_j\psi_m\|_{H^2} \leq \frac{C_1}{\epsilon^2}\|\partial_jv_m\|_{H^2} + \frac{C_2}{\epsilon^4}\|\partial_j\psi_m\|_{H^2}.
  \end{align*}
  Then we get for all $t\in (0, T]$
  \begin{equation*}
    \|\partial_j\psi_m\|_{H^2} \leq \frac{C_1t}{\epsilon^3}\|\partial_jv_m\|_{H^2}\exp\left(\frac{C_2t}{\epsilon^4}\right) \leq C(t,\lambda,\epsilon, |\boldsymbol{\nu}|)\sqrt{\lambda_j}\|v_j\|_{H^2},
  \end{equation*}
  where $C(t,\lambda,\epsilon, |\boldsymbol{\nu}|)$ depends on $t,\lambda,\epsilon$ but is independent of dimensions.

  Then for $|\boldsymbol{\nu}| \geq 2$, by the Leibniz rule we have
  \begin{align*}
    &i\epsilon\partial_t\partial^{\boldsymbol{\nu}}\psi^{\epsilon}_m = -\frac{\epsilon^2}{2}\Delta(\partial^{\boldsymbol{\nu}}\psi^{\epsilon}_m) + \sum_{\boldsymbol{\mu}\preceq\boldsymbol{\nu}}\begin{pmatrix}
      \boldsymbol{\nu} \\
      \boldsymbol{\mu}
    \end{pmatrix}\partial^{\boldsymbol{\nu}-\boldsymbol{\mu}}v_m
    \partial^{\boldsymbol{\mu}}\psi^{\epsilon}_m +
    \lambda\sum_{\boldsymbol{\mu}\preceq\boldsymbol{\nu}}\begin{pmatrix}
      \boldsymbol{\nu} \\
      \boldsymbol{\mu}
    \end{pmatrix}\partial^{\boldsymbol{\nu}-\boldsymbol{\mu}}|\psi^{\epsilon}_m|^2
    \partial^{\boldsymbol{\mu}}\psi^{\epsilon}_m \\
    &= -\frac{\epsilon^2}{2}\Delta(\partial^{\boldsymbol{\nu}}\psi^{\epsilon}_m) + v_m\partial^{\boldsymbol{\nu}}\psi^{\epsilon}_m + \lambda(2|\psi^{\epsilon}_m|^2\partial^{\boldsymbol{\nu}}\psi^{\epsilon}_m + (\psi^{\epsilon}_m)^2\partial^{\boldsymbol{\nu}}\bar\psi^{\epsilon}_m) +\\
    &\sum_{\substack{\boldsymbol{\mu}\prec\boldsymbol{\nu},\\|\boldsymbol{\nu}-\boldsymbol{\mu}| = 1}}\begin{pmatrix}
      \boldsymbol{\nu} \\
      \boldsymbol{\mu}
    \end{pmatrix}\partial^{\boldsymbol{\nu}-\boldsymbol{\mu}}v_m
    \partial^{\boldsymbol{\mu}}\psi^{\epsilon}_m +
    \lambda\sum_{\boldsymbol{\mu}\prec\boldsymbol{\nu}}\begin{pmatrix}
      \boldsymbol{\nu} \\
      \boldsymbol{\mu}
    \end{pmatrix}\sum_{\boldsymbol{\eta}\preceq\boldsymbol{\nu}-\boldsymbol{\mu}}\begin{pmatrix}
      \boldsymbol{\nu}-\boldsymbol{\mu} \\
      \boldsymbol{\eta}
    \end{pmatrix}
    \partial^{\boldsymbol{\nu}-\boldsymbol{\mu}-\boldsymbol{\eta}}\psi^{\epsilon}_m
    \partial^{\boldsymbol{\mu}}\bar\psi^{\epsilon}_m\partial^{\boldsymbol{\eta}}\psi^{\epsilon}_m.
  \end{align*}
  Repeat the above procedures, and we get
  \begin{align*}
    &\epsilon\mathrm{d}_t\|\partial^{\boldsymbol{\nu}}\psi^{\epsilon}_m\| \leq 2|\boldsymbol{\nu}|\sum_{|\boldsymbol{\nu}-\boldsymbol{\mu}| = 1}\|\partial^{\boldsymbol{\nu}-\boldsymbol{\mu}}v_m\|_{\infty}
    \|\partial^{\boldsymbol{\mu}}\psi^{\epsilon}_m\| +
    2\lambda\|\psi^{\epsilon}\|_{\infty}^2\|\partial^{\boldsymbol{\nu}}\psi^{\epsilon}_m\| +
    \\ &\quad
    2\lambda \sum_{\boldsymbol{\mu}\prec\boldsymbol{\nu}}
    \begin{pmatrix}
      \boldsymbol{\nu} \\
      \boldsymbol{\mu}
    \end{pmatrix}
    \sum_{\boldsymbol{\eta}\preceq\boldsymbol{\nu}-\boldsymbol{\mu}}
    \begin{pmatrix}
      \boldsymbol{\nu}-\boldsymbol{\mu} \\
      \boldsymbol{\eta}
    \end{pmatrix}
    \|\partial^{\boldsymbol{\nu}-\boldsymbol{\mu}-\boldsymbol{\eta}}\psi^{\epsilon}_m\|_{L^6}
    \|\partial^{\boldsymbol{\mu}}\psi^{\epsilon}_m\|_{L^6}
    \|\partial^{\boldsymbol{\eta}}\psi^{\epsilon}_m\|_{L^6},
    \\
    &\epsilon\mathrm{d}_t\|\nabla\partial^{\boldsymbol{\nu}}\psi^{\epsilon}_m\| \leq 2\|\nabla v_m\|_{\infty}\|\partial^{\boldsymbol{\nu}}\psi_m\| +
    2\lambda C(\|\nabla\psi^{\epsilon}_m\|_{L^4} \|\partial^{\boldsymbol{\nu}}\psi^{\epsilon}_m\|_{L^4} +
    \|\nabla\partial^{\boldsymbol{\nu}}\psi^{\epsilon}_m\|)
    \\&\quad
    +2|\boldsymbol{\nu}|\sum_{|\boldsymbol{\nu}-\boldsymbol{\mu}| = 1} (\|\nabla\partial^{\boldsymbol{\nu}-\boldsymbol{\mu}}v_m\|_{\infty}
    \|\partial^{\boldsymbol{\mu}}\psi^{\epsilon}_m\| + \|\partial^{\boldsymbol{\nu}-\boldsymbol{\mu}}v_m\|_{\infty}
    \|\nabla\partial^{\boldsymbol{\mu}}\psi^{\epsilon}_m\|) +
    \\&\quad
    2\lambda \sum_{\boldsymbol{\mu}\prec\boldsymbol{\nu}}
    \begin{pmatrix}
      \boldsymbol{\nu} \\
      \boldsymbol{\mu}
    \end{pmatrix}
    \sum_{\boldsymbol{\eta}\preceq\boldsymbol{\nu}-\boldsymbol{\mu}}
    \begin{pmatrix}
      \boldsymbol{\nu}-\boldsymbol{\mu} \\
      \boldsymbol{\eta}
    \end{pmatrix}
    \Big[\|\nabla\partial^{\boldsymbol{\nu}-\boldsymbol{\mu}-\boldsymbol{\eta}}\psi^{\epsilon}_m\|_{L^6}
    \|\partial^{\boldsymbol{\mu}}\psi^{\epsilon}_m\|_{L^6}
    \|\partial^{\boldsymbol{\eta}}\psi^{\epsilon}_m\|_{L^6} +
    \\&\quad
    \|\partial^{\boldsymbol{\nu}-\boldsymbol{\mu}-\boldsymbol{\eta}}\psi^{\epsilon}_m\|_{L^6}
    \|\nabla\partial^{\boldsymbol{\mu}}\psi^{\epsilon}_m\|_{L^6}
    \|\partial^{\boldsymbol{\eta}}\psi^{\epsilon}_m\|_{L^6} +
    \|\partial^{\boldsymbol{\nu}-\boldsymbol{\mu}-\boldsymbol{\eta}}\psi^{\epsilon}_m\|_{L^6}
    \|\partial^{\boldsymbol{\mu}}\psi^{\epsilon}_m\|_{L^6}
    \|\nabla\partial^{\boldsymbol{\eta}}\psi^{\epsilon}_m\|_{L^6}\Big].
  \end{align*}
  and
  \begin{align*}
    &\epsilon\mathrm{d}_t\|\nabla^2\partial^{\boldsymbol{\nu}}\psi^{\epsilon}_m\| \leq 2(\|\nabla^2 v_m\|_{\infty}\|\partial^{\boldsymbol{\nu}}\psi_m\| + \|\nabla v_m\|_{\infty}\|\nabla\partial^{\boldsymbol{\nu}}\psi_m\|)+
    8\lambda \|\nabla\psi^{\epsilon}_m\|_{L^6}^2 \|\partial^{\boldsymbol{\nu}}\psi_m\|_{L^6}
    \\&
    + 8\lambda\|\psi^{\epsilon}_m\|_{\infty} \|\nabla^2\psi^{\epsilon}_m\|_{L^4} \|\partial^{\boldsymbol{\nu}}\psi_m\|_{L^4} + 16\lambda \|\psi^{\epsilon}_m\|_{\infty} \|\nabla\psi^{\epsilon}_m\|_{L^4} \|\nabla\partial^{\boldsymbol{\nu}}\psi^{\epsilon}_m\|_{L^4} +
    \\&
    2|\boldsymbol{\nu}|\sum_{|\boldsymbol{\nu}-\boldsymbol{\mu}| = 1}
    \Big[
    \|\nabla^2\partial^{\boldsymbol{\nu}-\boldsymbol{\mu}}v_m\|_{\infty}
    \|\partial^{\boldsymbol{\mu}}\psi^{\epsilon}_m\| + 2\|\nabla\partial^{\boldsymbol{\nu}-\boldsymbol{\mu}}v_m\|_{\infty}
    \|\nabla\partial^{\boldsymbol{\mu}}\psi^{\epsilon}_m\| +
    \\&
    \|\partial^{\boldsymbol{\nu}-\boldsymbol{\mu}}v_m\|_{\infty}
    \|\nabla^2\partial^{\boldsymbol{\mu}}\psi^{\epsilon}_m\|
    \Big]  +
    2\lambda\|\psi^{\epsilon}_m\|_{\infty}^2 \|\nabla^2\partial^{\boldsymbol{\nu}}\psi_m\| +
    \\&
    6\lambda C \sum_{\boldsymbol{\mu}\prec\boldsymbol{\nu}}
    \begin{pmatrix}
      \boldsymbol{\nu} \\
      \boldsymbol{\mu}
    \end{pmatrix}
    \sum_{\boldsymbol{\eta}\preceq\boldsymbol{\nu}-\boldsymbol{\mu}}
    \begin{pmatrix}
      \boldsymbol{\nu}-\boldsymbol{\mu} \\
      \boldsymbol{\eta}
    \end{pmatrix}
    \|\partial^{\boldsymbol{\nu}-\boldsymbol{\mu}-\boldsymbol{\eta}}\psi^{\epsilon}_m\|_{H^2}
    \|\partial^{\boldsymbol{\mu}}\psi^{\epsilon}_m\|_{H^2}
    \|\partial^{\boldsymbol{\eta}}\psi^{\epsilon}_m\|_{H^2},
  \end{align*}
  in which we use the inequality generalized from Proposition 3.6 in~\cite{taylor2010partial} as
  \begin{gather*}
    \|\nabla^2 f g h\| \leq C\|f\|_{H^2}\|g\|_{H^2}\|h\|_{H^2},\\
    \|(\nabla f) (\nabla g) h\| \leq C\|f\|_{H^2}\|g\|_{H^2}\|h\|_{H^2}.
  \end{gather*}
  Thus we get
  \begin{multline*}
    \epsilon\mathrm{d}_t\|\partial^{\boldsymbol{\nu}}\psi^{\epsilon}_m\|_{H^2} \leq C_3\|\partial^{\boldsymbol{\nu}}\psi^{\epsilon}_m\|_{H^2} + C_4|\boldsymbol{\nu}|\sum_{|\boldsymbol{\nu}-\boldsymbol{\mu}| = 1}
    \|\partial^{\boldsymbol{\nu}-\boldsymbol{\mu}}v_m\|_{H^2}
    \|\partial^{\boldsymbol{\mu}}\psi^{\epsilon}_m\|_{H^2} +
    \\
    \lambda C_5 \sum_{\boldsymbol{\mu}\prec\boldsymbol{\nu}}
    \begin{pmatrix}
      \boldsymbol{\nu} \\
      \boldsymbol{\mu}
    \end{pmatrix}
    \sum_{\boldsymbol{\eta}\preceq\boldsymbol{\nu}-\boldsymbol{\mu}}
    \begin{pmatrix}
      \boldsymbol{\nu}-\boldsymbol{\mu} \\
      \boldsymbol{\eta}
    \end{pmatrix}
    \|\partial^{\boldsymbol{\nu}-\boldsymbol{\mu}-\boldsymbol{\eta}}\psi^{\epsilon}_m\|_{H^2}
    \|\partial^{\boldsymbol{\mu}}\psi^{\epsilon}_m\|_{H^2}
    \|\partial^{\boldsymbol{\eta}}\psi^{\epsilon}_m\|_{H^2}.
  \end{multline*}
  An application of the Gronwall inequality yields
  \begin{multline*}
    \|\partial^{\boldsymbol{\nu}}\psi^{\epsilon}_m\|_{H^2}\leq \exp\left(\frac{C_3T}{\epsilon}\right)
    \Big\{
    \frac{C_4 T |\boldsymbol{\nu}|}{\epsilon} \sum_{|\boldsymbol{\nu}-\boldsymbol{\mu}| = 1}
    \|\partial^{\boldsymbol{\nu}-\boldsymbol{\mu}}v_m\|_{H^2}
    \|\partial^{\boldsymbol{\mu}}\psi^{\epsilon}_m\|_{H^2} +
    \\
    \frac{\lambda C_5T}{\epsilon} \sum_{\boldsymbol{\mu}\prec\boldsymbol{\nu}}
    \begin{pmatrix}
      \boldsymbol{\nu} \\
      \boldsymbol{\mu}
    \end{pmatrix}
    \sum_{\boldsymbol{\eta}\preceq\boldsymbol{\nu}-\boldsymbol{\mu}}
    \begin{pmatrix}
      \boldsymbol{\nu}-\boldsymbol{\mu} \\
      \boldsymbol{\eta}
    \end{pmatrix}
    \|\partial^{\boldsymbol{\nu}-\boldsymbol{\mu}-\boldsymbol{\eta}}\psi^{\epsilon}_m\|_{H^2}
    \|\partial^{\boldsymbol{\mu}}\psi^{\epsilon}_m\|_{H^2}
    \|\partial^{\boldsymbol{\eta}}\psi^{\epsilon}_m\|_{H^2}\Big\}.
  \end{multline*}
  Use the induction argument and we get
  \begin{equation*}
    \|\partial^{\boldsymbol{\nu}}\psi_m\|_{H^2} \leq C(t,\lambda,\epsilon, |\boldsymbol{\nu}|)\prod_{j}(\sqrt{\lambda_j}\|v_j\|_{H^2})^{\nu_j}.
  \end{equation*}
\end{proof}

\bibliographystyle{amsplain}
\bibliography{references}
\end{document}